\numberwithin{equation}{section}
\newtheorem{theoremINTRO}{Theorem}
\newtheorem{propositionINTRO}[theoremINTRO]{Proposition}
\newtheorem{corollaryINTRO}[theoremINTRO]{Corollary}
\theoremstyle{definition}
\newtheorem{definitionINTRO}[theoremINTRO]{Definition}
\newtheorem{remarkINTRO}[theoremINTRO]{Remark}
\theoremstyle{theorem}
\newtheorem{theorem}{Theorem}[section]
\newtheorem{lemma}[theorem]{Lemma}
\newtheorem{proposition}[theorem]{Proposition}
\newtheorem{corollary}[theorem]{Corollary}
\theoremstyle{definition}
\newtheorem{remark}[theorem]{Remark}
\def\les{\lesssim}
\newcommand{\bks}{\backslash}
\newcommand{\R}{\mathbf{R}}
\newcommand{\Z}{\mathbf{Z}}
\newcommand{\T}{\mathbf{T}}
\newcommand{\hel} {
\hskip2.5pt{\vrule height7pt width.5pt depth0pt}
\hskip-.2pt\vbox{\hrule height.5pt width7pt depth0pt}
\, }
\newcommand{\restr}{\hel}
\newcommand{\mc}{\mathcal}
\newcommand{\sm}{\backslash}
\newcommand{\pt}{\partial}
\newcommand{\vhi}{\varphi}
\newcommand{\eps}{\varepsilon}
\newcommand{\te}{\theta}
\newcommand{\oo}{\infty}
\newcommand{\nb}{\nabla}
\newcommand{\ov}{\overline}
\newcommand{\h}{\mathcal{H}}
\newcommand{\dw}{\downarrow}
\newcommand{\up}{\uparrow}
\newcommand{\longto}{\longrightarrow}
\newcommand{\Id}{\text{Id}}
\newcommand\un{\bs{1}}
\newcommand{\Om}{\Omega}
\newcommand{\om}{\omega}
\newcommand{\ds}{\displaystyle}
\newcommand{\be}{\begin{equation}}
\newcommand{\ee}{\end{equation}}
\newcommand{\U}{S}
\def\XXint#1#2#3{{\setbox0=\hbox{$#1{#2#3}{\int}$} 
      \vcenter{\hbox{$#2#3$}}\kern-.5\wd0}}
\newcommand{\mint}{\diagup \!\!\!\!\!\!\!\ds\int}
\newcommand{\smint}{\mbox{{\scriptsize$\diagup$}}\hspace{-9.2pt}\int}
\def\lt{\left}
\def\rt{\right}
\def\bs{\boldsymbol}
\def\E{\mathcal{E}}
\def\F{\mathcal{E}}
\def\G{\mathcal{G}}
\DeclareMathOperator{\Lip}{Lip}
\DeclareMathOperator{\SO}{SO}
\DeclareMathOperator{\GL}{GL}
\DeclareMathOperator{\supp}{supp}
\DeclareMathOperator{\osc}{osc}
\DeclareMathOperator{\Span}{span}
\title{Non-convex functionals penalizing {simultaneous} oscillations along  independent directions: rigidity estimates}
\author{M. Goldman\footnote{ Universit\'e Paris-Diderot, Sorbonne Paris-Cit\'e, Sorbonne Universit\'e,  CNRS,  Laboratoire Jacques-Louis Lions, LJLL, F-75013 Paris, email: goldman@math.univ-paris-diderot.fr} \and B. Merlet\footnote{Univ. Lille, CNRS, UMR 8524, Inria - Laboratoire Paul Painlev\'e, F-59000 Lille, email: benoit.merlet@univ-lille.fr}}
\begin{document}
\maketitle

\begin{abstract}
We study a family of non-convex functionals $\{\F\}$ on the space of measurable functions $u:\Om_1\times\Om_2\subset\R^{n_1}\times\R^{n_2}\to\R$.
These functionals vanish on the non-convex subset $\U(\Om_1\times\Om_2)$ formed by  functions of the form $u(x_1,x_2)=u_1(x_1)$ or $u(x_1,x_2)=u_2(x_2)$. We investigate under which conditions the { converse} implication ``$\F(u)=0 \Rightarrow u\in \U(\Om_1\times\Om_2)$'' holds.
In particular, we show that the answer depends strongly on the smoothness of $u$. We also obtain quantitative versions of this implication by proving  
 that (at least for some parameters) $\F(u)$ controls in a strong sense the distance of $u$ to  $\U(\Om_1\times\Om_2)$.
\end{abstract}
%\tableofcontents

 \section{Introduction}\label{SIntro}
Given two bounded non-empty connected {and}  open  sets $\Om_1\subset \R^{n_1}$, $\Om_2\subset\R^{n_2}$, we consider the \textit{non-convex} set $\U(\Om)$ of measurable functions $u$ defined on $\Om=\Om_1\times\Om_2\subset\R^{n_1+n_2}$ which only depend on the first $n_1$ coordinates or on the last $n_2$ coordinates,
that is $u(x)=u_1(x_1)$ or $u(x)=u_2(x_2)$. We present and study a family of non-convex  functionals that detect whether a measurable function $u$ defined on $\Om$ belongs to $\U(\Om)$. As a first guess, we could expect that for $u$ with Sobolev regularity the relation 
\be\label{ProdSobolev}
|\nb_1u|\,|\nb_2 u|\equiv0\ \text{ a.e. in }\Om
\ee  would imply $u\in \U(\Om)$. As we will see later on, this is not the case even for $u\in Lip(\Om)$ and therefore \eqref{ProdSobolev} cannot be the starting point  for defining our functionals. However, our construction relies on a discrete version of~\eqref{ProdSobolev}.  If $u\in \U(\Om)$ then at any point $x=(x_1,x_2)\in \Om$, and any $z=(z_1,z_2)\in\R^{n_1+n_2}$ { with $|z|$} small enough, the product  
\be\label{prod}
|u(x_1+z_1,x_2)-u(x)|\, |u(x_1,x_2+z_2)-u(x)|
\ee
is well defined and vanishes. The functionals that we consider are based on the integration of~\eqref{prod} in $x$ and $z$ with a weight depending on $z$ and a limiting process that localizes the integration around $z=0$.
 
\subsection{Definitions and context}
Let us introduce some notation.  For convenience, we write the Cartesian product as a sum: we decompose the $n$-dimensional Euclidean space $X=\R^n$ as,
\[
\R^n=X_1\oplus X_2,\quad\text{with}\quad n_l:=\dim X_l\geq 1 \text{ for }l=1,2.
\]
We will assume for simplicity { that} $X_1\perp X_2$. The domain $\Om\subset\R^n$ is $\Om:=\Om_1+\Om_2$ where, for $l=1,2$, $\Om_l$ is a non-empty bounded domain (i.e. open and connected) of the subspace $X_l$. We note $L(U)$ the space of Lebesgue-measurable functions defined on a measurable set $U$ and for $x\in \R^n$, we note $x=x_1+x_2$ its decomposition in $X_1+X_2$. With this notation we define the set
\[
\U(\Om):=\{u\in L(\Om) : u(x)=u_l(x_l) \text{ in }\Om
\text{ with }l=1 \text{ or } l=2 \text{ and } u_l\in L(\Om_l)\}.
\]
(This set depends on $\Om_1$, $\Om_2$ but as no  ambiguity arises we choose this short notation.)

%\begin{multline*}
%\U(\Om):=\\
%	\lt\{ u:\Om\to\R\text{ such that }u(x)=u_1(x_1) \text{ in }\Om 
%	\text{ for some measurable function } u_1:\Om_1\to\R\rt\}\\
%	\bigcup 
%	\lt\{ u:\Om\to\R\text{ such that }u(x)=u_2(x_2) \text{ in }\Om 
%	\text{ for some measurable function }u_2:\Om_2\to\R \rt\}
%\end{multline*}
% and more generally, we use these functionals to measure the distance of $u$ to the set $\U(\Om)$. 
\noindent
We also fix a radial non-negative kernel 
\[
\rho\in L^1(\R^n,\R_+)\quad \text{ with }\int \rho=1,\text{ and }\ \supp\rho\subset B_1.
\]
As usual, for $\eps>0$ we introduce the rescaled kernel $\rho_\eps:=\eps^{-n}\rho(\eps^{-1}\cdot)$ so that $\{\rho_\eps\}_{\eps>0}$ forms a family of radial mollifiers. We introduce three real parameters $p$, $\te_1$, $\te_2>0$ and define for any measurable function $u:\Om\to \R$ and any $\eps>0$, the quantity
\be\label{Eeps}
\E_{\eps,p}^{\te_1,\te_2}(u;\Om)=\E_{\eps,p}^{\te_1,\te_2}(u):=\int_{\R^n}\rho_\eps(z) \int_{\Omega^\eps} \dfrac{|u(x+z_1)-u(x)|^{\te_1} \, |u(x+z_2)-u(x)|^{\te_2}}{|z|^p} \,dx \,dz,
\ee
with
\[
\Om^\eps:=\Om^\eps_1+\Om^\eps_2, \qquad\text{and } \qquad \Om^\eps_j:=\!\!\!\!\bigcap\limits_{z_l\in X_l,|z_l|<\eps} (\Om_l-z_l)\quad \text{ for }l=1,2.
\]
Eventually, we send $\eps$ to $0$ and define the functional
 \[
\F_{p}^{\te_1,\te_2}(u;\Om)=\F_{p}^{\te_1,\te_2}(u):=\liminf_{\eps\dw 0} \E_{\eps,p}^{\te_1,\te_2}(u).
 \]
Most of the time, we omit  the dependency on the parameters $\te_1,\te_2$ and note 
\[
\E_{\eps,p}(u):=\E_{\eps,p}^{\te_1,\te_2}(u),\quad \F_{p}(u):=\F_{p}^{\te_1,\te_2}(u).
\]
We are interested in  the qualitative and quantitative properties of functions with  finite energy $\F_p(u)$. We first observe that $u$, $\te_1$ and $\te_2$ being fixed, there exists at most one value of $p$ for which $0<\F_{p}(u)<\oo$. 
\begin{remarkINTRO}\label{remark1}
From the properties of the kernel $\rho$, we have for $p<q$, 
\[
\E_{\eps,p}(u)\leq \eps^{q-p} \,\E_{\eps,q}(u).
\]
Sending $\eps$ to $0$, we see that $
[\F_{p}(u)>0] \Rightarrow [\F_{q}(u)=\oo]$ and we deduce that there exists $ p^*(u)\in[0,\infty]$ such that 
\[
\F_{p}(u)= \begin{cases}
 0 & \text{ for $p<p^*(u)$,}\\
\infty & \text{ for $p>p^*(u)$.}
\end{cases}
\] 
\end{remarkINTRO}

By construction, $\E_{\eps,p}(u)=\F_{p}(u)=0$ for every $u\in \U(\Om)$.
% We are interested in some converse properties, more precisely, we study the qualitative and quantitative properties of measurable functions $u$ such that 
% \[
% \F( u):= \F_{p}(u):=\liminf_{\eps\dw 0} \E_{\eps,p}(u)\, < \oo. 
% \]
A first natural question  is: \be\label{question}
\text{``Does $ \F_{p}(u)=0$ implies $u\in \U(\Om)$?''}
\ee
We will see that the answer is ``yes'' for $p$ large enough depending on $\te_1,\te_2$.  This question initially appeared (with $u$ being a characteristic function
and $n_2=1$)  in the study of  pattern formation in some variational models involving competition between a local attractive term and a non-local repulsive one. 
Indeed, in~\cite{GRun,DanRun}), energies related to $\F_{p}$ are used to show that some sets $S \subset \R^n$ are union of  stripes. The functionals $\F_{p}$ extend this setting to general functions and to general dimensions $(n_1, n_2)$.\medskip

Our second main result  may be seen as an answer to a  quantitative version of Question~\eqref{question}. Indeed, we prove (at least for some values of $\theta_1$ and $\theta_2$), that the non-convex energy $\F_{p}(u)$ controls the
distance from $u$ to  the non-convex set $\U(\Om)$ in a strong norm. Of course, as seen from Remark~\ref{remark1}, this is an interesting question only for the borderline exponent $p$ for Question~\eqref{question}. \medskip
\begin{remarkINTRO}\label{remarkLinfty}
Let us point out that when investigating Question~\eqref{question}, there is no loss of generality in assuming that $u\in L^\infty$. Indeed, $\arctan u\in \U(\Om)$ if and only if $u\in \U(\Om)$ and $\F_p(\arctan u)\le \F_p(u)$.
\end{remarkINTRO}
\begin{remarkINTRO}
Notice that the functionals $ \F_{p}^{\te_1,\te_2}$ can be seen as variants  of the non-local functionals used by Brezis \textit{et al} to characterize 
Sobolev spaces~\cite{BBM2001,Brezis2002,MMS}. It turns out that the present \emph{non-convex} setting is rather different but we do use their results in our analyses at some point: when $u(x)$ splits as $u_1(x_1)+u_2(x_2)$.
\end{remarkINTRO}

\subsection{The vanishing energy case: Question~\eqref{question}}
To get some insight into the behavior of the functional $\F_{p}$, let us first consider the simple situation $u\in C^1(\ov \Om)$. Within this setting, the problem is rigid as soon as $p\ge \theta_1+\theta_2$.
\begin{propositionINTRO}[Proposition~\ref{prop:C1}]\label{C1:intro} Let us note $\te:=\te_1+\te_2$. 

\begin{itemize}
 \item[(i)] For every $u\in \Lip(\Om)$, $\F_{\te}(u)<\oo$ (and therefore $\F_{p}(u)=0$ for $p<\te$);
 \item[(ii)] If $u\in C^1(\ov {\Om})$, then\ \ $[\F_{\te}(u)=0] \Longrightarrow u\in \U(\Om)$. 
\end{itemize}
\end{propositionINTRO}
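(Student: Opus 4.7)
\medskip
\noindent
\textbf{Proof plan.}

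For part~(i), I would use a direct Lipschitz estimate. If $u\in\Lip(\Om)$ with Lipschitz constant $L$, then $|u(x+z_l)-u(x)|\le L|z_l|\le L|z|$ for $l=1,2$, and hence the integrand in the definition of $\E_{\eps,\te}(u)$ is bounded pointwise by $L^{\te_1}L^{\te_2}|z|^{\te_1+\te_2}/|z|^\te=L^\te$. Since $\int_{\R^n}\rho_\eps=1$ and $|\Om^\eps|\le|\Om|$, Fubini gives $\E_{\eps,\te}(u)\le L^\te|\Om|$ uniformly in $\eps$; taking a $\liminf$ yields $\F_\te(u)\le L^\te|\Om|<\infty$, and Remark~\ref{remark1} produces $\F_p(u)=0$ for $p<\te$.

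For part~(ii), the plan is to identify the $\eps\to 0$ limit of $\E_{\eps,\te}(u)$ as a local integral in $\nabla u$. Substituting $z=\eps w$ and using uniform continuity of $\nabla u$ on $\ov\Om$, I would write
\[
\frac{u(x+\eps w_l)-u(x)}{\eps}=\nabla_l u(x)\cd w_l+o(1),\qquad l=1,2,
\]
uniformly for $x\in\ov\Om^\eps$ and $w\in B_1$. The rescaled integrand is dominated by $(\|\nabla u\|_\infty+1)^\te$ on $\Om\times B_1$ and converges pointwise, so by dominated convergence
\[
\F_\te(u)=\int_\Om\Psi(\nabla_1 u(x),\nabla_2 u(x))\,dx,\qquad \Psi(v_1,v_2):=\int_{\R^n}\rho(w)\,\frac{|v_1\cd w_1|^{\te_1}|v_2\cd w_2|^{\te_2}}{|w|^\te}\,dw.
\]
Since $\rho\ge 0$ is radial with $\int\rho=1$, the set $\{\rho>0\}\subset B_1$ has positive Lebesgue measure. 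If $v_1\ne 0$ and $v_2\ne 0$, the integrand of $\Psi$ vanishes only on the union of the two hyperplanes $\{w_1\perp v_1\}\cup\{w_2\perp v_2\}$, which is a null set, so $\Psi(v_1,v_2)>0$. Thus $\F_\te(u)=0$ forces $|\nabla_1 u(x)|\,|\nabla_2 u(x)|=0$ a.e., and by continuity of $\nabla u$ this identity holds pointwise on $\ov\Om$.

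The final step, which I expect to be the main obstacle, is to upgrade the pointwise dichotomy ``$\nabla_1 u(x)=0$ or $\nabla_2 u(x)=0$'' to the global statement ``$\nabla_1 u\equiv 0$ on $\Om$ or $\nabla_2 u\equiv 0$ on $\Om$'', which places $u$ in $\U(\Om)$. The closed sets $F:=\{\nabla_1 u=0\}$ and $G:=\{\nabla_2 u=0\}$ cover $\ov\Om$, and I would show that one of them is all of $\ov\Om$ by exploiting the product structure and connectedness of $\Om_1,\Om_2$. Concretely, assuming $\nabla_2 u\not\equiv 0$ (otherwise $u\in\U$ trivially), fix $q\in\Om$ with $\nabla_2 u(q)\ne 0$ and a product neighbourhood $V_1\times V_2\ni q$ on which $\nabla_1 u\equiv 0$, so that $u(x_1,x_2)=u(q_1,x_2)$ there. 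Introduce
\[
E:=\{x_2\in\Om_2\ :\ u(\cd,x_2)\text{ is constant on }\Om_1\},
\]
which is non-empty ($V_2\subset E$) and closed by continuity. The delicate point is openness of $E$: given $x_2^0\in E$, one must propagate the ``$x_1$-independence'' at $x_2^0$ to nearby $x_2$, ruling out ``patchwork'' phenomena where $u$ switches from a function of $x_1$ to a function of $x_2$ across a level set. I would run this propagation by picking any $x_1^0\in\Om_1$ and considering $\phi(x_1):=u(x_1,x_2)-u(x_1^0,x_2)$ for $x_2$ close to $x_2^0$: using connectedness of $\Om_1$ and the local alternatives $\nabla_1 u=0$ or $\nabla_2 u=0$ (together with continuity of $u$ forcing matching values across open cells of the dichotomy), one deduces $\phi\equiv 0$, hence $x_2\in E$. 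This gives $E=\Om_2$, i.e., $u=u_2(x_2)\in\U(\Om)$. The book-keeping of how the two families of ``cells'' $\{\nabla_1 u\ne 0\}$ and $\{\nabla_2 u\ne 0\}$ meet along the common boundary $F\cap G=\{\nabla u=0\}$ is the subtle part and likely requires a separate connectedness lemma in the product domain.
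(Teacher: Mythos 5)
Your part~(i) and the first half of part~(ii) — the rescaling $z=\eps w$, the dominated-convergence limit identifying $\F_\te(u)=\int_\Om\Psi(\nabla_1 u,\nabla_2 u)$, and the observation that $\Psi(v_1,v_2)>0$ when both $v_l\neq 0$, yielding the pointwise dichotomy $|\nabla_1 u|\,|\nabla_2 u|\equiv 0$ — match the paper's argument exactly and are fine.

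The rigidity step, however, is not actually proved in your proposal; you flag it as ``the main obstacle'' and sketch a propagation that, as written, does not close. Two concrete issues. First, your claim $V_2\subset E$ is unjustified: $\nabla_1 u\equiv 0$ on $V_1\times V_2$ only makes $u(\cdot,x_2)$ constant on $V_1$, not on all of $\Om_1$, so nonemptiness of $E$ already needs the very propagation you defer. Second, the openness argument for $E$ via $\phi(x_1)=u(x_1,x_2)-u(x_1^0,x_2)$ and ``continuity across cells'' is not an argument: nothing in the local dichotomy prevents $\nabla_1 u(\cdot,x_2^0)\equiv 0$ from being compatible with $\nabla_1 u(x_1,x_2)\neq 0$ for $x_2$ arbitrarily close to $x_2^0$; ruling that out is precisely a (nontrivial) propagation lemma, which you would still have to prove.

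The paper's version of this lemma is cleaner and worth internalizing. Fix $x^0$ with $w_1:=\nabla_1 u(x^0)\neq 0$ and consider the connected component $C_2$ of $\{y_2\in\Om_2: \nabla_1 u(x_1^0+y_2)=w_1\}$ through $x_2^0$. It is closed by continuity. For openness: if $y_2\in C_2$, then $\nabla_1 u\neq 0$ on a product neighbourhood $B^{X_1}_\eps(x_1^0)+B^{X_2}_\eps(y_2)$, so the dichotomy forces $\nabla_2 u\equiv 0$ there; hence $\nabla_1 u(x_1^0+\cdot)$ is \emph{constant} on $B^{X_2}_\eps(y_2)$, and the level set is open. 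Connectedness gives $C_2=\Om_2$, i.e.\ $\nabla_1 u\neq 0$ along the whole fibre $\{x_1^0\}+\Om_2$. Symmetrically, $\nabla_2 u(\tilde x^0)\neq 0$ would force $\nabla_2 u\neq 0$ along $\Om_1+\{\tilde x_2^0\}$, and evaluating both at the crossing point $x_1^0+\tilde x_2^0$ contradicts the dichotomy. The key observation you were missing is that the dichotomy, applied on a product neighbourhood of a point where $\nabla_1 u\neq 0$, makes $\nabla_1 u$ locally independent of $x_2$ — this is what turns local information into global information along a fibre. Your $E$-based scheme can be made to work, but only after you prove this same propagation lemma; as it stands the proposal has a gap at exactly the point you identify.
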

To obtain the point~(i) we simply plug the inequality $|u(x+z_l)-u(x)|\leq \|\nb u\|_\oo |z_l|$ in the definition of $\E_{\eps,\te}$. This first point shows that the parameter $p=\te$ is sharp in~(ii). The proof of~(ii) runs as follows. Using the relation $u(x+z_l)-u(x)=\nb u(x)\cdot z_l +o(|z|)$ in the definition of $\E_{\eps,\te}(u)$, we obtain that $\F_{\te}(u)=0$ implies that $\nb u$ satisfies the differential inclusion (equivalent to~\eqref{ProdSobolev})
\begin{equation}\label{diffinclu}
\nb u(x)\in X_1\cup X_2\quad\text{almost everywhere in } \Om.
\end{equation}
This differential inclusion is rigid for $u\in C^1(\Om)$: it yields $u\in \U(\Om)$. On the contrary,  since the convex hull of $X_1\cup X_2$ is $\R^n$, the differential inclusion~\eqref{diffinclu} is not rigid in the class of Lipschitz continuous functions.
Indeed, the set of Lipschitz functions satisfying~\eqref{diffinclu} is dense in $W^{1,1}(\Om)$ (see~\cite[Theorem 10.18]{Dac}). As a consequence we cannot substitute Lipschitz continuity to the $C^1$-regularity assumption in Proposition~\ref{C1:intro}~(ii). More precisely, 
at least in the range $\min(\te_1,\te_2)\leq1$, when we pass from $C^1$-regularity to Lipschitz continuity, the threshold jumps from $p=\te$ to $p=1+\te$ as shown by the two following propositions.
The first one follows from Theorem~\ref{theointro:zero} (with $\te_1,\te_2$ in the range (b) of~\eqref{Pintro} below).
\begin{propositionINTRO}[Proposition~\ref{propLip}] \label{Lip:intro} 
Assume $\min(\te_1,\te_2)\leq 1$ then for $u\in Lip(\Om)$,  $[\F_{1+\te}(u)=0] \Longrightarrow u\in \U(\Om)$. 
\end{propositionINTRO}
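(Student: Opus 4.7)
I break the proof into two stages: first extract a pointwise differential inclusion from $\F_{1+\te}(u)<\oo$, then upgrade it to $u\in\U(\Om)$ using the sharper vanishing $\F_{1+\te}(u)=0$.

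\emph{Stage 1 (differential inclusion).} I rescale $z=\eps\zeta$ in $\E_{\eps,1+\te}(u)$. Using $\rho_\eps(z)\,dz=\rho(\zeta)\,d\zeta$ and $|z|^{1+\te}=\eps^{1+\te}|\zeta|^{1+\te}$, we get
\[
\eps\,\E_{\eps,1+\te}(u)=\int_{\R^n}\rho(\zeta)\int_{\Om^\eps}\left|\frac{u(x+\eps\zeta_1)-u(x)}{\eps}\right|^{\te_1}\!\left|\frac{u(x+\eps\zeta_2)-u(x)}{\eps}\right|^{\te_2}\frac{dx\,d\zeta}{|\zeta|^{1+\te}}.
\]
By Rademacher's theorem, the integrand converges a.e.\ to $\rho(\zeta)|\nb u(x)\cd\zeta_1|^{\te_1}|\nb u(x)\cd\zeta_2|^{\te_2}/|\zeta|^{1+\te}$ and is dominated by $\Lip(u)^\te\rho(\zeta)/|\zeta|$, which is integrable on $\R^n\times\Om$ since $n\ge 2$. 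Dominated convergence gives $\lim_{\eps\dw 0}\eps\,\E_{\eps,1+\te}(u)=A(u)$. Since $\F_{1+\te}(u)=\liminf_{\eps\dw0}\E_{\eps,1+\te}(u)=0$, we must have $A(u)=0$. As the zero set of $\zeta\mapsto|v\cd\zeta_1|^{\te_1}|v\cd\zeta_2|^{\te_2}$ is a union of two codimension-one hyperplanes in $\R^n$, the condition $A(u)=0$ forces the pointwise differential inclusion $\nb u(x)\in X_1\cup X_2$ for a.e.\ $x\in\Om$.

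\emph{Stage 2 (rigidity).} The inclusion is not rigid in $\Lip(\Om)$ (e.g.\ $u(x_1,x_2)=\max(|x_1|,|x_2|)$ satisfies it without being in $\U$), so I must exploit $\F_{1+\te}(u)=0$ more sharply than just through the divergent scaling factor. On any open ball where $\nb u\in X_l$ strictly, $u$ depends only on $x_l$ locally, and for $\eps$ small enough the factor $|u(x+z_{\bar l})-u(x)|$ vanishes identically on that ball; so the entire energy concentrates in an $\eps$-neighborhood of the interface between the regions $\{\nb u\in X_1\setminus\{0\}\}$ and $\{\nb u\in X_2\setminus\{0\}\}$. The hypothesis $\min(\te_1,\te_2)\le 1$ then allows me to estimate the factor with exponent $\le 1$ via the Lipschitz constant, reducing the contribution near the interface to a one-dimensional Bourgain-Brezis-Mironescu-type lower bound (cf.\ \cite{BBM2001,Brezis2002,MMS}) applied on slices to the remaining factor; any non-degenerate interface then produces a strictly positive energy in the limit, contradicting $\F_{1+\te}(u)=0$. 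Hence the interface is empty, only one of the two regions can have positive measure, and $u\in\U(\Om)$.

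\emph{Main obstacle.} Stage 1 is a routine rescaling plus Rademacher + dominated convergence argument. The heart of the proof is Stage 2: turning the heuristic "interfaces carry positive energy density" into a quantitative lower bound compatible with the liminf structure of $\F_{1+\te}$. The assumption $\min(\te_1,\te_2)\le 1$ is what makes this possible here, since it brings one factor under a Lipschitz bound and reduces the slice estimate to the classical BBM characterization of constants; a genuinely different mechanism is needed in the range $\min(\te_1,\te_2)>1$.
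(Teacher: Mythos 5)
Your Stage~1 is fine: the rescaling and Rademacher/dominated-convergence argument re-derives the differential inclusion $\nb u(x)\in X_1\cup X_2$ a.e., which is essentially the paper's Proposition~\ref{prop:C1}(ii) with Rademacher replacing $C^1$-continuity. But the paper itself emphasises (right after Proposition~\ref{C1:intro}) that this inclusion is \emph{not} rigid in $\Lip(\Om)$, because its convex hull is all of $\R^n$; so Stage~1 on its own cannot conclude, which you acknowledge.

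Stage~2 is where the genuine gap is. The two sets $\{\nb u\in X_1\setminus\{0\}\}$ and $\{\nb u\in X_2\setminus\{0\}\}$ are only measurable, and there is no reason any open ball should lie entirely in one of them: the density result cited in the paper (\cite[Theorem~10.18]{Dac}) produces Lipschitz solutions of the inclusion that oscillate between the two gradient cones on arbitrarily fine scales, so ``the interface'' need not be a hypersurface, and every ball may meet both regions. Your sentence ``any non-degenerate interface then produces a strictly positive energy in the limit'' is precisely the statement to be proved, but no argument is given for it; nor is it explained what a ``non-degenerate interface'' is in this setting, nor how the one-dimensional BBM-type lower bound on slices would ignore the oscillating geometry. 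As written, Stage~2 begs the question.

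The paper's actual route is entirely different and much shorter. When $\te\ge 1$ and $\min(\te_1,\te_2)\le 1$, the exponent $1+\te$ is already the critical one $P(\te_1,\te_2)$, so Theorem~\ref{theo:zero} applies directly and no Lipschitz regularity is needed at all. When $\te<1$ (say $\te_1\le1$), one \emph{raises} the exponent $\te_1$ to $1$ using the Lipschitz bound: from $|Du(x,z_1)|\le\|\nb_1 u\|_\oo|z_1|$ one gets
\[
\frac{|Du(x,z_1)|\,|Du(x,z_2)|^{\te_2}}{|z|^{2+\te_2}}\le \|\nb_1 u\|_\oo^{1-\te_1}\,\frac{|Du(x,z_1)|^{\te_1}|Du(x,z_2)|^{\te_2}}{|z|^{1+\te}},
\]
hence $\F^{1,\te_2}_{2+\te_2}(u)\le\|\nb_1 u\|_\oo^{1-\te_1}\F^{\te_1,\te_2}_{1+\te}(u)=0$, and since $2+\te_2=P(1,\te_2)$, Theorem~\ref{theo:zero} (applied with $(1,\te_2)$ in place of $(\te_1,\te_2)$) gives $u\in\U(\Om)$. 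The heavy lifting is entirely in Theorem~\ref{theo:zero}, which is proved by controlling the second-order difference quotients and the distribution $\nb_1\nb_2 u$ (Lemmas~\ref{lem:goodzk} and~\ref{lem:measure}), not by analysing interfaces. If you want to salvage your plan, you would essentially have to rebuild that machinery.
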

%The proposition is a particular case of Theorem~\ref{theointro:zero} below in the range of parameters (b) of~\eqref{Pintro}. The value $p=1+\te$ is optimal as shown by Proposition~\ref{remintro:optimal} (1)
\begin{propositionINTRO}[Proposition~\ref{rem:optimal}~(i)]\label{remintro:roofoptimal}
There exists  $u\in Lip(\Om)\bks \U(\Om)$ with $0<\F_{1+\te}(u)<\oo$.
\end{propositionINTRO}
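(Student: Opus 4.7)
The plan is to exhibit a ``roof'' function and bound $\F_{1+\te}$ on it from above and from below by direct computation. Fix unit vectors $e_1 \in X_1$, $e_2 \in X_2$ and a point $y = y_1 + y_2$ with $y_l$ interior to $\Om_l$, and set
\[
u(x) := \min\bigl(e_1 \cdot (x_1 - y_1),\ e_2 \cdot (x_2 - y_2)\bigr), \qquad x \in \Om.
\]
Then $u$ is $1$-Lipschitz. The ``crease'' $\Gamma := \{x \in \Om : e_1 \cdot (x_1 - y_1) = e_2 \cdot (x_2 - y_2)\}$ is a piece of hyperplane through $y$; away from $\Gamma$ the function $u$ locally depends only on $x_1$ (on one side) or only on $x_2$ (on the other). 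Since $y$ is interior, both sides meet $\Om$ in sets of positive measure, so $u$ is not a.e.\ equal to a function of $x_1$ alone, nor to a function of $x_2$ alone; hence $u \notin \U(\Om)$.

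For the upper bound on $\F_{1+\te}(u)$, set $s(x) := e_1 \cdot (x_1 - y_1) - e_2 \cdot (x_2 - y_2)$. Since $|s(x + z_l) - s(x)| \leq |z|$, the values $s(x)$, $s(x+z_1)$, $s(x+z_2)$ share the same sign whenever $|s(x)| > |z|$. On that region $u$ reduces to $e_2 \cdot (x_2 - y_2)$ (or to $e_1 \cdot (x_1 - y_1)$) on a neighborhood of $x$, so one of the two factors in the integrand vanishes. Thus the integrand is supported on $\{x : |s(x)| \leq |z|\}$, a slab of measure $\leq C |z|$. Bounding each factor by $\Lip(u)|z| = |z|$,
\[
\int_{\Om^\eps} |u(x+z_1) - u(x)|^{\te_1} |u(x+z_2) - u(x)|^{\te_2}\, dx \leq C|z|^{\te + 1},
\]
so after division by $|z|^{1+\te}$ and integration against $\rho_\eps$, $\E_{\eps, 1+\te}(u) \leq C \int \rho_\eps = C$, hence $\F_{1+\te}(u) \leq C < \infty$.

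For the lower bound, restrict to $\{s > 0\}$, where $u(x) = e_2 \cdot (x_2 - y_2)$, and set $a := -e_1 \cdot z_1$, $b := e_2 \cdot z_2$. A direct case analysis of the $\min$ yields $|u(x+z_1) - u(x)| = (a - s)_+$ and, for $b > 0$, $|u(x+z_2) - u(x)| = \min(s, b)$. Fix $c = 1/4$ (so that $(3c)^2 + c^2 < 1$, making the cone below nonempty on each sphere) and define
\[
K := \{z \in \R^n : -e_1 \cdot z_1 \geq 3c|z|,\ e_2 \cdot z_2 \geq c|z|\}.
\]
For $z \in K$ and $x$ satisfying $s(x) \in [c|z|, 2c|z|]$, one has $a - s \geq c|z|$ and $\min(s, b) \geq c|z|$, so both factors are $\geq c|z|$, and the integrand is $\geq c^\te/|z|$. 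For $\eps$ small, the $x$-slab intersects $\Om^\eps$ in a set of measure $\geq c' |z|$ (a slab of thickness $\propto |z|$ in the fixed direction $\nabla s / |\nabla s|$, over a cross-section of positive area near $y$), which gives a uniform lower bound on the inner integral. Finally, by the radial symmetry of $\rho$, the cone $K$ carries a positive fraction of the mass $\int \rho_\eps = 1$ independent of $\eps$, so $\E_{\eps, 1+\te}(u) \geq c'' > 0$ uniformly in $\eps$, and $\F_{1+\te}(u) > 0$.

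The principal difficulty is the lower bound: one must arrange that \emph{both} finite differences $u(x+z_l) - u(x)$ are simultaneously of order $|z|$, which forces $x$ to lie in a thin slab around $\Gamma$ and $z$ to lie in a specific cone of directions. The radial symmetry of $\rho$ is precisely what ensures that such a cone receives positive $\rho$-mass regardless of the fine structure of $\rho$.
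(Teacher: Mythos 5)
Your proof is correct, and it takes a genuinely different route from the paper's. The paper restricts to $n_1=n_2=1$ and $\Om=(0,1)^2$, takes $u=\min(x_1,x_2)$, and exploits the $1$-homogeneity of $\min$: the change of variables $x\mapsto \eps x$, $z\mapsto \eps z$ reduces $\E_{\eps,1+\te}(u)$ to $\eps\,\E_{1,1+\te}(\min;\eps^{-1}\Om)$, which is then computed exactly as $c_1/\eps - c_2$ (an integral of a function of $x_1-x_2$ alone over a diagonal strip), giving $\F_{1+\te}(u)=c_1\in(0,\infty)$ precisely; higher dimensions are handled by tensorization. By contrast, you work directly in arbitrary $(n_1,n_2)$, prove the upper bound by noting the integrand is supported on a slab $\{|s|\le|z|\}$ of measure $O(|z|)$ and dominated by $|z|^\te$ there, and prove the lower bound by picking a cone $K$ of increments $z$ and a slab of base points $x$ where both finite differences are uniformly of order $|z|$, invoking the radial symmetry of $\rho$ so that $K$ carries a fixed positive fraction of $\int\rho_\eps$. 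What the paper's approach buys is an explicit value of $\F_{1+\te}(u)$ (it literally shows $\E_{\eps,1+\te}(u)=c_1-\eps c_2$, a monotone computation of independent interest, reused later in Proposition~\ref{propCNTEX}); what your approach buys is a cleaner, coordinate-free argument that avoids the scaling trick and the separate tensorization step, works for any interior $y$ and any bounded $\Om$, and makes the geometric mechanism behind the lower bound (simultaneous largeness of both difference quotients only near the crease and only for a cone of $z$'s) completely transparent. Both are valid; the only thing you give up is the exact computation of the limiting constant, which is not needed for the statement.
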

The typical example of a function $u\in Lip(\Om)\bks \U(\Om)$ with $\F_{1+\te}(u)<\oo$  is the  ``roof'' function  $u(x):=\min(x_1,x_2)$ defined on $\Om=(0,1)\times(0,1)$ with $n_1=n_2=1$. This function is locally independent of $x_1$ or of $x_2$ away from the diagonal $\{x_1=x_2\}$, so that the integrand in~\eqref{Eeps} vanishes outside $\{(x,z): |x_1-x_2|\le |z|<\eps\}$. With this remark, it is not difficult to guess that $\F_{1+\te}(u)$ 
{ is finite and positive. } 

In this work we consider lower regularities than Lipschitz continuity, like merely measurable functions or $L^1_{loc}$ or $L^\oo$ functions for the finest results (recall however Remark~\ref{remarkLinfty}). In this setting, a second important example of a function which ``almost'' belongs to $\U(\Om)$   is given by the characteristic function of a ``corner'', $u:=\un_{(0,1)^2}$ defined in $\Om=(-1,1)^2$. Here, the integrand of~\eqref{Eeps} vanishes outside $\{(x,z):|x|<\eps,|z|<\eps\}$ and it is easy to check that $\F_{2}(u)<\infty$.
%We refer to Proposition~\ref{rem:optimal} for precise computations.   
\begin{propositionINTRO}[Proposition~\ref{rem:optimal}~(ii)]\label{remintro:corneroptimal}  
There exists  $u\in L^\oo(\Om)\bks \U(\Om)$ with $0<\F_{2}(u)<\oo$.
\end{propositionINTRO}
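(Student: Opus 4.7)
The plan is to verify the example suggested in the text: take $u := \un_{(0,1)^2}$ on $\Om = (-1,1)^2$ (so $n_1 = n_2 = 1$). Clearly $u \in L^\infty(\Om) \setminus \U(\Om)$, so the content of the proposition is to show $0 < \F_2(u) < \oo$.

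Since $u$ is $\{0,1\}$-valued, for every $\te > 0$ we have $|u(x+z_l) - u(x)|^\te = \un_{\{u(x+z_l) \ne u(x)\}}$, so the integrand of $\E_{\eps,2}(u)$ (before the weight $|z|^{-2}$) is simply the indicator of the set where both $u(x+z_1) \ne u(x)$ and $u(x+z_2) \ne u(x)$. Writing $z_1 = (a,0)$ and $z_2 = (0,b)$, the first condition forces $x_2 \in (0,1)$ together with $x_1, x_1+a$ on opposite sides of $[0,1]$; the second is the symmetric statement. Combining the two, the integrand equals $1$ exactly when $x \in (0,1)^2$, $x_1 + a \notin (0,1)$, and $x_2 + b \notin (0,1)$, which confines $x$ to a neighbourhood of one of the four corners of $(0,1)^2$ (the corner selected by the signs of $a,b$).

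The key observation is that the constraint $x \in \Om^\eps = (-1+\eps, 1-\eps)^2$ kills the contributions from three of these corners: only $(0,0)$ sits at distance $>\eps$ from $\partial\Om$, while $(1,0)$, $(0,1)$, $(1,1)$ all lie on $\partial\Om$, so any $x$ within $\eps$ of them is excluded from $\Om^\eps$. One then checks that for $|z| < \eps < 1/2$ the integrand is nonzero on $\Om^\eps$ only when $a,b<0$, in which case the set where it equals $1$ is $(0,|a|)\times(0,|b|)$, of Lebesgue measure $|a|\,|b|$. Hence
\[
\E_{\eps,2}(u) = \int_{\{a,b<0\}} \rho_\eps(z)\,\frac{|a|\,|b|}{a^2+b^2}\,dz.
\]

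The substitution $z=\eps w$ preserves $\rho_\eps(z)\,dz = \rho(w)\,dw$ and turns $|a|\,|b|/|z|^2$ into $|w_1|\,|w_2|/|w|^2$, so
\[
\E_{\eps,2}(u) = \int_{\{w_1,w_2<0\}} \rho(w)\,\frac{|w_1|\,|w_2|}{|w|^2}\,dw,
\]
which is independent of $\eps$. Finiteness follows from $|w_1|\,|w_2| \le |w|^2/2$ and $\rho \in L^1$; positivity holds because $\rho$ is non-negative with $\int \rho=1$ and $\supp\rho\subset B_1$, so $\rho$ is not supported on the axes alone. Passing to the liminf gives $\F_2(u) \in (0,\oo)$. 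The only delicate point of the argument will be the bookkeeping identifying the unique active corner and ruling out the other three via $x \in \Om^\eps$; once this is settled, everything else is a direct scaling computation.
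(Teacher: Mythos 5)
Your proposal is correct and follows essentially the same route as the paper: for $u=\un_{(0,1)^2}$ on $\Om=(-1,1)^2$ one computes $\E_{\eps,2}(u)$ directly, finds it is an $\eps$-independent positive constant (in fact $1/(4\pi)$), and concludes. The one point worth flagging is that your positivity argument is cleanest via the assumed radial symmetry of $\rho$, which reduces the third-quadrant integral to one quarter of $\int_{\R^2}\rho(w)\,|w_1||w_2|/|w|^2\,dw>0$; "not supported on the axes alone" is not by itself sufficient without radiality, though radiality is indeed part of the paper's standing assumptions.
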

Propositions~\ref{remintro:roofoptimal} and ~\ref{remintro:corneroptimal} show that the implication 
\[
\forall u\in L(\Om),\quad \F_{p}(u)=0\Longrightarrow u\in \U(\Om)
\] 
could only hold under the condition $p\geq \max(1+\te,2)$. Our first main result shows that in many cases, this bound is sharp.
\begin{definitionINTRO}\label{def:p}
For $\te_1,\te_2>0$, using the notation $\te:=\te_1+\te_2$, we define 
 \be\label{Pintro}
P(\te_1,\te_2)\, :=\, 
\left\{ \begin{array}{clr}
2& \text{ if $\te\leq1$,}&(a)\smallskip\\ 
1+\te& \text{ if $\te\ge 1$ and $\min(\te_1,\te_2)\leq 1$,}&(b)\smallskip\\
\min(\te_1,\te_2)+\te& \text{ if  $\te_1,\te_2>1$.}&(c)
\end{array}\right.
\ee
To lighten notation, from now on, $\te_1,\te_2>0$ being given, we note 
 \[
 \F( u):=\F_{P(\te_1,\te_2)}(u)=\F^{\te_1,\te_2}_{P(\te_1,\te_2)}(u).
 \]
\end{definitionINTRO}
\begin{theoremINTRO}[Theorem~\ref{theo:zero}]\label{theointro:zero}
 For every $u\in L(\Om)$, there holds $\F( u)=0\ \Longrightarrow u\in \U(\Om)$.
\end{theoremINTRO}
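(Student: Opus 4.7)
The plan is to reduce Theorem~\ref{theointro:zero} to the Lipschitz-case rigidity of Proposition~\ref{Lip:intro} through a regularization argument, with a sharpened analogue handling case~(c) of~\eqref{Pintro}.

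By Remark~\ref{remarkLinfty} one may assume $u\in L^\infty(\Om)$. Since $|z|\le\eps\le 1$ on $\supp\rho_\eps$, the map $p\mapsto\F_p(u)$ is non-decreasing, so in case~(a) of~\eqref{Pintro}, where $\te\le 1$ and $P=2\ge 1+\te$, the hypothesis $\F_2(u)=0$ automatically gives $\F_{1+\te}(u)=0$. Cases~(a) and~(b) therefore collapse to a single statement: if $\min(\te_1,\te_2)\le 1$ and $\F_{1+\te}(u)=0$, then $u\in \U(\Om)$.

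For this reduced statement I would mollify $u_\eta:=u*\psi_\eta$ against a smooth radial kernel $\psi_\eta$ and prove that $\F_{1+\te}(u_\eta)=0$ on $\Om^\eta$. The key estimate, when $\te_1,\te_2\ge 1$, applies Jensen's inequality separately to each of the two factors and then Fubini to get, for fixed $z_1,z_2$,
\[
\int |\Delta_{z_1}u_\eta(x)|^{\te_1}|\Delta_{z_2}u_\eta(x)|^{\te_2}\,dx \ \le\ \int(\psi_\eta*\tilde\psi_\eta)(w)\int|\Delta_{z_1}u(x)|^{\te_1}|\Delta_{z_2}u(x-w)|^{\te_2}\,dx\,dw,
\]
with $\Delta_y u(x):=u(x+y)-u(x)$; the additional translation $w$ lives on the scale $2\eta$ and is absorbed into the outer scale $\eps$ upon taking $\eta\ll\eps$ and using the radial symmetry of $\rho$. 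When $\min(\te_1,\te_2)<1$, Jensen points the wrong way, and one compensates by interpolating with a convex exponent using the $L^\infty$-bound on $u$. Either way one obtains $\F_{1+\te}(u_\eta)=0$, so Proposition~\ref{Lip:intro} applied to the smooth $u_\eta$ yields $u_\eta\in \U(\Om^\eta)$; pigeonholing on the label (``depends on $x_1$'' vs.\ ``depends on $x_2$'') and using the $L^1_{loc}$-convergence $u_\eta\to u$ then transfers the property to $u\in\U(\Om)$.

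For case~(c), with $\te_1,\te_2>1$ and $P=\min(\te_1,\te_2)+\te$, the same template applies once one has the Lipschitz analogue of Proposition~\ref{Lip:intro} at the higher exponent $P$. The key new ingredient is a smooth rigidity strictly stronger than~\eqref{diffinclu}: since $P-\te=\min(\te_1,\te_2)>1$, Taylor-expanding $\E_{\eps,P}(u)$ to second order for $u\in C^2$ should produce an additional pointwise constraint on $D^2 u$ (presumably forcing mixed partials $\partial_{x_1}\partial_{x_2}u$ to vanish wherever $\nabla u\ne 0$), which rigidifies the differential inclusion outside the $C^1$ class. Establishing this sharper rigidity at exponent $P$, and controlling the regularization in the correspondingly stronger norm, is the main obstacle I anticipate.
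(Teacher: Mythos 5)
Your route --- mollify, apply the smooth rigidity result, and pass to the limit --- is genuinely different from the paper's, which never mollifies. The paper instead works directly with discrete second-order differences: it extracts a good radial direction $z^k=r_k(\lambda_1R_1e_i+\lambda_2R_2f_j)$ (Lemma~\ref{lem:goodzk}), shows via a discrete integration by parts and a careful truncation $\phi_\eta$ of $s\mapsto|s|^{\te-1}s$ that $\F$ controls the distribution $\mu[u]=\nabla_1\nabla_2 u$ (Lemma~\ref{lem:measure}), concludes $\mu[u]=0$ hence $u(x)=u_1(x_1)+u_2(x_2)$, and then observes that the factored energy forces one of $u_1,u_2$ to be constant. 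Proposition~\ref{Lip:intro}, which you invoke as an input, is in the paper a \emph{corollary} of Theorem~\ref{theo:zero}, not a lemma underlying it; only Proposition~\ref{prop:C1}~(ii), proved in Section~\ref{Sexamples}, is available to you independently.

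The decisive gap is the claim that $\F(u)=0$ implies $\F(u_\eta)=0$ for a mollification $u_\eta$. Your Jensen step produces the \emph{shifted} quantity $\int|\Delta_{z_1}u(x)|^{\te_1}|\Delta_{z_2}u(x-w)|^{\te_2}\,dx$ with a translation $w$ of order~$\eta$ in one factor only, and you assert this is ``absorbed into the outer scale $\eps$.'' But the definition of $\E_{\eps,p}$ forces both difference quotients to share the \emph{same} base point $x$; a shift by $z_1$ or $z_2$ can be undone by the changes of variables used in Lemma~\ref{lem:goodzk}, but a shift by an independent $w$ cannot, and the shifted energy is not controlled by $\F_p(u)$. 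Worse, no quantitative comparison $\F_P(u_\eta)\lesssim \F_P(u)$ can hold: for the corner $u=\un_{(0,1)^2}$ one has $\F_2(u)=1/(4\pi)<\infty$, yet any mollification $u_\eta$ is Lipschitz with $\nb_1 u_\eta\cdot\nb_2 u_\eta\not\equiv 0$, so $\F_\te(u_\eta)\in(0,\infty)$ and therefore, by Remark~\ref{remark1}, $\F_2(u_\eta)=\infty$ whenever $\te\le 1$. Thus mollification can raise $p^*(u)$ strictly, and the threshold $P(\te_1,\te_2)$ is unstable under it --- any proof that $\F(u)=0\Rightarrow\F(u_\eta)=0$ would have to exploit the exact vanishing in a way that is tantamount to already knowing the decoupling $u(x)=u_1(x_1)+u_2(x_2)$, i.e.\ the conclusion of Lemma~\ref{lem:measure}. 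The ``interpolation with a convex exponent'' you sketch for $\min(\te_1,\te_2)<1$ cannot repair this, for the same reason. Finally, for case~(c) you acknowledge that the required Lipschitz rigidity at exponent $P=\min(\te_1,\te_2)+\te$ is missing; your heuristic also misidentifies the second-order constraint (on the relevant scale the borderline term involves the pure second derivative $D^2_{11}u$ on the set $\{\nb_1 u=0\}$, not the mixed derivative $\nb_1\nb_2u$), whereas the paper avoids this entirely by a H\"older reduction of case~(c) to case~(b) inside Lemma~\ref{lem:measure}.
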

\begin{remarkINTRO}\label{remark2}
The counter-examples of Propositions~\ref{remintro:roofoptimal},~\ref{remintro:corneroptimal} show that the exponent $P(\te_1,\te_2)$ cannot  be improved in the cases (a) and (b) of~\eqref{Pintro}. On the contrary, in case $(c)$ we believe that the sharp exponent should still be $1+\te$ although we only succeed to prove that the optimal exponent was not larger than $P(\te_1,\te_2)=\min(\te_1,\te_2)+\te>1+\te$.
 \end{remarkINTRO}
 \medskip
 
Since $\F( u)<\infty$ implies $\F_{p}(u)=0$ for every $p<P(\te_1,\te_2)$ (see Remark~\ref{remark1}), we obtain as direct corollary of Theorem~\ref{theointro:zero}:
\begin{corollaryINTRO}
 If $u\in L(\Om)$ satisfies $\F_{p}(u)<\infty$ for some $p>P(\te_1,\te_2)$, then $u\in \U(\Om)$.
\end{corollaryINTRO}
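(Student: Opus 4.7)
The plan is to chain together Remark~\ref{remark1} and Theorem~\ref{theointro:zero}; no new ingredients are needed since the corollary is purely formal.

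First, I would record the monotonicity observation from Remark~\ref{remark1}: for any measurable $u$ and any exponents $q'<q$, the inequality
\[
\E_{\eps,q'}(u)\ \leq\ \eps^{q-q'}\,\E_{\eps,q}(u)
\]
holds for every $\eps>0$. Taking the $\liminf$ as $\eps\downarrow 0$ shows that if $\F_{q}(u)<\infty$ then $\F_{q'}(u)=0$ for every $q'<q$. In other words, the ``critical'' exponent $p^*(u)$ of Remark~\ref{remark1} satisfies $p^*(u)\geq q$ whenever $\F_q(u)<\infty$, and $\F_{q'}(u)=0$ for all $q'<p^*(u)$.

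Next, I would apply this to the hypothesis of the corollary. Assume $\F_{p}(u)<\infty$ for some $p>P(\te_1,\te_2)$. Since $P(\te_1,\te_2)<p$, the previous step (applied with $q=p$ and $q'=P(\te_1,\te_2)$) yields
\[
\F(u)\ =\ \F_{P(\te_1,\te_2)}(u)\ =\ 0.
\]

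Finally, I would invoke Theorem~\ref{theointro:zero}, which states precisely that $\F(u)=0$ forces $u\in \U(\Om)$, to conclude. Since the whole argument is a two-line chaining of already-stated results, there is no real obstacle; the only point requiring a sanity check is that Remark~\ref{remark1} is applied to the fixed data $(\te_1,\te_2)$ so that the associated critical exponent $p^*(u)$ and the function $P(\te_1,\te_2)$ refer to the same family $\{\F_p^{\te_1,\te_2}\}_p$.
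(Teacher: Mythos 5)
Your argument is correct and matches the paper's own (one-line) justification: the paper also combines the monotonicity in $p$ from Remark~\ref{remark1} with Theorem~\ref{theointro:zero}. Nothing to add.
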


Another consequence of the theorem is the following generalization of \cite[Proposition~4.3]{GRun}.
\begin{corollaryINTRO}[Corollary~\ref{Coro:geneGRun}]\label{CoroIntro:geneGRun}
 For every $p\ge P(\te_1,\te_2)$, $r>0$, and every { $u\in L(\Om)$, } if 
 \begin{equation*}%\label{hyp:GRun}
   \int_{B_r}\int_{\Om^r} \frac{|u(x+z_1)-u(x)|^{\te_1} |u(x+z_2)-u(x)|^{\te_2}}{|z|^{n+p}} dx dz <\infty,
 \end{equation*}
then $u\in \U(\Om^r)$.
\end{corollaryINTRO}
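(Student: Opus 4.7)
The strategy is to verify $\F_p(u;\Om^r)=0$ and then invoke Theorem~\ref{theointro:zero}. Write $f(x,z):=|u(x+z_1)-u(x)|^{\te_1}|u(x+z_2)-u(x)|^{\te_2}$ and denote by $I_p$ the finite integral given in the hypothesis.

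The core step is to establish the averaged Fubini estimate
\[
\int_0^r \frac{\E_{\eps,p}(u;\Om^r)}{\eps}\,d\eps\ \leq\ C_n\,I_p
\]
for a dimensional constant $C_n$. Using $\supp\rho_\eps\subset B_\eps$ together with the inclusion $(\Om^r)^\eps\subset\Om^r$ to enlarge the $x$-domain, and then swapping the orders of integration, one obtains
\[
\int_0^r\frac{\E_{\eps,p}(u;\Om^r)}{\eps}\,d\eps\ \leq\ \int_{B_r}\int_{\Om^r}\frac{f(x,z)}{|z|^p}\Bigl(\int_{|z|}^r\frac{\rho_\eps(z)}{\eps}\,d\eps\Bigr)dx\,dz.
\]
Writing $\rho(w)=\psi(|w|)$ by radiality and substituting $t=|z|/\eps$ in the inner integral gives
\[
\int_{|z|}^r\frac{\rho_\eps(z)}{\eps}\,d\eps\ =\ \frac{1}{|z|^n}\int_{|z|/r}^{1}t^{n-1}\psi(t)\,dt\ \leq\ \frac{C_n}{|z|^n},
\]
the last bound coming from the normalization $\int\rho=1$. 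Plugging this back converts the $|z|^{-p}$ weight into $|z|^{-(n+p)}$, reproducing the integrand of $I_p$.

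From the finiteness of $\int_0^r \eps^{-1}\E_{\eps,p}(u;\Om^r)\,d\eps$ and the non-negativity of $\E_{\eps,p}$, we deduce $\liminf_{\eps\dw 0}\E_{\eps,p}(u;\Om^r)=0$ (any uniform positive lower bound near $0$ would make the integral diverge logarithmically), hence $\F_p(u;\Om^r)=0$. If $p=P(\te_1,\te_2)$, Theorem~\ref{theointro:zero} applies directly; otherwise, Remark~\ref{remark1} applied to $\F_p(u;\Om^r)<\infty$ gives $\F_{P(\te_1,\te_2)}(u;\Om^r)=0$, so Theorem~\ref{theointro:zero} again yields $u\in\U(\Om^r)$. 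The only subtlety is that, $\rho$ being merely $L^1$, one cannot bound $\rho_\eps$ by $\|\rho\|_\infty\eps^{-n}$ directly; the $1/\eps$-weighted $\eps$-average above is the right substitute, as it produces, via Fubini, precisely the $|z|^{-n}$ factor needed to match $I_p$.
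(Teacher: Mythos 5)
Your proof is correct. It reaches the same reduction as the paper (show $\F_p(u;\Om^r)=0$ and invoke Theorem~\ref{theointro:zero}, via Remark~\ref{remark1} when $p>P(\te_1,\te_2)$), but implements the key estimate differently.

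The paper's proof specializes the kernel to the normalized indicator of an annulus $B_1\setminus B_{1/2}$ and discretizes over dyadic scales $\eps_k=2^{-k}r$: on the pairwise disjoint annuli $B_{2^{-k}r}\setminus B_{2^{-k-1}r}$ the rescaled kernel $\rho_{\eps_k}(z)$ is comparable to $|z|^{-n}$, so $\sum_k \E_{\eps_k,p}(u;\Om^r)$ is dominated by the hypothesis integral $I_p$, forcing $\E_{\eps_k,p}(u;\Om^r)\to 0$. You instead keep an arbitrary radial kernel $\rho$ and compute the continuous scale-average $\int_0^r \eps^{-1}\E_{\eps,p}(u;\Om^r)\,d\eps$; Fubini converts the family of scale-$\eps$ kernels into the Riesz weight $|z|^{-n}$ through the identity $\int_{|z|}^r \eps^{-1}\rho_\eps(z)\,d\eps=|z|^{-n}\int_{|z|/r}^1 t^{n-1}\psi(t)\,dt$, and finiteness of the $\eps^{-1}$-weighted average forces $\liminf \E_{\eps,p}=0$. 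This is the continuous analogue of the paper's dyadic summation; the mechanism (recovering $|z|^{-n}$ by collecting all scales) is identical. Your version has the minor merit of being kernel-agnostic --- it proves the corollary directly for whichever kernel $\rho$ was fixed in Section~1 --- whereas the paper's argument chooses a specialized kernel and relies (implicitly, and legitimately) on the fact that the conclusion $u\in\U(\Om^r)$ does not depend on $\rho$. Two small points worth stating explicitly in a write-up: the inclusion $(\Om^r)^\eps\subset\Om^r$ used to enlarge the inner domain, and that Tonelli (non-negativity) is what licenses the interchange before finiteness is established. Your closing remark about $L^1$ kernels is accurate: with a bounded $\rho$ the crude bound $\rho_\eps(z)\le \|\rho\|_\infty|z|^{-n}$ together with dominated convergence already gives $\E_{\eps,p}\to 0$, but for merely integrable $\rho$ the scale-averaging (or the paper's dyadic choice) is the right substitute.
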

%\begin{proof}[Proof of Corollary~\ref{CoroIntro:geneGRun}]%Or Corollary~\ref{Coro:geneGRun}\label
%As a first example we see that letting $\rho:= \frac{1}{|B_r\sm B_{r/2}|}\un_{B_r\sm B_{r/2}}$ and for $k\ge 0$, $\eps_k:= 2^{-k} r$, we have 
%\begin{align*}
% \lefteqn{\int_{B_r} \int_{\Om^r} \dfrac{|Du(x,z_1)|^{\te_1} |Du(x,z_2)|^{\te_2}}{|z|^{p+n}} \,dx \,dz}\\
% &=\sum_{k=0}^{\infty} \int_{B_{\eps_k}\sm B_{\eps_{k+1}}} \frac{1}{|z|^n} \int_{\Om^r} \dfrac{|Du(x,z_1)|^{\te_1} |Du(x,z_2)|^{\te_2}}{|z|^{p+n}} \,dx \,dz\\
% &\ge \sum_{k=0}^{\infty}\int_{\R^n} \frac{\un_{B_{\eps_k}\sm B_{\frac{\eps_{k}}{2}}}(z)}{\eps_k^n} \int_{(\Omega^r)^{\eps_k}} \dfrac{|Du(x,z_1)|^{\te_1} |Du(x,z_2)|^{\te_2}}{|z|^{p}} \,dx \,dz\\
%&\ges \sum_{k=0}^{\infty}\int_{\R^n} \rho_{\eps_k}(z)  \int_{(\Omega^r)^{\eps_k}} \dfrac{|Du(x,z_1)|^{\te_1} |Du(x,z_2)|^{\te_2}}{|z|^{p}} \,dx \,dz.
% \end{align*}
%Therefore, if 
%\begin{equation*}%\label{eq:finiteRuna}
% \int_{B_r} \int_{\Om^r} \dfrac{|u(x+z_1)-u(x)|^{\te_1} |u(x+z_2)-u(x)|^{\te_2}}{|z|^{p+n}} \,dx \,dz<\oo,
%\end{equation*}
%then 
%\[
% \lim_{k\up \oo} \int_{\R^n} \rho_{\eps_k}(z)  \int_{(\Omega^r)^{\eps_k}} \dfrac{|Du(x,z_1)|^{\te_1} |Du(x,z_2)|^{\te_2}}{|z|^{p}} \,dx \,dz=0.
%\]
%\end{proof}
%
This is indeed a far reaching generalization of \cite[Proposition~4.3]{GRun} since the same conclusion was obtained there  under the assumptions that $n_2=1$, $u=\un_E$
for some set $E$ of finite perimeter satisfying an extra technical assumption and $p>n$ (see also~\cite{DanRun} where the condition $p>n$ was independently relaxed to $p\ge 2$). As opposed to~\cite{GRun} and~\cite{DanRun}
where the proofs are somewhat geometrical and  based on slicing, our proof is purely analytical. 
 \medskip

The main insight in the proof of Theorem~\ref{theointro:zero} is that if $\F( u)$ does not control  first order differential quotients it does control second order ones.
Very roughly speaking (at least for $\te\le 1$), the main observation is that\footnote{By convention
$a\les b$ means that there exists a non-negative constant $C$ which may only depend on $\te_1,\te_2$, $n$, $\Omega$ or on the kernel $\rho$ such that $a\leq C b$.}  
\begin{equation*}%\label{eq:controlintro}
 \liminf_{\eps\dw 0} \int_{\R^n}\rho_\eps(z) \int_{\Omega^\eps} \dfrac{|u(x+z_1+z_2)-u(x+z_1)-u(x+z_2)+u(x)|^{\te}}{|z|^{P(\te_1,\te_2)}} \,dx \,dz\les \F( u).
\end{equation*}
This yields a  quantitative 
control on  the distribution\footnote{For $u\in L^1_{loc}(\Om)$ we note $\nabla_l$ the distributional gradient with respect to the variables in $X_l$.}
\[
 \mu[u]:=\nabla_1\nabla_2u.
\]
\begin{propositionINTRO}[Proposition~\ref{propquant}]\label{propquant:intro}
Let $u\in L^1_{loc}(\Om)$. For every $\vhi\in C^{\oo}_c(\Om,\R^{n_1\times n_2})$,
 \be
 \label{quanta:intro}
  \langle\mu[u], \vhi\rangle\les 
  \begin{cases}
\quad\quad% \F( u) \|u\|_{\oo}^{1-\te} \|\vhi\|_{\oo} \qquad &\text{ if $\te=1$ or $\te<1$ and  $u\in L^\oo(\Om)$}, \smallskip\\
  \F( u)\|\vhi\|_{\oo} &\text{ if (a) holds with }\begin{cases}\te=1\text{ or }\\ \te<1\text{ and  }\|u\|_\oo
  \leq 1,\end{cases} \medskip\\
\ \F( u)^{1/\te}\|\nb_1 \vhi\|_1^{1-1/\te}\|\vhi\|_{\oo}^{1/\te} &\text{ if (b) holds  and } \te_1\le 1, \medskip\\
\ \F( u)^{1/\te}\|\nb_1 \vhi\|_1^{\te_2/\te}\|\vhi\|_{\oo}^{\te_1/\te} & \text{ if (c) holds  and } \te_1\le \te_2.
  \end{cases}
\ee
\end{propositionINTRO}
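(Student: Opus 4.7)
Set $\Delta^2_{z_1,z_2}u(x):=u(x+z_1+z_2)-u(x+z_1)-u(x+z_2)+u(x)$. The strategy is to combine the second-order bound stated just above the proposition,
\begin{equation*}
\liminf_{\eps\dw 0}\int_{\R^n}\rho_\eps(z)\int_{\Om^\eps}\frac{|\Delta^2_{z_1,z_2}u(x)|^\te}{|z|^P}\,dx\,dz\les \F(u),
\end{equation*}
with a discrete representation of $\mu[u]$ and H\"older's inequality.

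First I would establish the representation
\begin{equation*}
\langle\mu[u],\varphi\rangle = C^{-1}\lim_{\eps\dw 0}\eps^{-4}\int\rho_\eps(z)\int \Delta^2_{z_1,z_2}u(x)\,\langle\varphi(x),z_1\otimes z_2\rangle\,dx\,dz,
\end{equation*}
where $C=C(\rho,n_1,n_2)>0$ and $\langle A,z_1\otimes z_2\rangle:=\sum_{i,j}A_{ij}(z_1)_i(z_2)_j$. This follows by transferring the second differences from $u$ to $\varphi$ through the change of variables $x\mapsto x+z_1+z_2$ (and similarly for the other three terms), Taylor-expanding $\varphi$, and computing the moment $\int\rho(w)\,w_{1i}w_{1k}w_{2j}w_{2l}\,dw=C\,\delta_{ik}\delta_{jl}$ from the radial symmetry of $\rho$ and the orthogonality of $X_1$, $X_2$.

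In case (a), where $\te\leq 1$ and $P=2$, the hypothesis yields $|\Delta^2 u|\les|\Delta^2 u|^\te$ (either $\te=1$, or $|\Delta^2 u|\leq 4\|u\|_\infty\leq 4$). Together with $|\langle\varphi,z_1\otimes z_2\rangle|\leq|z|^2\|\varphi\|_\infty$ and $|z|^2\leq\eps^4/|z|^2$ on $\supp\rho_\eps$, the representation gives
\begin{equation*}
\eps^{-4}\left|\int\rho_\eps\,\Delta^2 u\,\langle\varphi,z_1\otimes z_2\rangle\,dx\,dz\right|\les\|\varphi\|_\infty\int\rho_\eps\frac{|\Delta^2 u|^\te}{|z|^2}\,dx\,dz,
\end{equation*}
and passing to the $\liminf$ via the second-order bound above yields the claim.

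For $\te>1$ (cases (b) and (c)) I would apply H\"older with exponents $\te$ and $\te'=\te/(\te-1)$ to the representation; the first factor is then controlled by $\F(u)^{1/\te}$. The crude $L^\infty$ estimate on the conjugate factor leaves an uncompensated $\eps^{1/\te-1}$, which must be recovered from the cancellation
\begin{equation*}
\int\rho_\eps(z)\,\Delta^2_{z_1,z_2}u(x)\,\langle\varphi(x+z_1)-\varphi(x),z_1\otimes z_2\rangle\,dx\,dz=0,
\end{equation*}
valid because of the change of variables $(x,z_1)\mapsto(x+z_1,-z_1)$, the identity $\Delta^2_{-z_1,z_2}u(x+z_1)=-\Delta^2_{z_1,z_2}u(x)$, and the radial symmetry of $\rho$. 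Equivalently, $\varphi(x)$ in the representation may be replaced by a symmetric average such as $[\varphi(x)+\varphi(x+z_1)]/2$, which effectively injects a factor of $\nb_1\varphi$ into the integrand; combined with the Poincar\'e-type interpolation $\|\varphi\|_{L^{\te'}}^{\te'}\les\|\nb_1\varphi\|_{L^1(\Om)}\|\varphi\|_\infty^{\te'-1}$, this produces the interpolated bound of case (b). Case (c) is handled by the same mechanism, but the finer exponent $P=\min(\te_1,\te_2)+\te$ permits a sharper Gagliardo--Nirenberg split between $\|\nb_1\varphi\|_1^{\te_2/\te}$ and $\|\varphi\|_\infty^{\te_1/\te}$; the precise balancing of $\eps$-powers against interpolated norms of $\varphi$ in this last case is the main technical obstacle of the proof.
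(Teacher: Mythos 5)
Your treatment of case (a) is sound and takes a somewhat different route from the paper: you package the estimate through a representation of $\langle\mu[u],\vhi\rangle$ as an $\eps^{-4}$-scaled double integral of $\Delta^2_{z_1,z_2}u\cdot\langle\vhi,z_1\otimes z_2\rangle$, using the full $z$-integral and the radial moment of $\rho$, whereas the paper (Lemmas~\ref{lem:goodzk} and \ref{lem:measure}) first extracts a single good sequence $z^k=r_k(e_i+f_j)$ by Fatou/Markov and then carries out a discrete integration by parts along that sequence. Both work; yours is arguably more self-contained for $\te\le1$, while the paper's selection lemma is reused in the later quantitative sections, which is why it is set up that way.

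For cases (b) and (c) there is a genuine gap. The cancellation
$$\int\rho_\eps(z)\,\Delta^2_{z_1,z_2}u(x)\,\langle\vhi(x+z_1)-\vhi(x),z_1\otimes z_2\rangle\,dx\,dz=0$$
is correct, but it is \emph{vacuous} for your purpose: replacing $\vhi(x)$ by $\tfrac12[\vhi(x)+\vhi(x+z_1)]$ is an identity precisely \emph{because} of this cancellation, so no factor of $\nb_1\vhi$ or extra power of $|z_1|$ is injected into the integrand. Consequently the deficit $\eps^{1/\te-1}$ that you correctly identify after H\"older is never absorbed, and the scheme does not close. The difficulty is structural: $\Delta^2_{z_1,z_2}u$ is linear in $u$, and for $\te>1$ the energy $\F(u)$ only controls the $L^\te$-norm of this second difference (after H\"older this gives the $\F(u)^{1/\te}$ factor), which is too weak by itself. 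The paper's mechanism is genuinely nonlinear: it introduces $\phi(s)=|s|^{\te-1}s$ and its Lipschitz truncation $\phi_\eta$, shows that $\F(u)$ controls the $L^1$-norm of $D_{r_k}[\phi(D_{r_k}u(\cdot,f_j))]$ via the elementary bound $||s|^{\te-1}s-|t|^{\te-1}t|\les|s-t|(|s|^{\te-1}+|t|^{\te-1})$ (this is where $\te_1\le1$ enters), then splits $D_{r_k}u=\phi_\eta(D_{r_k}u)-\chi_\eta(D_{r_k}u)$ with $\|\chi_\eta\|_\oo\le\eta$, and optimizes in $\eta$ to produce exactly the interpolated exponents of~\eqref{quanta:intro}. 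Case~(c) is then reduced to case~(b) by a H\"older inequality in $x$ with exponent $\te_1$, not by a sharper Gagliardo--Nirenberg split as you suggest. None of this appears in your sketch, and I do not see how to salvage the purely linear H\"older approach for $\te>1$.
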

\begin{remarkINTRO}
For some estimates (as the first one above), $u$ is required to be bounded. In these situations, we avoid complex formulas involving $\|u\|_\oo$ by assuming $\|u\|_\oo\leq 1$. The general case can be recovered by scaling.
\end{remarkINTRO}
The proof of Theorem~\ref{theointro:zero} continues as follows.  
Proposition~\ref{propquant:intro} implies that if $\F( u)=0$ then $\mu[u]=0$. Obviously, the space of functions $u\in L^1_{loc}(\Om)$ with $\mu[u]=0$ is 
\[
L^1_{loc}(\Om)\cap \Span \U(\Om)=\lt\{x\in \Om\mapsto u_1(x_1)+u_2(x_2) : u_l\in L^1_{loc}(\Om_l)\text{ for }l=1,2\rt\}.
\]
Plugging the decomposition $u(x)=u_1(x_1)+u_2(x_2)$ into the definition of $\E_{\eps,p}(u)$, the integrations over $\Om_1$ and $\Om_2$ decouple. Using again $\E(u)=0$, we deduce that for $l=1$ or $l=2$, we have a  control of the form 
\[
\int_{\Om_l^\eps}\int_{\Om_l^\eps} \tilde \rho_\eps(x_l-y_l) \dfrac{|u_l(x_l)-u_l(y_l)|^{\te_l}}{|x_l-y_l|^{\te_l}} \, dx_l \, dy_l\ \stackrel{\eps\dw0}\longto\ 0.
\]  We then use  ideas from~\cite{BBM2001} in either $\Om_1$ or $\Om_2$ to obtain Theorem~\ref{theointro:zero}.

Before developing further the consequences of Proposition~\ref{propquant:intro} let us comment about its optimality. In the cases (a) and (b) of Proposition~\ref{propquant:intro}, the estimates are optimal in the sense that for $p<P(\te_1,\te_2)$,~\eqref{quanta:intro} does not hold in general. Indeed, we can precise Propositions~\ref{remintro:roofoptimal},~\ref{remintro:corneroptimal} as follows.
\begin{propositionINTRO}[Proposition~\ref{rem:optimal}]\label{remintro:optimal}~
\begin{itemize}
\item[(i)] There exists  $u\in Lip(\Om)\bks \U(\Om)$ with $0<\F_{1+\te}(u)<\oo$ and $\mu[u]\neq 0$ (typically, $u$ is a ``roof'' function);
\item[(ii)] There exists  $u\in L^\oo(\Om)\bks \U(\Om)$ with $0<\F_{2}(u)<\oo$  and $\mu[u]\neq 0$ (typically,  $u$ is a``corner'' function).
\end{itemize}
\end{propositionINTRO}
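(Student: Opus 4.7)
The plan is to verify the properties of the two explicit examples suggested in the text: the ``roof'' for (i) and the ``corner'' indicator for (ii). I will work in the case $n_1 = n_2 = 1$, which suffices for a purely existential statement.

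For (i), set $\Om = (0,1)^2$ and $u(x_1, x_2) := \min(x_1, x_2)$. Clearly $u \in \Lip(\Om) \bks \U(\Om)$. A direct distributional computation gives $\nb_1 u = \un_{\{x_1 < x_2\}}$, hence $\mu[u] = \nb_2 \un_{\{x_1 < x_2\}}$ is (up to sign) the one-dimensional Hausdorff measure carried by the diagonal $\{x_1 = x_2\} \cap \Om$, which is nonzero. To handle $\F_{1+\te}(u)$, I perform the change of variables $z = \eps y$ and $t := (x_1 - x_2)/\eps$, so that $dx_1 \, dx_2 = \eps \, dt \, dx_2$ and
\[
u(x + z_1) - u(x) = \eps \, h_1(t, y_1), \qquad u(x + z_2) - u(x) = \eps \, h_2(t, y_2),
\]
with $h_1(t, s) := \min(t + s, 0) - \min(t, 0)$ and $h_2(t, s) := \min(t, s) - \min(t, 0)$. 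Both functions are bounded in absolute value by $|s|$, and the product $h_1(t, y_1) \, h_2(t, y_2)$ vanishes whenever $|t| > 1$ (using $|y_l| \leq 1$ on $\supp \rho$). The powers of $\eps$ cancel exactly and one obtains
\[
\E_{\eps, 1+\te}(u) \;=\; \int_\eps^{1-\eps} dx_2 \int \rho(y) \int_{|t|\leq 1} \frac{|h_1(t,y_1)|^{\te_1} \, |h_2(t,y_2)|^{\te_2}}{|y|^{1+\te}} \, dt \, dy \;+\; o(1),
\]
the $o(1)$ accounting for the truncation of the $t$-range due to the $x_2$-boundary. Dominated convergence (with dominating function $\rho(y) \, |y|^{\te - 1}$, integrable on $B_1$ since $n = 2$) shows that $\F_{1+\te}(u)$ is finite. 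Positivity follows by direct inspection of a point where $h_1 \, h_2$ is non-zero (for instance $t = 1/2$, $y_1 = y_2 = -1$ gives $h_1 h_2 = 1/2$) and continuity.

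For (ii), set $\Om = (-1,1)^2$ and $u := \un_{(0,1)^2}$. Clearly $u \in L^\oo(\Om) \bks \U(\Om)$, and a direct distributional computation gives $\mu[u] = \delta_{(0,0)} - \delta_{(0,1)} - \delta_{(1,0)} + \delta_{(1,1)}$, a nonzero sum of four Dirac masses at the corners of $(0,1)^2$. Each factor $u(x + z_l) - u(x)$ takes values in $\{-1, 0, 1\}$, and the product $|u(x+z_1)-u(x)|\,|u(x+z_2)-u(x)|$ is nonzero only when $(x_1, x_1 + z_1)$ straddles $0$ or $1$ in the $x_1$-direction \emph{and} $(x_2, x_2 + z_2)$ straddles $0$ or $1$ in the $x_2$-direction. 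This confines $x$ to a union of four rectangles of respective sizes $O(|z_1|) \times O(|z_2|)$ located at the four corners of $(0,1)^2$, so
\[
\int_{\Om^\eps} |u(x+z_1) - u(x)|^{\te_1} \, |u(x+z_2) - u(x)|^{\te_2} \, dx \;\les\; |z_1| \, |z_2| \;\les\; |z|^2,
\]
and plugging into the definition yields a uniform bound on $\E_{\eps, 2}(u)$, so $\F_2(u) < \oo$. Positivity follows once again from the change of variables $z = \eps y$: after rescaling, the four corner contributions become $\eps$-independent integrals over $(0,1)^2$-patches in normalized coordinates, and their sum is a strictly positive explicit constant.

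The main technical step in (i) is the Fubini/change-of-variables manipulation together with the uniform integrability argument that justifies dominated convergence; this is where the exponent $1 + \te$ enters in a sharp way, as $|h_l| \leq |y_l|$ feeds $|y|^\te$ into the integrand and leaves just $|y|^{-1}$ from the kernel, which is borderline integrable in dimension $n = 2$. Part (ii) reduces to a counting argument: the exponent $p = 2$ matches exactly the product of scales $|z_1| \cdot |z_2|$ of the neighborhoods of the four corners.
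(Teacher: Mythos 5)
Your choice of examples is the same as the paper's (the roof on $(0,1)^2$ and the corner indicator on $(-1,1)^2$), and the overall strategy of rescaling and localizing is the paper's as well. The critical problem is in the justification that $\F_{1+\te}(u)<\oo$ in part~(i). After your change of variables you bound $|h_l(t,y_l)|\le |y_l|$, which yields a dominating function of the form $\rho(y)\,\frac{|y_1|^{\te_1}|y_2|^{\te_2}}{|y|^{1+\te}}\le \rho(y)\,|y|^{-1}$ for the joint $(t,y)$-integral (the $t$-integration over $(-1,1)$ only costs a constant; your stated $\rho(y)|y|^{\te-1}$ already does not match the computation). The function $\rho(y)/|y|$ is integrable on $B_1\subset\R^2$ only if $\rho$ is bounded, but the paper assumes merely $\rho\in L^1$. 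The missing step is that the product $h_1(t,y_1)h_2(t,y_2)$ vanishes whenever $|t|\gtrsim\max(|y_1|,|y_2|)$, not merely when $|t|>1$; this gives $\int_{|t|\le1}|h_1|^{\te_1}|h_2|^{\te_2}\,dt\lesssim|y|^{1+\te}$ and hence the integrable dominating function $\rho(y)$ itself. This localization in $t$ is the estimate which actually makes the exponent $1+\te$ sharp, and without it you cannot conclude $\F_{1+\te}(u)<\oo$ in the paper's generality.

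Two smaller points. In (i), the positivity check at $t=1/2$, $y_1=y_2=-1$ is vacuous, since $|y|=\sqrt2>1$ puts $y$ outside $\supp\rho\subset B_1$; take instead, e.g., $t=0$, $y_1=y_2=-1/2$ where $h_1h_2=1/4\ne0$ and $|y|<1$. In (ii), since $(1,0)$, $(0,1)$, $(1,1)$ lie on $\partial\Om$, on $\Om=(-1,1)^2$ one has $\mu[u]=\delta_{(0,0)}$ only, and for small $\eps$ and $x\in\Om^\eps$ only the corner at the origin contributes; this does not affect either the nonvanishing of $\mu[u]$ or the upper bound $\E_{\eps,2}(u)\lesssim1$, and the remainder of part~(ii) is correct and mirrors the paper's computation.
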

Let us make the important observation that  Proposition~\ref{propquant:intro} gives more information than $\F(u)=0$ implies $u\in \U(\Om)$.
Indeed, it shows that if $0<\F( u)<\infty$, then $\mu[u]$ lies in some Sobolev space with null or negative regularity exponent. In particular,  in case $(a)$ ($\te=1$ or $\te<1$ and $u\in L^\oo(\Om)$), if $\F( u)=\F_{2}(u)$ is finite then $\mu[u]$ is a finite Radon measure.
{By construction, we prove that it is not necessarily true in the other cases.}
\begin{propositionINTRO}[Proposition~\ref{propCNTEX}]\label{propCNTEX:intro}
For every $\te>1$, there exists $u\in Lip(\R^2)$, compactly supported, with $\F_{1+\te}(u)<\infty$ and for which $\mu[u]=\pt_1\pt_2u$ is not a finite Radon measure.
\end{propositionINTRO}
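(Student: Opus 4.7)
The plan is to construct $u$ as a disjoint superposition of many ``flat'' rescaled copies of a single roof function, exploiting a scaling gap that opens up precisely when $\te>1$. Let $\psi(x_1,x_2):=\min(x_1,1-x_1,x_2,1-x_2)^+$ be the standard unit pyramid on $[0,1]^2$; as in Proposition~\ref{remintro:roofoptimal} we have $0<\F_{1+\te}(\psi)<\infty$ and $0<|\mu[\psi]|(\R^2)<\infty$, with $\mu[\psi]$ a nonzero signed measure concentrated on the four ridges of the pyramid. A direct change of variables yields the two scalings
\[
\F_{1+\te}\bigl(a\,\psi((\cdot-p)/\ell)\bigr)=a^{\te}\,\ell^{\,1-\te}\,\F_{1+\te}(\psi),\qquad |\mu[a\,\psi((\cdot-p)/\ell)]|(\R^2)=a\,|\mu[\psi]|(\R^2),
\]
so that the ratio of energy to total variation for a single rescaled roof is of order $(a/\ell)^{\te-1}$, which is small when $a\ll\ell$ and $\te>1$. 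The strategy is thus to pile up many flat pyramids so that the total variations form a divergent series while the energies remain summable.

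Concretely, choose $h_k:=1/k$ and $\ell_k:=k^{-3/4}$, so that $h_k\leq\ell_k$, $\sum_k\ell_k^2<\infty$, and for every $\te>1$,
\[
\sum_k h_k^{\te}\,\ell_k^{\,1-\te}=\sum_k k^{-\te/4-3/4}<\infty,\qquad \sum_k h_k=\sum_k 1/k=+\infty.
\]
Using $\sum_k\ell_k^2<\infty$, one can place pairwise disjoint squares $Q_k:=p_k+\ell_k[0,1]^2$ inside a fixed compact set, each enclosed in a buffer of width comparable to $\ell_k$ (an explicit dyadic packing suffices). Set $u_k(x):=h_k\,\psi((x-p_k)/\ell_k)$ and $u:=\sum_k u_k$. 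Since each $u_k$ is $(h_k/\ell_k)=k^{-1/4}$-Lipschitz and vanishes on $\pt Q_k$, the sum $u$ is $1$-Lipschitz and compactly supported.

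The conclusion about $\mu[u]$ is direct: disjointness of supports yields $\mu[u]=\sum_k\mu[u_k]$ as distributions, and testing against $\phi\in C^\infty_c$ chosen locally equal to $\pm 1$ on the positive/negative parts of each $\mu[u_k]$ gives $\langle\mu[u],\phi\rangle\geq c\sum_k h_k=+\infty$, so $\mu[u]$ is not a finite Radon measure. The main technical step will be the uniform-in-$\eps$ bound on $\E_{\eps,1+\te}(u)$. For $x$ in the bulk of some $Q_k$ (at distance $>|z|$ from $\pt Q_k$) the integrand coincides with the one built from $u_k$ alone, and the scaling identity together with the uniform bound $\E_{\cdot,1+\te}(\psi)\leq C$ gives the clean contribution $\sum_k h_k^{\te}\ell_k^{\,1-\te}$. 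The delicate cross-terms appear when $x$ lies in the buffer near $\pt Q_k$ while $x+z_1$ or $x+z_2$ enters $Q_k$: there $u(x)=0$ but $|u_k(x+z_l)|\leq (h_k/\ell_k)|z_l|$ because $u_k$ vanishes linearly at the boundary, so the integrand is bounded by $(h_k/\ell_k)^{\te}|z_1|^{\te_1}|z_2|^{\te_2}/|z|^{1+\te}$ and the shell of width $|z|$ around $\pt Q_k$ has area $\les\ell_k|z|$; integration reproduces a cross-term contribution $\les h_k^{\te}\ell_k^{\,1-\te}$ per $k$, of the same order as the bulk term. Summing, $\F_{1+\te}(u)\leq C\sum_k k^{-\te/4-3/4}<\infty$, and this uniform cross-term estimate is the only real obstacle of the proof.
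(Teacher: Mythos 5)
Your construction is essentially the paper's: superpose pairwise disjoint rescaled ``hat'' pyramids $h_k\,\psi((\cdot-p_k)/\ell_k)$ with $\sum_k h_k=\infty$, $\sum_k h_k^{\te}\ell_k^{1-\te}<\infty$, $\sum_k\ell_k^2<\infty$, and $(h_k/\ell_k)$ bounded. Your explicit choice $h_k=1/k$, $\ell_k=k^{-3/4}$ satisfies all the paper's structural conditions (in particular $(h_k/\ell_k)=k^{-1/4}$ is decreasing and $h_k^{\te}\ell_k^{-(1+\te)}\sum_{j\geq k}\ell_j^2\sim k^{(1-\te)/4}\to 0$), and the computation of $|\mu[u]|$ is fine.

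The gap is in the uniform-in-$\eps$ bound on $\E_{\eps,1+\te}(u)$. Your ``shell'' estimate splits, for each $k$, into a bulk term (handled by the single-bump scaling identity) and a cross-term on the shell of width $|z|$ around $\partial Q_k$, where you invoke $u(x)=0$ to bound $|Du(x,z_l)|\leq(h_k/\ell_k)|z_l|$. This is correct as long as $|z|$ is smaller than the buffer width $\sim\ell_k$. But for a fixed $\eps$, every bump with $\ell_k\lesssim\eps$ has its entire buffer swallowed by the interaction radius: a point $x$ with $x+z_l\in Q_k$ and $|z|\gtrsim\ell_k$ may well sit inside some other $Q_j$, so $u(x)\neq 0$ and your pointwise estimate on the integrand no longer holds. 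One cannot simply bound the integrand there by the global Lipschitz constant $1$ of $u$ and sum: doing so gives a contribution $\sim\sum_{\ell_k<\eps}\ell_k^2/\eps\sim(\eps)^{-1}\ell_{k_\eps}^{1/2}\sim k_\eps^{1/4}\to\infty$.

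What closes this is exactly the ingredient your sketch is missing: split the sum at the scale index $k_\eps$ for which $\ell_{k_\eps}\sim\eps$; treat the bumps with $\ell_k\gtrsim\eps$ individually (your bulk-plus-shell argument, which then is valid because $|z|<\eps\lesssim\ell_k$), and lump all smaller bumps into a single function $u_\eps$. For $u_\eps$ you must exploit that $(h_k/\ell_k)$ is \emph{decreasing}, so $u_\eps$ is $(h_{k_\eps}/\ell_{k_\eps})$-Lipschitz (a constant that decays as $\eps\dw 0$), and that the measure of its support is $\lesssim\sum_{j>k_\eps}\ell_j^2$; this yields $\E_{\eps,1+\te}(u_\eps)\lesssim (h_{k_\eps}/\ell_{k_\eps})^{\te}\eps^{-1}\sum_{j>k_\eps}\ell_j^2\to 0$. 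Without the decreasing property of $h_k/\ell_k$ and this separate remainder estimate, the uniform energy bound is not established, so as it stands the proposal is not a complete proof — it is a correct sketch of the paper's argument with the remainder term omitted.
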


 %Notice also that if for a function $v$ and a $p>p(\te_1,\te_2)$, $\F_{p}(v)<\infty$ then $\F( v)=0$ and
 %therefore a consequence of~\eqref{quanta} is that for $p>p(\te_1,\te_2)$, if $\F_{p}(v)<\infty$ then $\mu=0$ (and actually that $u=u_1$ or $u=u_2$, see Theorem~\ref{theo:zero} below). 

\subsection{Control of the distance of $u$ to $\U(\Om)$}
 
We then focus on case $(a)$ of~\eqref{Pintro} with the assumption $u\in L^\oo(\Om)$. We prove that the energy gives a quantitative control  on the distance of $u$ to $\U(\Om)$. We obtain the strongest result  in this direction for $n=2$.
\begin{theoremINTRO}[Theorem~\ref{BVestim1d}]\label{BVestim1dintro}
Assume that $n_1=n_2=1$ and $\te\le 1$. Then, for every $u\in L^\oo(\Om)$ with $\|u\|_\oo\le 1$ and $\F( u)<\oo$, there exists $\bar u\in \U(\Om)$  such that $u-\bar u\in BV(\Om)\cap L^\oo(\Om)$ with the estimate
 %\begin{multline}
 \begin{equation*}
 %\label{quantrig2dintro}
 \|u-\bar u\|_\oo+|\nb[u-\bar u]|(\Om)
 \les    \F( u)+\F( u)^{\frac{1}{2}}.
 %\les \lt( \|u\|_\oo^{1-\te}\F( u)+\|u\|_\oo^{\te_1+(1-\te)(1+\frac{1}{2}(\te_2-\te_1))}\F( u)^{\te_2+\frac{1}{2}(1-\te)}\lt.\\ +\|u\|_\oo^{\te_2+(1-\te)(1+\frac{1}{2}(\te_1-\te_2))}\F( u)^{\te_1+\frac{1}{2}(1-\te)}\rt). \end{multline}
 \end{equation*}
\end{theoremINTRO}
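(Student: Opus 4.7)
\medskip\noindent\textbf{Proof plan.} The starting point is Proposition~\ref{propquant:intro} in case~(a): since $\|u\|_\oo\le 1$ and $\te\le 1$, $\mu[u]=\pt_1\pt_2 u$ is a finite Radon measure on $\Om$ with
\[
|\mu[u]|(\Om)\les \F(u).
\]
From this I would deduce an additive decomposition $u=a(x_1)+b(x_2)+V(x_1,x_2)$ and then absorb whichever of $a,b$ is ``larger'' into $\bar u\in \U(\Om)$ so that the remainder lies in $L^\oo\cap BV$ with the stated estimate.

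\smallskip\noindent\emph{Step 1 (cumulative decomposition).} WLOG $\Om=(0,1)^2$. Define
\[
V(x_1,x_2):=\mu[u]\bigl([0,x_1]\times[0,x_2]\bigr).
\]
A Fubini computation gives $\pt_1\pt_2V=\mu[u]$ distributionally together with $\|V\|_\oo\le |\mu[u]|(\Om)\les\F(u)$ and $|\nb V|(\Om)\les|\mu[u]|(\Om)\les\F(u)$. Since $u-V$ has vanishing mixed derivative, in two dimensions this forces the splitting $u-V=a(x_1)+b(x_2)$. After normalizing $a,b$ to vanish at fixed base points, $\|u\|_\oo\le 1$ yields $\|a\|_\oo,\|b\|_\oo\les 1+\F(u)$.

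\smallskip\noindent\emph{Step 2 (decoupled lower bound for $w:=a+b$).} Because the $z_1$-difference of $w$ depends only on $a$ and the $z_2$-difference only on $b$, Fubini factorises the integrand of $\E_{\eps,2}(w)$,
\[
\E_{\eps,2}(w)=\int \rho_\eps(z)|z|^{-2}\Bigl(\int_{\Om_1^\eps}|a(x_1+z_1)-a(x_1)|^{\te_1}dx_1\Bigr)\Bigl(\int_{\Om_2^\eps}|b(x_2+z_2)-b(x_2)|^{\te_2}dx_2\Bigr)dz.
\]
A one-dimensional Brezis--Bourgain--Mironescu analysis applied separately to each factor (precisely the splitting argument the introduction says will be used ``when $u$ splits as $u_1(x_1)+u_2(x_2)$'') identifies, after the rescaling $z=\eps y$, the leading behaviour of each factor as $|z_l|$ times a one-dimensional $BV$-type seminorm, producing
\[
[a]_{\te_1}^{\te_1}\,[b]_{\te_2}^{\te_2}\les \F(w),
\]
where $[\cdot]_{\te_l}$ is a 1D seminorm that, under the $L^\oo$ control from Step~1, dominates the classical 1D $BV$-seminorm. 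Comparing $\F(u)$ and $\F(w)$ via $u=w+V$, using the subadditive inequality $|A+B|^{\te_l}\le|A|^{\te_l}+|B|^{\te_l}$ (valid since $\te_l\le 1$) together with $|\delta V|\les\|V\|_\oo\les \F(u)$ to control the cross terms, yields $\F(w)\les \F(u)$ up to lower-order corrections. Hence
\[
[a]_{\te_1}^{\te_1}\,[b]_{\te_2}^{\te_2}\les \F(u).
\]

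\smallskip\noindent\emph{Step 3 (choice of $\bar u$).} Combining the product bound of Step~2 with the a priori $L^\oo$ control on $a,b$ from Step~1 and the constraint $\te_1+\te_2=\te\le 1$, a direct interpolation gives
\[
\min\bigl(\|a\|_{BV(\Om_1)},\|b\|_{BV(\Om_2)}\bigr)\,\les\, \F(u)+\F(u)^{1/2};
\]
the two-regime right-hand side corresponds to $\F(u)\le 1$ (where $1/\te\ge 1$ makes $\F(u)^{1/\te}\le \F(u)\le \F(u)^{1/2}$ absorb the algebraic loss) and to $\F(u)\ge 1$ (where the $L^\oo$ interpolation enters). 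WLOG it is $\|b\|_{BV(\Om_2)}$ that is controlled; then $\bar u(x_1,x_2):=a(x_1)\in\U(\Om)$ and $u-\bar u=b+V$ satisfy
\[
\|u-\bar u\|_\oo+|\nb(u-\bar u)|(\Om)\les \|b\|_{BV}+\|b\|_\oo+\|V\|_\oo+|\nb V|(\Om)\les \F(u)+\F(u)^{1/2},
\]
which is the claim.

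\smallskip\noindent The main technical obstacle is Step~2: rigorously identifying the decoupled BBM-type lower bound at the critical exponent $p=2$ for $\te_l\le 1$, a regime which does not fit the classical Sobolev BBM framework and requires handling jump contributions directly; and pushing the comparison $\F(w)\les \F(u)$ through the decomposition $u=w+V$ with only subadditive (not triangle-type) inequalities available. The two-regime interpolation of Step~3 also requires careful bookkeeping to produce precisely the $\F+\F^{1/2}$ bound.
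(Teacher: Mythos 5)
Your skeleton mirrors the paper's in the early stages (both start from $|\mu[u]|(\Om)\les\F(u)$, define the cumulative function $V$, and split $u=a(x_1)+b(x_2)+V$), but Step~2 as written has a genuine gap, and it is precisely the obstacle you flag at the end.

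The issue is the direction and the scaling of the comparison $\F(w)\les\F(u)$. Writing $w=a+b=u-V$, the subadditive inequality $|Dw(x,z_l)|^{\te_l}\le|Du(x,z_l)|^{\te_l}+|DV(x,z_l)|^{\te_l}$ is indeed available, but the cross terms it produces do not stay bounded at the critical scaling. For instance, the term
\[
\int_{\R^2}\dfrac{\rho_\eps(z)}{|z|^2}\int_{\Om^\eps}|Du(x,z_1)|^{\te_1}\,|DV(x,z_2)|^{\te_2}\,dx\,dz
\]
can only be estimated using $\int_{\Om_2}|DV(x,z_2)|^{\te_2}\,dx_2\les|z_2|^{\te_2}$ (from $|DV(x,z_2)|\le|\mu|\bigl((0,\ell_1)\times(x_2,x_2+z_2]\bigr)$ and Jensen, since $\te_2<1$). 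After integrating against $\rho_\eps(z)/|z|^2$ this scales like $\eps^{\te_2-2}\to\infty$. There is no ``lower-order'' absorption: the cross terms diverge at the critical exponent $p=2$, so $\F(w)$ is not controlled by $\F(u)$ through this route. Also the BBM machinery you invoke for the factored quantity is designed for $\te_l\ge1$; at $\te_l<1$ there is no clean $BV$-seminorm identification, and indeed a finite factored energy does \emph{not} force both $a$ and $b$ to be $BV$, only (at most) one of them.

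The paper's proof sidesteps this entirely and never compares $\F(u)$ to $\F(a+b)$. It instead derives \emph{pointwise lower bounds} on the integrand of $\E_\eps(u)$ itself: since $Du(x,z_1)=Du_1(x_1,z_1)+\mu\bigl((x_1,x_1+z_1]\times(0,x_2]\bigr)$, one has
\[
|Du(x,z_1)|\ \ge\ \bigl[|Du_1(x_1,z_1)|-|\mu|\bigl((x_1,x_1+z_1]\times(0,\ell_2)\bigr)\bigr]_+\ =:\ \psi_1(x_1,z_1),
\]
a quantity depending only on $(x_1,z_1)$; similarly $\psi_2(x_2,z_2)$. Plugging $\psi_1^{\te_1}\psi_2^{\te_2}\le|Du(x,z_1)|^{\te_1}|Du(x,z_2)|^{\te_2}$ into $\E_\eps$ makes the integral factor without any energy transfer. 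A polar-coordinate extraction (as in the $q$-lemma) then gives a sequence along which the product of the two one-variable integrals is $\les\F(u)$, hence the smaller factor is $\les\F(u)^{1/2}$. Finally, $|Du_1(x_1,z_1)|\le|\mu|(\text{strip})+\psi_1(x_1,z_1)$ together with $\psi_1\le2$ (from $\|u\|_\oo\le1$, so $\psi_1\le2^{1-\te_1}\psi_1^{\te_1}$) gives $\int|Du_1|/r_k\,dx_1\les\F(u)+\F(u)^{1/2}$, whence $u_1\in BV$ with the stated bound — no two-regime interpolation. You should replace Step~2 by this lower-bound argument; without it the proof does not close.
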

%In the case $\te<1$,~\eqref{quantrig2dintro} may be improved: see~\eqref{introimproved} below. %\textcolor{red}{est ce qu on explique un peu la preuve?}
The idea of the proof is to first decompose $u$ as $u(x)=u_1(x_1)+u_2(x_2)+w(x)$ where $w$ satisfies $\partial_1\partial_2 w=\mu[u]=\partial_1\partial_2 u$ and $\|w\|_\infty+|\nabla w|(\Omega)\les \F(u)$.
Using this (and in particular the $L^\infty$ bound on $w$), we can quantify how much the integration with respect to $x_1$ and $x_2$ in the definition  \eqref{Eeps} of $\E_{\eps,2}(u)$ decouples. 
In higher dimension, the failure of the Sobolev embedding $BV(\Om_l) \subset L^\infty(\Om_l)$ makes the situation more complex (and in particular { the energy does not control the corresponding $w$ in $L^\infty$}) and we were not able to obtain a $BV$ estimate. Nevertheless, we have, 
\begin{theoremINTRO}[Theorem~\ref{coro:u-baru}]\label{thm:LpestimIntro}
Assume that $\hat n:=\max(n_1,n_2)\geq 2$, that $\Om_1$ and $\Om_2$ are bounded extension domains and that $\te\le1$. Let $u\in L^\oo(\Om)$ with $\|u\|_\oo\leq 1$ and $\F( u)<\oo$.
Then, there exists $\bar u\in L^\oo(\Om)\cap \U(\Om)$ such that
 \begin{equation*}%\label{estimbaruintro}
  \|u-\bar u\|_{L^{\frac{\hat{n}}{\hat{n}-1}}(\Om)} \les\, \F( u)+\F( u)^{\frac{1}{2}}. % \lt((\|u\|_{\oo}^{1-\te}\F( u)^{1/2} + \|u\|_{\oo}^{1-\te}\F( u)\rt) \lt(1+\|u\|_\oo\rt).
 \end{equation*}
\end{theoremINTRO}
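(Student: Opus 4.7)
The plan is to adapt the strategy underlying Theorem~\ref{BVestim1dintro} to higher dimension. Because $BV(\Om_l)\not\hookrightarrow L^\infty(\Om_l)$ as soon as $n_l\geq 2$, the ``remainder'' $w$ in the decomposition $u=v_1(x_1)+v_2(x_2)+w(x)$ can no longer be controlled in $L^\infty$, which forces the weaker Sobolev exponent $\hat n/(\hat n-1)$ stated in the theorem. I would construct $w$ from $\mu[u]=\nb_1\nb_2 u$, show that at least one of the $v_l$'s is close to a constant, and take $\bar u$ to be the other summand of $v_1+v_2$ plus that constant.

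\textbf{Step 1 (decomposition and control of $w$).} By Proposition~\ref{propquant:intro}~(a), $\mu[u]$ is a finite matrix-valued Radon measure with $|\mu[u]|(\Om)\les\F(u)$. Solving $\nb_1\nb_2 w=\mu[u]$ on $\Om$ with the zero slice-averages $\int_{\Om_1}w(\cdot,x_2)\,dx_1=\int_{\Om_2}w(x_1,\cdot)\,dx_2=0$ defines $w$ uniquely. The extension-domain hypothesis on $\Om_1,\Om_2$ allows the slicewise Sobolev embedding $BV(\Om_l)\hookrightarrow L^{n_l/(n_l-1)}(\Om_l)$ combined with a Poincar\'e inequality in the complementary block, yielding $\|w\|_{L^{\hat n/(\hat n-1)}(\Om)}\les\F(u)$. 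By construction $v:=u-w$ is annihilated by $\nb_1\nb_2$, hence $v(x)=v_1(x_1)+v_2(x_2)$ for some $v_l\in L(\Om_l)$.

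\textbf{Step 2 (dichotomy on $v_1$ versus $v_2$).} Substituting $u=v_1+v_2+w$ into $\E_{\eps,2}^{\te_1,\te_2}(u)$ and using the subadditivity $|a+b|^{\te_l}\les|a|^{\te_l}+|b|^{\te_l}$ (valid since $\te_l\leq\te\leq1$), the cross-terms involving $w$ can be absorbed using $\|u\|_\infty\leq 1$ and the $L^{\hat n/(\hat n-1)}$-bound on $w$. This leaves $\E_{\eps,2}^{\te_1,\te_2}(v_1+v_2)\les\F(u)+o_\eps(1)$; since translations of $v_1+v_2$ in $X_l$ only affect $v_l$, the integrand factorises as a product $[v_1]^{\te_1}_{z_1}[v_2]^{\te_2}_{z_2}$, where $[v_l]^{\te_l}_{z_l,U}:=\int_U|v_l(y+z_l)-v_l(y)|^{\te_l}\,dy$. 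The uniform bound $\|v_l\|_\infty\les1$ further lets me upgrade $[v_l]^{\te_l}_{z_l}\ges[v_l]^1_{z_l}$, yielding
\[
\int_{\R^n}\rho_\eps(z)\,|z|^{-2}\,[v_1]^{1}_{z_1,\Om_1^\eps}\,[v_2]^{1}_{z_2,\Om_2^\eps}\,dz\ \les\ \F(u)+o_\eps(1).
\]
A Bourgain--Brezis--Mironescu argument in each variable separately (the scaling $|z|\sim\eps$ pairs the factor $\eps^{-2}$ with $\eps^{n_l+1}$ coming from the $BV$-characterisation) then allows me to pass to $\liminf_{\eps\dw0}$ and deduce $|\nb v_1|(\Om_1)\cdot|\nb v_2|(\Om_2)\les\F(u)$, hence $\min_{\ell\in\{1,2\}}|\nb v_\ell|(\Om_\ell)\les\F(u)^{1/2}$.

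\textbf{Step 3 (assembly).} Sobolev embedding on $\Om_\ell$ provides a constant $c\in\R$ with $\|v_\ell-c\|_{L^{n_\ell/(n_\ell-1)}(\Om_\ell)}\les\F(u)^{1/2}$. Since $n_\ell\leq\hat n$ and $\Om_{3-\ell}$ is bounded, the inclusion $L^{n_\ell/(n_\ell-1)}(\Om_\ell)\hookrightarrow L^{\hat n/(\hat n-1)}(\Om_\ell)$ and tensoring with $\Om_{3-\ell}$ give $\|v_\ell-c\|_{L^{\hat n/(\hat n-1)}(\Om)}\les\F(u)^{1/2}$. Setting $\bar u(x):=v_{3-\ell}(x_{3-\ell})+c\in\U(\Om)$, the triangle inequality applied to $u-\bar u=(v_\ell-c)+w$ together with the bounds from Steps~1 and~2 yields the announced estimate. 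The main obstacle is Step~2: the $L^\infty$-bound on $w$ that made the cross-term absorption essentially cost-free in dimension two is no longer available, so one has to work only with the $L^{\hat n/(\hat n-1)}$-bound on $w$ together with $\|u\|_\infty\leq1$, and carefully balance the scales of integration to avoid spoiling the rate $\F(u)+\F(u)^{1/2}$.
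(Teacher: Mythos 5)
Your Step 1 (constructing $w$ by the zero-slice-average normalization, using Proposition~\ref{propquant:intro}~(a) to get $|\mu[u]|(\Om)\les\F(u)$, and establishing $\|w\|_{L^{\hat n/(\hat n-1)}(\Om)}+|\nb w|(\Om)\les\F(u)$ via the extension-domain hypothesis and Poincar\'e--Wirtinger) and your Step~3 assembly coincide with the paper's Proposition~\ref{prop:boundw} and the final lines of Proposition~\ref{thm:u-baru}. The problem is Step~2, which you yourself flag as the ``main obstacle'': as written, it does not go through.

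The difficulty is that the cross-term absorption cannot be done with only an $L^{\hat n/(\hat n-1)}$ bound on $w$ (nor with the trivial $\|w\|_\oo\les\|u\|_\oo$). After substituting $u_l=u-w-u_{3-l}$ and using $|Du_l|^{\te_l}\le|Du|^{\te_l}+|Dw|^{\te_l}$, one of the cross integrals is, say, $\int\rho_\eps(z)|z|^{-2}\int_{\Om^\eps}|Du(x,z_1)|^{\te_1}|Dw(x,z_2)|^{\te_2}\,dx\,dz$. Estimating $|Du|^{\te_1}\les1$ and $|Dw(x,z_2)|^{\te_2}\les1+|Dw(x,z_2)|$, then $\int_\Om|Dw(x,z_2)|\,dx\les|z_2|\,|\nabla_2 w|(\Om)$, one gets something of order $\eps^{-2}+\eps^{-1}|\nb w|(\Om)$, which diverges as $\eps\dw0$. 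The $L^\oo$ smallness of $w$ (available only in dimension two, via $w(x)=\mu((0,x_1]\times(0,x_2])$ and $|\mu|(\Om)\les\F(u)$) was exactly what killed these terms in Theorem~\ref{BVestim1dintro}; it is lost when $\hat n\ge2$. As a result the asserted bound $\int\rho_\eps|z|^{-2}[v_1]^1_{z_1}[v_2]^1_{z_2}\,dz\les\F(u)+o_\eps(1)$, and hence the clean product estimate $|\nb v_1|(\Om_1)\cdot|\nb v_2|(\Om_2)\les\F(u)$, are not substantiated; no apparent route via linear substitution of the decomposition gets you there.

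The paper avoids this by working selectively: it introduces the slicewise masses $\psi_l(x_l)\approx\int_{\Om_{3-l}}|\nb_{3-l}w(x_l+\cdot)|$, shows $\psi_l\in BV(\Om_l)$ with $\|\psi_l\|_{L^1}+|\nb\psi_l|(\Om_l)\les\F(u)$, and uses the coarea formula to pick a level $\lambda_l\approx\F(u)^{1/2}$ such that both the perimeter of the sublevel set $\widetilde\Om_l=\{\psi_l\le\lambda_l\}$ and the measure of its complement $\om_l$ are $\les\F(u)^{1/2}$. The cross terms are then estimated only on $\widetilde\Om_1\times\widetilde\Om_2$, where the Fubini integral of $|Dw|$ in one group of variables against the other is bounded by $\lambda_l\les\F(u)^{1/2}$, and the domain is further split into the ``good'' sets $A_l^k=\{|Du|\ge\frac12|Du_l|\}$ and their complements. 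This yields a bound on the truncation $\tilde u_l:=u_l\chi_{\widetilde\Om_l}$, not on $u_l$ itself, and the defect on $\om_l$ is recovered via the relative isoperimetric inequality. Your approach bypasses all of this truncation and dichotomy machinery, which is precisely what makes the higher-dimensional estimate work; without it, the cross-term control fails.
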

Notice that the $L^{\frac{\hat{n}}{\hat{n}-1}}$ norm (which comes from the embedding of  $BV(\Om_l)$ into $L^\frac{n_l}{n_l-1}(\Om_l)$) is stronger than the $L^{\frac{n}{n-1}}$ norm which would come from the embedding of $BV(\Om)$.

\subsection{Further results}
In a second paper~\cite{GM2}, we {will} focus on the case $\te\leq 1$ with $u\in L^\oo(\Om)$ and $\F(u)<\oo$ and study the structure of the defect measure $\mu=\mu[u]$ (which is  then a Radon measure).

 In dimension 2 (\textit{i.e.} $n_1=n_2=1$) we show that if $\te=1$, then $\mu$ concentrates on a set with Hausdorff dimension {at most} 1. Moreover, if  $u$ is Lipschitz continuous, then $\mu$ is absolutely continuous with respect to the Hausdorff measure $\h^1$ (and satisfies the differential inclusion~\eqref{diffinclu}). 
 On the contrary, if $\te<1$, we show that $\mu$ concentrates on a countable set: there exist sequences $(x_k)_{k\geq1}\subset\Om$ and $(m_k)_{k\geq 1}\subset\R$ such that $\sum |m_k|^\te\les \F_2(u)$ and $\mu=\sum m_k\delta_{x_k}$.
Moreover, if $u$ is a characteristic function, then $m_k\in\pm \{1,2\}$ for every $k\geq1$. As a consequence of the estimate  $\sum |m_k|^\te\les \F_2(u)$ 
 there exists some $\eta>0$ such that $\F_2(u)<\eta$ implies $\mu=0$, which in turn leads to $u\in S(\Om)$. This improves Theorem~\ref{theointro:zero} and Theorem~\ref{BVestim1dintro} in this particular case.

In higher dimensions, we assume $\te<1$.  Using tools from Geometric Measure Theory (mainly the rectifiability criterion for flat chains of White~\cite{White1999}), we prove that $\mu$ is a $(n-2)$-rectifiable measure with a tensor structure: for $l=1,2$, there exist $\Sigma_l\subset X_l$
 $(n_l-1)-$rectifiable and  a Borel function $m:\Sigma_1+\Sigma_2\to\R$ such that $\mu=m \nu_1\otimes \nu_2\h^{n-2}\restr (\Sigma_1+\Sigma_2)$, where { for $l=1,2$, $\nu_l\in X_l$   is a normal} to $\Sigma_l$.
This gives a relatively good understanding of the case where the typical function $u$ with $\F(u)<\infty$ is a ``corner'' (recall Proposition~\ref{remintro:optimal}).\medskip

In the case  $\te=1$, in order to distinguish between ``corners'' and ``roofs'' (see Proposition~\ref{remintro:optimal}), one needs to impose more regularity on $u$. To understand this better,
 we plan to investigate in a future work,   the set of Lipschitz continuous functions  which satisfy 
  the differential inclusion~\eqref{diffinclu} and are such that $\mu[u]=\nabla_1\nabla_2u $ is a Radon measure. 
%In summary:
% \be\label{setGM3}
% \nb u(x)\in [-1,1]\times\{0\} \cup \{0\}\times[-1,1]\text{ almost everywhere in }\Om\ \text{ and }\ \pt_1\pt_2u\in \mathcal{M}(\Om).
% \ee
% These assumptions fit the result of~\cite{GM2} where $u$ is Lipschitz continuous with $\F_2(u)<\oo$, $\te=1$. We precise the result of~\cite{GM2} by showing that $\mu[u]$ is a 1-rectifiable measure, {\red \dots, traces of $u$, l..s.c. energies, limit model by homogenization \dots ?}

\subsection{Conventions and notation}
In all the paper, we consider $\te_1,\te_2>0$ and note $\te=\te_1+\te_2$ their sum. For $x,z\in \R^n$, and $u:\Om\to\R$ we note 
\[
Du(x,z):=u(x+z)-u(x).
\]  
In particular, the expression~\eqref{Eeps} of $\E_{\eps,p}^{\te_1,\te_2}(u)$ simplifies to
\[
\E_{\eps,p}^{\te_1,\te_2}(u)=\int_{\R^n}\rho_\eps(z) \int_{\Omega^\eps} \dfrac{|D u(x,z_1)|^{\te_1} \, |Du(x,z_2)|^{\te_2}}{|z|^p} \,dx \,dz.
\]

As already said, except at two points (case (c) in the proofs of Proposition~\ref{propquant:intro} and Theorem~\ref{theointro:zero}),
we omit the superscript $\te_1,\te_2$ by writing $\E_{\eps,p}$ for $\E_{\eps,p}^{\te_1,\te_2}$ and $\F_{p}$ for $\F_{p}^{\te_1,\te_2}$. { When $p=P(\te_1,\te_2)$ as defined in~\eqref{Pintro},} we simply write $\F$ for $\F_p$. 

If $v\in L(\Om)$ and $U=U_1+U_2$ with $U_l\subset \Om_l$ a subdomain of $\Om_l$ for $l=1,2$,  we note,
\[
\E_{\eps,p}(v;U),\text{ for }\eps>0,\quad\text{and}\quad \F_{p}(v;U)
\]
the energies of the restriction $v_{|U}$.

%\[
%V=\Span G=\{ u_1(x_1)+u_2(x_2):\ u_1:\Om_1\to\R, \ u_2:\Om_2\to\R \text{ measurable functions}\}, 
%\]

In the sequel $(e_1,\cdots, e_{n_1})$ (respectively $(f_1,\cdots, f_{n_2})$) denotes an orthonormal basis of $X_1$ (respectively of $X_2$).

Given $x\in\R^n$, we note $x_l$  its orthogonal projection on $X_l$, for $l=1,2$, so that $x=x_1+x_2$.  Similarly, given a function $\varphi \in C^\oo_c(\Om)$ and $x\in \Om$, we note $\nb_1 \vhi(x) +\nb_2\vhi(x)$ the decomposition of $\nb \vhi(x)$ in $X_1+X_2$. By duality, the distributional derivative of a function  $u\in L^1_{loc}(\Om)$ decomposes as $\nb u=\nb_1 u+\nb_2 u$, where $\nb_l u $ is a distribution on $\Om$ with values into $X_l$.

{ For $r>0$ and $x\in \R^n$, $B_r(x)$ denotes the open ball in $\R^n$ with center $x$ and radius $r$. If $x=0$, we simply write $B_r$. For $l=1,2$, we note $B^{X_l}_r(x_l)$ the open ball in $X_l$ with center $x_l$ and radius $r$. }

We use standard notation for the function spaces ($L^1_{loc}(\Om)$, $L^q(\Om)$, $L^\oo(\Om)$, $Lip(\Om)$, $BV(\Om)$).

For $a\in \R$, we note $a_+:=\max(a,0)$ its non-negative part.

%We note $\h^1$ the 1-dimensional Hausdorff measure and 
If $A\subset \R^n$ is a measurable subset, we note $|A|$ its volume and $\h^k(A)$ its $k-$dimensional Hausdorff measure.

We note $\smint_\Om u(x) \, dx =|\Om|^{-1}\int_\Om u$ the mean value of a function $u$ over $\Om$.

Eventually, by convention, $a\les b$ means that there exists a non-negative constant $C$ which may only depend on $\te_1,\te_2$, $n$, $\Omega$ or on the kernel $\rho$ such that $a\leq C b$.

% We let $\mathcal{M}_{n_1,n_2}$ be the set of matrices of size $n_1\times n_2$.

\subsection{Outline of the paper}
In the first section, we prove Proposition~\ref{C1:intro} and we build all the examples and counter-examples introduced above. In Section~\ref{S2}, we consider the zero-energy case and prove Theorem~\ref{theointro:zero} (and all the results from Proposition~\ref{Lip:intro} to~\ref{propCNTEX:intro}).  Section~\ref{S3} is dedicated to the quantitative control of the distance of $u$ to $\U(\Om)$ in terms of $\F( u)$.% In Section~\ref{S4} we restrict the study to the cases $\te\le 1$ and $n_1=n_2=1$ and we study the structure of the defect measure $\mu[u]$ when $\F( u)<\oo$. The higher dimensional case is treated in Section~\ref{S5}. {\red Appendix ??}

\section{The case of $C^1$-functions and counter-examples}\label{Sexamples}
We first settle Question~\eqref{question} in the setting of $C^1$-functions.
\begin{proposition}[Proposition~\ref{C1:intro}]\label{prop:C1} ~
\begin{itemize}
\item[(i)] For every $u\in \Lip(\Om)$, $\F_{\te}(u)<\oo$ (and therefore $\F_{p}(u)=0$ for $p<\te$);
\item[(ii)] For $u\in C^1(\ov {\Om})$, we have  $\F_{\te}(u)=0 \Longrightarrow u\in \U(\Om)$. 
\end{itemize}
\end{proposition}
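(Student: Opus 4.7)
The plan is to handle the two parts separately.

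For part~(i), a trivial pointwise domination does the job: with $L:=\Lip(u)$, the bound $|Du(x,z_l)|\le L|z_l|\le L|z|$ gives
\[
\dfrac{|Du(x,z_1)|^{\te_1}|Du(x,z_2)|^{\te_2}}{|z|^{\te}}\;\le\;L^{\te}\dfrac{|z_1|^{\te_1}|z_2|^{\te_2}}{|z|^{\te}}\;\le\;L^{\te},
\]
so integrating against $\rho_\eps(z)\,dz$ and over $\Om^\eps$ yields $\E_{\eps,\te}(u)\le L^{\te}|\Om|$ uniformly in $\eps$. Taking the $\liminf$ gives $\F_{\te}(u)<\infty$, and Remark~\ref{remark1} delivers $\F_p(u)=0$ for every $p<\te$.

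For part~(ii), my plan is to first compute the exact limit of $\E_{\eps,\te}(u)$. Rescaling $z=\eps w$, using the $C^1$ expansion $u(x+\eps w_l)-u(x)=\eps\,\nb_l u(x)\cd w_l+o(\eps)$ (uniform on $\ov\Om$), and invoking dominated convergence with the Lipschitz bound of part~(i) as dominator, I would obtain
\[
\F_{\te}(u)\;=\;\int_\Om\int_{\R^n}\rho(w)\,\dfrac{|\nb_1 u(x)\cd w_1|^{\te_1}\,|\nb_2 u(x)\cd w_2|^{\te_2}}{|w|^{\te}}\,dw\,dx.
\]
The assumption $\F_{\te}(u)=0$ then forces the inner integrand to vanish Lebesgue-a.e.\ in $w$ for a.e.\ $x\in\Om$. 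Since the function $w\mapsto|\nb_1u(x)\cd w_1|^{\te_1}|\nb_2u(x)\cd w_2|^{\te_2}$ vanishes only on the union of two hyperplanes of $\R^n$ when both $\nb_l u(x)$ are nonzero --- a negligible set that cannot carry the nontrivial measure $\rho(w)\,dw$ --- one concludes $\nb_1 u(x)=0$ or $\nb_2 u(x)=0$; continuity of $\nb u$ promotes this to the pointwise statement of the differential inclusion~\eqref{diffinclu} on $\ov\Om$.

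It remains to deduce $u\in\U(\Om)$ from the pointwise inclusion $\nb u\in X_1\cup X_2$. The open set $\Om^\star:=\{\nb u\ne 0\}$ carries a continuous map $\nb u$ into the disconnected target $(X_1\sm\{0\})\sqcup(X_2\sm\{0\})$, so each connected component of $\Om^\star$ is classified by a well-defined ``type'' $l\in\{1,2\}$ according to whether $\nb u$ takes values in $X_l$ on it. If $\Om^\star=\void$ or if every component is of type~$1$, then $\nb_2 u\equiv 0$ on $\Om$ and the connectedness of $\Om_2$ yields $u\in\U(\Om)$; the type~$2$ case is symmetric. The main obstacle is therefore to exclude the coexistence of both types. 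Any interface between a type-$1$ component $C_1$ and a type-$2$ component $C_2$ must lie in the closed critical set $\{\nb u=0\}$, and $C^1$ regularity forces $\nb_l u\to 0$ when approaching such an interface from inside $C_l$; combining this matching with the product geometry $\Om=\Om_1+\Om_2$ and the connectedness of each factor --- via slicing along fibers $\Om_1+\{x_2\}$ and $\{x_1\}+\Om_2$ --- should force the value of $u$ on $C_2$ to be locked to the constant reached on $\partial C_2$, contradicting $\nb_2 u\ne 0$ there. The Lipschitz and $L^\infty$ counter-examples of Propositions~\ref{remintro:roofoptimal}--\ref{remintro:corneroptimal} confirm that this rigidity is a genuinely $C^1$ phenomenon.
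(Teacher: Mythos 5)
Your part (i) is the same as the paper's, and your derivation of the differential inclusion $\nb u(x)\in X_1\cup X_2$ from the limit formula for $\F_\te(u)$ is also essentially identical to the paper's (Taylor expansion, dominated convergence with the Lipschitz dominator, then observing that the $w$-integrand of the limit can only vanish a.e.\ if $\nb_1u(x)=0$ or $\nb_2u(x)=0$). Up to that point the proof is correct.

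The gap is in the final rigidity step, where you must pass from the pointwise inclusion $\nb u\in X_1\cup X_2$ to $u\in\U(\Om)$. Your plan --- classify each connected component of $\Om^\star=\{\nb u\ne 0\}$ by its type and rule out coexistence via a heuristic about ``locking'' $u$ on $C_2$ to the constant reached on $\partial C_2$ --- does not go through as stated. First, $C_2$ is not a product set; $u$ need not approach a single constant on $\partial C_2$, only $\nb u$ vanishes there, so there is no obvious ``constant reached on $\partial C_2$'' to speak of. Second, the contradiction you want is not about boundary values of $u$ at all; it has to come from producing a single point at which \emph{both} $\nb_1 u$ and $\nb_2 u$ are nonzero. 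The paper does exactly this with a clean open-and-closed argument on a \emph{slice}: if $\nb_1 u(x^0)=w_1\neq 0$, one shows that the connected component of $\{y_2\in\Om_2:\nb_1 u(x_1^0+y_2)=w_1\}$ containing $x_2^0$ is both open and closed in $\Om_2$ (openness because near any such point $\nb_1u\neq 0$ on a full neighborhood, forcing $\nb_2u\equiv0$ there and hence $\nb_1 u$ locally independent of $x_2$). By connectedness of $\Om_2$ this component is all of $\Om_2$, so $\nb_1u\neq0$ along the entire fiber $\{x_1^0\}+\Om_2$. Symmetrically, a point $\tilde x^0$ with $\nb_2u(\tilde x^0)\neq 0$ forces $\nb_2 u\neq0$ on the whole fiber $\Om_1+\{\tilde x_2^0\}$. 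The two fibers meet at $x_1^0+\tilde x_2^0$, where both partial gradients are then nonzero --- the desired contradiction. You mention ``slicing along fibers'' as a possibility but never carry out this argument; as written, the last step of part (ii) is incomplete.
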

\begin{proof}
(i) Let $u\in Lip(\Om)$ with Lipschitz constant $\lambda$. We have 
\[
\E_{\eps,\te}(u)\leq \lambda^\te \int_{\Omega^\eps} \lt[\int_{\R^n}\dfrac{|z_1|^{\te_1} \, |z_2|^{\te_2}}{|z|^\te} \rho_\eps(z) \, dz \rt] \,dx.
\]
Using the change of variable $z=\eps \tilde z$ and sending $\eps$ to 0, we get
\[
\F_{\te}(u)\leq \lambda^\te |\Om| \lt(\int_{\R^n} \dfrac{|z_1|^{\te_1} \, |z_2|^{\te_2}}{|z|^\te} \rho(z) \, dz\rt) ,
\]
which proves (i).\medskip
 
(ii) Let $u\in C^1(\ov \Om)$. Plugging the relation $u(x+z_l)-u(x)=\nabla_l u(x)\cdot z_l+o(|z_l|)$ into the definition of $\E_{\eps,\te}^{\te_1,\te_2}(u)$, we observe that
\[
\E_{\eps,\te}^{\te_1,\te_2}(u)= \int_{\Omega^\eps} \lt[\int_{\R^n}\lt( \dfrac{|\nb_1 u(x)\cdot z_1|^{\te_1} \, |\nb_2 u(x)\cdot z_2|^{\te_2}}{|z|^\te} +o(|z|) \rt)\rho_\eps(z) \, dz \rt] \,dx.
\]
Sending $\eps$ to 0, we have
\be\label{introC1_1}
\F_{\te}^{\te_1,\te_2}(u)=\lt(\int_{\R^n} \dfrac{|v_1\cdot z_1|^{\te_1}\, |v_2\cdot z_2|^{\te_2}}{|z|^{\te}}\rho(z)\, dz\rt)\int_\Om |\nb_1 u(x)|^{\te_1} |\nb_2 u(x)|^{\te_2}\, dx,
\ee
where $v_j\in X_l$ are (arbitrary)  unit vectors.\\
Hence, if $\F_{\te}^{\te_1,\te_2}(u)=0$, \eqref{introC1_1} implies that $|\nb_1 u(x)||\nb_2u(x)|=0$ in $\Om$. \medskip

Next, suppose that at some point $x^0\in\Om$, $w_1:=\nb_1 u(x^0)\neq0$. Let $C_2$ be the connected component of the set 
\[
\{y_2\in \Om_2 : \nb_1 u(x_1^0+y_2) =w_1\}
\]
which contains $x_2^0$. By continuity of $\nb_1 u$, $C_2$ is closed in $\Om_2$. Let us show that it is also open. Let $y_2\in C_2$,  by continuity of $\nb_1 u$ we have $\nb_1 u\not\equiv 0$ in some neighborhood $N_\eps(x^0_1+y_2):=B^{X_1}_\eps(x^0_1)+B^{X_2}_\eps(y_2)$. Since $|\nb_1 u||\nb_2u|$ vanishes, we have $\nb_2u\equiv 0$ in $N_\eps (x^0_1+y_2)$. In particular, $\nb_1 u(x_1^0+y'_2)=\nb_1 u(x_1^0+y_2)=w_1$ for every $y'_2\in B^{X_2}_\eps(y_2)$. We conclude that $B^{X_2}_\eps(y_2)\subset C_2$ which proves that $C_2$ is relatively open in $\Om_2$. Finally, the open set  $\Om_2$ being connected, we have $C_2=\Om_2$ and we conclude that $\nb_1 u(x^0_1+y_2) \neq 0$ for every $y_2\in \Om_2$. 

Assuming by contradiction that there exists also some point $\tilde x^0\in \Om$ with $\nb_2 u(\tilde x^0)\neq0$ we obtain similarly that $\nb_2 u(y_1+\tilde x^0_2) \neq 0$ for every $y_1\in \Om_1$. Choosing $y_1=x^0_1$ and $y_2=\tilde x^0_2$ we obtain at the point $x=x^0_1+\tilde x^0_2$, the contradiction 
\[
\nb_1 u(x)\neq0\text{ and }\nb_2 u(x)\neq0.
\]
We conclude that   $\nb_lu\equiv 0$ for $l=1$ or $l=2$: in short, $u\in \U(\Om)$.  This proves (ii).
\end{proof}
We now, give lower-bounds for $p$ in Question~\eqref{question} in the setting of Lipschitz continuous and of bounded functions.
\begin{proposition}[Proposition~\ref{remintro:optimal}]\label{rem:optimal}~
\begin{itemize}
\item[(i)] There exists  $u\in Lip(\Om)\bks \U(\Om)$ with $0<\F_{1+\te}(u)<\oo$ and $\mu[u]\neq 0$;% (typically, $u$ is a ``roof'' function).
\item[(ii)] There exists  $u\in L^\oo(\Om)\bks \U(\Om)$ with $0<\F_{2}(u)<\oo$  and $\mu[u]\neq 0$.% (typically,  $u$ is ``corner'' function).
\end{itemize}
\end{proposition}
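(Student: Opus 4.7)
The plan is to exhibit two explicit functions and compute directly both $\F_p(u)$ and the defect measure $\mu[u]$ in each case. I describe the model situation $n_1 = n_2 = 1$; the general case follows by tensoring with a separable smooth cutoff in the extra coordinates, which contributes only bounded multiplicative constants to every estimate.

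For part (i), pick $a = (a_1, a_2) \in \Om$ with $B_r(a) \subset \Om$, and set
\[
u(x) := \min\bigl((x_1 - a_1)_+,\, (x_2 - a_2)_+\bigr).
\]
Then $u$ is $1$-Lipschitz, depends genuinely on both variables on $\{x_l > a_l\} \cap \Om$, so $u \in \Lip(\Om) \setminus \U(\Om)$. A direct distributional computation gives $\nb_1 u = \un_{a_1 < x_1,\, x_1 - a_1 < x_2 - a_2}$, whence $\mu[u] = \nb_2\nb_1 u = \h^1 \restr \{x_1 - a_1 = x_2 - a_2 > 0\}$ on $\Om$, so $\mu[u] \neq 0$. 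The key energy observation is that on $\{x_1 > a_1,\ x_2 > a_2\}$ the function $u$ is independent of $x_l$ on one side of the diagonal: writing $s := |(x_1 - a_1) - (x_2 - a_2)|$, for $z_2 < 0$ on $\{x_1 - a_1 < x_2 - a_2\}$ one has $|Du(x, z_2)| = (|z_2| - s)_+ \leq |z_2|$, while $|Du(x, z_1)| \leq |z_1|$ always. Integrating first in the transverse coordinate (of range $|z_2|$) and adding a lower-order contribution from the vertex neighborhood $B_{|z|}(a)$ (of $x$-measure $\les |z|^2$) gives
\[
\int_\Om |Du(x, z_1)|^{\te_1}|Du(x, z_2)|^{\te_2}\, dx \;\les\; |z_1|^{\te_1}|z_2|^{\te_2 + 1} + |z_1|^{\te_1 + 1}|z_2|^{\te_2} + |z_1|^{\te_1}|z_2|^{\te_2}|z|^{2}.
\]
Rescaling $z = \eps\tilde z$ in $\E_{\eps, 1 + \te}(u)$, each term carries a non-negative power of $\eps$ at $p = 1 + \te$, so $\F_{1 + \te}(u) < \infty$. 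The matching lower bound follows by restricting to $\{s < \eps/4\} \cap \{z_1, z_2 \in (-\eps, -\eps/2)\}$ localized in the bulk of the diagonal away from $a$, where each factor is $\ges \eps^{\te_l}$ and the $x$-measure is $\ges \eps$, yielding $\E_{\eps, 1 + \te}(u) \ges 1$.

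For part (ii), pick $a \in \Om$ with $B_r(a) \subset \Om$, set $R := a + (0, L)^2$ with $L < r$, and take $u := \un_R$. Then $u \in L^\oo(\Om) \setminus \U(\Om)$, and $\mu[u] = \nb_1\nb_2 u = (\delta_{a_1} - \delta_{a_1 + L}) \otimes (\delta_{a_2} - \delta_{a_2 + L})$, a sum of four signed Dirac masses at the corners of $R$, so $\mu[u] \neq 0$. Since $u$ is $\{0,1\}$-valued one has $|Du(x, z_l)|^{\te_l} = |Du(x, z_l)|$; moreover $|Du(x, z_1)| = 1$ only if $x_2 \in a_2 + (0, L)$ and $x_1$ lies in a tube of thickness $\les |z_1|$ around the $X_1$-edges of $R$, and symmetrically for $|Du(x, z_2)|$. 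Both conditions together force $x$ into a neighborhood of the four corners of $R$ of measure $\les |z_1|\,|z_2|$, so
\[
\int_\Om |Du(x, z_1)|^{\te_1}|Du(x, z_2)|^{\te_2}\, dx \;\les\; |z_1|\,|z_2|,
\]
and rescaling with $p = 2$ gives $\F_2(u) < \infty$. The matching lower bound follows by localizing $x$ near one corner at scale $\eps/2$ and $z$ with $|z_1|, |z_2| \sim \eps$ of appropriate signs, on which both factors are identically $1$.

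The main delicate point in part (i) is the bookkeeping of the contribution to $\int_\Om |Du(x, z_1)|^{\te_1}|Du(x, z_2)|^{\te_2}\, dx$ from $x$ near the vertex $a$ of the half-diagonal, where the piecewise structure of $u$ (at the transition from $u \equiv 0$ to $u = \min(x_1 - a_1, x_2 - a_2)$) produces cross-derivatives in directions different from the diagonal. The estimate above shows that this contribution rescales with a strictly positive power of $\eps$ at $p = 1 + \te$ and is thus harmless, as is every other crossing of the boundary of the "positivity quadrant". All other ingredients are routine changes of variables.
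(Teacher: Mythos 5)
Your proof follows the same overall strategy as the paper's: for (i) a ``roof'' function $\min(\cdot,\cdot)$, for (ii) the indicator of a rectangle, followed by an analysis showing both factors in the integrand can be nonzero only near a one-dimensional (resp.\ zero-dimensional) singular set. The main difference is that you place the singularities in the interior of $\Om$ (a vertex of the roof at an interior point $a$, all four corners of $R$ interior), whereas the paper avoids an interior vertex in (i) by taking $u(x)=\min(x_1,x_2)$ on $\Om=(0,1)^2$ so that the apex sits at a corner of the domain, and in (ii) keeps only one corner interior. Your version therefore requires the extra bookkeeping near the vertex that you flag, and you correctly observe that this contributes $\les |z_1|^{\te_1}|z_2|^{\te_2}|z|^2$, picking up a positive power of $\eps$ at $p=1+\te$ and hence vanishing in the limit; the paper's placement makes the computation exact ($\E_{\eps,1+\te}(u)=c_1-\eps c_2$, $\F_2(u)=1/(4\pi)$), but both lines of argument establish $0<\F<\oo$.

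One point in the lower bound of (i) needs repair. You restrict to $\{z_1,z_2\in(-\eps,-\eps/2)\}$, but on that rectangle $|z|\in(\eps/\sqrt2,\eps\sqrt2)$, and nothing in the standing assumptions on $\rho$ (radial, nonnegative, $L^1$, $\supp\rho\subset B_1$, $\int\rho=1$) forces the radial profile $\tilde\rho$ to be positive anywhere in $(1/\sqrt2,1)$: $\rho$ could well be supported in $B_{1/2}$, in which case $\rho_\eps\equiv 0$ on your $z$-set and the bound degenerates. The fix is standard: since $\int_0^1\tilde\rho(r)r^{n-1}\,dr>0$, the set $\{r\in(0,1):\tilde\rho(r)>0\}$ has positive measure, and for any such $r$ you may take $z=\eps r\sigma$ with $\sigma$ in the third quadrant of $\pt B_1$ and $|\sigma_1|,|\sigma_2|\ge 1/2$, then run the same argument with $|z_l|\ges \eps r$, $s<\eps r/8$, and the diagonal localization at scale $\eps r$. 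Integrating in $r$ against $\tilde\rho(r)r^{n-1}$ and in the angular sector then gives $\E_{\eps,1+\te}(u)\ges 1$ uniformly in $\eps$. The analogous caveat applies to your lower bound in (ii), and is fixed the same way. Note also that the paper's choice sidesteps this entirely, since its positivity argument only needs $\rho$ to have mass on a half-space of $z$, which holds automatically by radial symmetry.
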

\begin{proof}
(i) 
Let us  assume that  $n_1=n_2=1$ and let us introduce the ``roof'' function defined on $\R^2$  by $v(x_1,x_2):=\min(x_1,x_2)$. We have $\partial_1\partial_2 v=(1/\sqrt2) \h^1\restr \{x_1=x_2\}$. We now consider $\Om=(0,1)^2$ and $u(x):=v(x)$ in $\Om$. We have $\mu[u]=(1/\sqrt2) \h^1\restr J$, with $J=\{(t,t):t\in (0,1)\}$ and $\mu[u]$ is a Radon measure with
\[
0< |\mu[u]|(\Om)<\oo.
\]

Next, using the changes of variable $x=\eps x'$, $z=\eps z'$ and the 1-homogeneity of $u$, we obtain for $\eps>0$,
\[
\E_{\eps,1+\te}(u)=\eps \,\E_{1,1+\te}\lt(v;\frac1\eps \Om\rt).
\]
Let us set $w(s):=\min(s,0)$. We have 
\[
v(x_1+z_1,x_2)-v(x)=D w(x_1-x_2,z_1),\qquad v(x_1,x_2+z_2)-v(x)=D w(x_2-x_1,z_2)
\]
and thus
\be\label{Eepsv}
\E_{1,1+\te}\lt(v;\frac1\eps \Om\rt) =\int_{\lt[\frac1\eps \Om\rt]^1}  \lt[\int_{\R^2}\rho(z) \dfrac{|D w(x_1-x_2,z_1)|^{\te_1}\, |D w(x_2-x_1,z_2)|^{\te_2}} {|z|^{1+\te}} \,dz\rt]\, dx.
\ee
We claim that $\E_{1,1+\te}\lt(v;\frac1\eps \Om\rt) =c_1/\eps-c_2$ for some $c_1,c_2>0$. The idea is that since $w$ is constant in $\R_+$,
the integrand vanishes when $|z|\leq |x_1-x_2|$ and since $\supp \rho\subset B_1$, the integrand vanishes away from the $\sqrt{2}/2$-neighborhood of the diagonal segment $(1/\eps)J$.
Since this segment has length $\sqrt{2}/\eps$, we expect $\E_{1,1+\te}\lt(v;\frac1\eps \Om\rt) $ to be of order $1/\eps$ and therefore, $\E_{\eps,1+\te}(u)$ to be of order 1.

Let us note $f(x_1-x_2)$ the integral inside the brackets in~\eqref{Eepsv}. The function $f$ takes values in $[0,\infty]$,  is measurable and even. We first 
show that $f$ is integrable on $(-1,1)$. For $s\geq 0$ and $z\in \R^2$, we have 
\be\label{|Dw|}
|Dw(s,z_1)|^{\te_1}\,|Dw(-s,z_2)|^{\te_2}=
\begin{cases} 
|s+z_1|^{\te_1} |z_2|^{\te_2} &\text{ if }z_2\leq s\leq -z_1,\smallskip \\
|s+z_1|^{\te_1} s^{\te_2} &\text{ if }0<s<\min(z_2,-z_1),\smallskip \\
\qquad 0&\text{ in the other cases.}
\end{cases}
\ee
Integrating in $s$ over $(0,1)$, we deduce 
\[
\int_0^1 |Dw(s,z_1)|^{\te_1}\,|Dw(-s,z_2)|^{\te_2}\, ds\les |z|^{1+\te}.
\]
%In the range $0<s<z':=\min(z_2,-z_1)$, we have, 
%\[\int_0^{z'} |Dw(-s,-z_1)|^{\te_1}\,|Dw(s,-z_2)|^{\te_2}\, ds\leq  (|z_1|+z_2)^{\te_1} \int_0^{z_2} s^{\te_2}\, ds\, =\,  (|z_1|+z_2)^{\te_1} \dfrac{z_2^{\te_2}}{1+\te_2} \les |z|^{1+\te}.
%\]
%In the range $z_2\leq s\leq -z_1$, noting $I=(\max(0,z_2),-z_1)$, we have similarly,
%\[
%\int_I |Dw(-s,-z_1)|^{\te_1}\,|Dw(s,-z_2)|^{\te_2}\, ds\leq  |z_2|^{\te_2} \int_{z_2}^{-z_1} (-s-z_1)^{\te_2}\, ds\,  \les |z|^{1+\te}.
%\]
Using Fubini and the symmetries of the problem and the above estimate, we get,
\[
\int_{-1}^1 f(s) ds\, =2\int_{\R^2}\dfrac{\rho(z)}{|z|^{1+\te}} \lt[\int_0^1|Dw(s,z_1)|^{\te_1}\,|Dw(-s,z_2)|^{\te_2} \,ds\rt] \,dz \les \int_{\R^2}\rho(z)\, dz =  1.
\]
Therefore, $f\in L^1(-1,1)$. We also observe from~\eqref{|Dw|} that the integral is strictly positive. In summary,
\be\label{c1=intf}
c_1:=\int_{-1}^1 f(s) ds\, \in (0,\infty).
\ee
 Let us return to~\eqref{Eepsv}. Since $\supp\rho\subset B_1$, the identity~\eqref{|Dw|} shows that we can reduce the integration with respect to  $x$ to the set 
\[
A_\eps:=\lt\{(x_1,x_2)\in \R^2 :1<x_1,x_2<1/\eps-1,\ |x_1-x_2|<1\rt\}.
\]
Let us introduce the following rectangle which contains $A_\eps$, 
\[
R_\eps:=\lt\{x\in\R^2;0<x_1+x_2<2/\eps,\, |x_1-x_2|<1\rt\}
\]
We also set $C_\eps:=R_\eps\bks\ov A_\eps$, see Figure~\ref{figReps}.
\begin{figure}[H]
\centering
\begin{tikzpicture}[scale=.8]
\draw (0,0) rectangle (10,10);
\draw (1.6,9.4)node{$\frac1\eps\Om$};
\draw (-.3,-.3)node{$0$};
\draw (0,.98)node{---};
\draw (-.5,1)node{$1$};
\draw (1,0)node{$|$}; 
\draw (1,-.5)node{$1$};
\draw (0,8.98)node{---};
\draw (-.8,9)node{$\frac1\eps-1$};
\draw (0,9.98)node{---};
\draw (-.5,10)node{$\frac1\eps$};
\draw (9,0)node{$|$}; 
\draw (9,-.5)node{$\frac1\eps-1$};
\draw (10,0)node{$|$}; 
\draw (10,-.5)node{$\frac1\eps$};
\draw[very thin,dashed,color=gray]  (1,0) -- (1,10);
\draw[very thin,dashed,color=gray]  (9,0) -- (9,10);
\draw[very thin,dashed,color=gray]  (0,1) -- (10,1);
\draw[very thin,dashed,color=gray]  (0,9) -- (10,9);
\fill[pattern=north east lines, pattern color=white!90!black] (1,1) rectangle (9,9);
\draw (2.9,7.1)node{$\lt[\frac1\eps\Om\rt]^1$};
\draw[color=RedOrange, pattern=north west lines, pattern color=RedOrange] (-.5,.5)--(.5,-.5)--(2,1)--(1,1)--(1,2)--cycle;
\draw[color=RedOrange, pattern=north west lines, pattern color=RedOrange] (10.5,9.5)--(9.5,10.5)--(8,9)--(9,9)--(9,8)--cycle;
\fill[pattern=north west lines, draw=Cerulean, pattern color=Cerulean] (1,1)--(2,1)--(9,8)--(9,9)--(8,9)--(1,2)--cycle;
\draw[color=Cerulean] (5.9,4.1)node{$A_\eps$};
\draw[color=RedOrange] (2,.5)node{$C_\eps$};
\draw[color=RedOrange] (9.5,8)node{$C_\eps$};
\end{tikzpicture}
\caption{\label{figReps}The sets $A_\eps$ and $C_\eps=R_\eps\bks \ov{A}_\eps$.}
\end{figure}
We decompose,
\[
\E_{1+\te}\lt(v;\frac1\eps \Om\rt) =\int_{R_\eps} f(x_1-x_2)\, dx -\int_{C_\eps} f(x_1-x_2)\, dx. 
\]
Obviously the second integral does not depend on $\eps$ and is positive, we note $c_2$ its value. For the first integral, 
we perform the change of variable $s=x_1-x_2$, $t=(1/2)(x_1+x_2)$ to obtain 
\[
\int_{R_\eps} f(x_1-x_2)\, dx =\eps^{-1} \int_{-1}^1 f(s)\, ds \, =  c_1/ \eps.
\]  
This gives $\E_{1+\te}\lt(v;\frac1\eps \Om\rt) =c_1/\eps -c_2$, which leads to
\[
\E_{\eps,1+\te}(u)=c_1-\eps c_2.
\]
Sending $\eps$ to 0, we get $\F_{1+\te}( u)=c_1\in (0,\infty)$ (recall \eqref{c1=intf}) and conclude the proof of (i)  in the case $n_1=n_2=1$ and $\Om=(0,1)^2$. We obtain counter-examples for any non empty and bounded two dimensional domain by translation and scaling of this example and in higher dimensions by extending 
the constructions trivially in the complementary directions.\medskip

(ii) Again, we only treat the two-dimensional case $n_1=n_2=1$, since higher dimensional cases may be obtained by tensorisation. 
Let $\Om=(-1,1)^2$ and set $u:=\un_\om$ with $\om:=(0,1)^2$. Obviously, $\mu=\partial_1\partial_2 u=\delta_{0}$ is a non-trivial finite Radon measure. Let us compute $\F_{2}(u)$.
First, we have
\[
|D u (x,z_1)|^{\te_1}|Du(x,z_2)|^{\te_2}=\begin{cases} 1  & \text{ if }  x_1>0,\ x_2>0,\ x_1+z_1\leq 0\text{ and }  x_2+z_2\leq 0.
 \\
 0 & \text{ in the other cases.}
\end{cases}
\]
We compute for $0<\eps<1$,
 \[
  \E_{\eps,2}(u)= \int_{\R_-\times \R_-} \dfrac{\rho_\eps(z)}{|z|^2}
  \lt[\int_0^{-z_1}\int_0^{-z_2} \, dx_1\, dx_2\rt]\, dz =\dfrac14 \int_{\R^2} \dfrac{\rho_\eps(z) |z_1|\,|z_2|}{|z|^2}\, dz= \dfrac1{4\pi}.
  \]
For the last identity, we have used the radial symmetry of  $\rho_\eps$ and $\int_{\R^2} \rho_\eps=1$. We conclude that $\F_2( u)= 1/(4\pi)\in(0,\oo)$  as required.
\end{proof}
%Notice that in this proof we did not use the fact that $\min(\te_1,\te_2)\le1$ so that it is tempting to conjecture that the optimal exponent in case (c) of~\eqref{P} is again $1+\te$ instead of $\min(\te_1,\te_2)+\te$.\medskip

In Proposition~\ref{propquant} below, we will prove that  in the case $\te\le 1$, if $\E(u)$ is finite then $\mu[u]$ is a Radon measure. We show that this is  no longer true when $\te>1$.
 \begin{proposition}[Proposition~\ref{propCNTEX:intro}]\label{propCNTEX}
{ For every $\te_1,\te_2>0$ with $\te=\te_1+\te_2>1$, there exists $u\in L^\oo(\R^2)$ compactly supported, with $\F_{1+\te}(u)<\infty$ but for which $\mu[u]=\pt_1\pt_2u$ is not a finite Radon measure.}
\end{proposition}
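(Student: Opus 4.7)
The plan is to build $u$ as an infinite superposition $u=\sum_k u_k$ of rescaled, disjointly supported ``pyramid''-type blocks. The key scaling observation is that for such a block of spatial scale $s$ and Lipschitz constant $h$, $\F_{1+\te}$ scales as $h^\te s$ while $|\mu|$ scales as $h s$; hence for $\te>1$ small-scale blocks are comparatively cheap in energy, and by packing infinitely many of them into a bounded region one can arrange finite total energy but unbounded total mass.

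As building block, take the Lipschitz pyramid
\[ P(y) := \max\bigl(0,\,\min(y_1,y_2,1-y_1,1-y_2)\bigr), \]
compactly supported in $[0,1]^2$ with $P\equiv 0$ on $\partial[0,1]^2$. The crucial feature is that $\nb P\in X_1\cup X_2$ almost everywhere (one has $\nb P=\pm e_1$ or $\pm e_2$ on the four triangles meeting at the center), so the energy of $P$ concentrates on the two diagonals of $[0,1]^2$. Adapting the explicit computation of the proof of Proposition~\ref{rem:optimal}(i) to each diagonal yields a finite positive constant $C_1$ such that $\E_{\eps,1+\te}(P)\le C_1$ uniformly in $\eps>0$, and shows $\mu[P]$ is a nonzero signed Radon measure supported on these diagonals with $|\mu[P]|(\R^2)=:C_0>0$. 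For parameters $h_k,s_k>0$ to be chosen and centers $c_k\in\R^2$, set
\[ u_k(x) := h_k s_k\,P\bigl((x-c_k)/s_k\bigr), \qquad Q_k := c_k+[0,s_k]^2; \]
each $u_k$ is $h_k$-Lipschitz, supported in $Q_k$ with $\|u_k\|_\oo=h_k s_k/2$, and the same change of variables used in Proposition~\ref{rem:optimal}(i) yields
\[ \E_{\eps,1+\te}(u_k;\Om)\le C_1\,h_k^\te\,s_k \quad \forall\,\eps>0, \qquad |\mu[u_k]|(\R^2)=C_0\,h_k\,s_k. \]

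Set $h_k:=k^{-1/4}$ and $s_k:=k^{-3/4}$. Since $\sum s_k^2=\sum k^{-3/2}<\infty$, a greedy packing provides centers $c_k$ inside a fixed bounded $\Om$ such that the $Q_k$ are pairwise disjoint with $d(Q_k,Q_j)\ge s_k+s_j$ for $k\ne j$. The sum $u:=\sum_k u_k$ is then compactly supported in $\ov\Om$, bounded by $1/2$, and globally Lipschitz of constant $\le h_1=1$ (thanks to $u_k\equiv 0$ on $\partial Q_k$). Using the separation condition and the subadditivity of $a\mapsto a^{\te_l}$ to absorb the cross contributions $k\ne j$ (where at most two blocks can simultaneously contribute to $Du(x,z_l)$) into the diagonal ones, one obtains $\E_{\eps,1+\te}(u;\Om)\le C\sum_k\E_{\eps,1+\te}(u_k;\Om)\le CC_1\sum_k h_k^\te s_k$; since $\sum_k h_k^\te s_k=\sum_k k^{-(\te+3)/4}$ converges precisely because $\te>1$, this gives $\F_{1+\te}(u)<\infty$. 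On the other hand, disjointness of supports yields $\mu[u]\restr Q_k^\circ=\mu[u_k]\restr Q_k^\circ$, so
\[ |\mu[u]|(\Om)\ge\sum_k|\mu[u_k]|(Q_k^\circ)=C_0\sum_k h_k s_k=C_0\sum_k k^{-1}=+\infty, \]
and $\mu[u]$ is not a finite Radon measure. The two technical points are (a) the uniform-in-$\eps$ bound $\E_{\eps,1+\te}(P)\lesssim 1$ for the extended pyramid, proved by isolating the two ridges and mimicking the single-diagonal roof computation of Proposition~\ref{rem:optimal}(i) on each of them; and (b) controlling the cross interactions between distinct blocks $u_k,u_j$ in $\E_{\eps,1+\te}(u)$, which is made routine by the separation $d(Q_k,Q_j)\ge s_k+s_j$ and the fact that each $u_k$ vanishes on $\partial Q_k$.
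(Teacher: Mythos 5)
Your construction is essentially the same as the paper's: rescaled, disjointly supported pyramid blocks on a greedy packing, with the scaling $\F_{1+\te}\sim h^\te s$ versus $|\mu|\sim hs$ used to make the energy summable but the mass divergent for $\te>1$. The parameter choice $h_k=k^{-1/4}$, $s_k=k^{-3/4}$ is admissible. However, the central step ``one obtains $\E_{\eps,1+\te}(u;\Om)\le C\sum_k\E_{\eps,1+\te}(u_k;\Om)$ uniformly in $\eps$, by subadditivity and the fact that at most two blocks contribute to $Du(x,z_l)$'' is a genuine gap, for two reasons.

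First, subadditivity of $a\mapsto a^{\te_l}$ holds only when $\te_l\le 1$, whereas the statement covers all $\te_1,\te_2>0$ with $\te_1+\te_2>1$ (e.g.\ $\te_1=2$). Second, and more seriously, even when $\te_1,\te_2\le 1$, subadditivity does not absorb the cross terms into the diagonal ones. Writing $|Du(x,z_1)|^{\te_1}\le|Du_{j}(x,z_1)|^{\te_1}+|Du_k(x,z_1)|^{\te_1}$ (with $x\in Q_k$, $x+z_1\in Q_j$) and similarly for $z_2$ with $x+z_2\in Q_{j'}$, the expansion produces mixed products such as $|Du_j(x,z_1)|^{\te_1}|Du_{j'}(x,z_2)|^{\te_2}$ with $k,j,j'$ all distinct, which are not dominated by any single $\E_{\eps,1+\te}(u_k)$. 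These three-block terms are exactly the ones that require a quantitative argument. The separation $d(Q_k,Q_j)\ge s_k+s_j$ does kill cross contributions involving a block of side $\ge\eps$, but blocks with $s_j\ll\eps$ can still interact with each other at scale $|z|<\eps$, and that interaction is not controlled by your inequality.

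The paper resolves this by splitting the sum at a scale-dependent index $k_\eps$ with $s_{k_\eps}\sim\eps$: the blocks with $j\le k_\eps$ are $\eps$-separated so their energies decouple exactly, while the tail $u_\eps:=\sum_{j>k_\eps}u_j$ is treated as a single Lipschitz function with constant $\max_{j>k_\eps}\|\nabla u_j\|_\oo\sim h_{k_\eps}$ and support of measure $\sim\sum_{j>k_\eps}s_j^2$, giving $\E_{\eps,1+\te}(u_\eps)\lesssim\frac{h_{k_\eps}^\te}{\eps}\sum_{j>k_\eps}s_j^2$. For your parameters this behaves like $\eps^{(\te-1)/3}\to0$, so the construction works, but the verification that $\frac{h_k^\te}{s_k}\sum_{j\ge k}s_j^2\to0$ (the analogue of the paper's remainder condition in~\eqref{condetah}) has to be done, and is missing from your write-up. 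You should replace the one-line ``absorption'' claim by this two-regime decomposition and the explicit Lipschitz-plus-support estimate of the tail.
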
 
\begin{proof}
 We will build on the example of Proposition~\ref{rem:optimal}~(i) and consider the ``hat'' function defined on $(0,1)^2$ as  $w(x_1,x_2):=\min(x_1,x_2,1-x_1,1-x_2)$ and extended by zero outside $(0,1)^2$.  Arguing as in the proof of Proposition~\ref{rem:optimal}, we have $0<\F_{1+\te}( u)<\infty$.
 For $h, \ell>0$, let $w_{h,\ell}(x):=hw(\ell^{-1}x)$ be defined on $\R^2$. We then have  
 \be\label{eq:rescale}
 |\partial_1\partial_2 w_{h,\ell}|((0,\ell)^2)=h |\partial_1\partial_2 w|((0,1)^2)=2h \quad \text{ and }\quad \F_{1+\te}( w_{h,\ell})=h^{\te} \ell^{1-\te} \F_{1+\te}( w).
 \ee
 Let $(h_k)_{k\ge 1}$, $(\ell_k)_{k\ge 1}\subset(0,\infty)$ be two decreasing sequences such that
 \be\label{condetah}
\lt(\dfrac{h_k}{\ell_k}\rt)\ \text{is decreasing,}
\quad \dfrac{h_k^\te}{\ell_k^{1+\te}}\sum_{j\geq k} \ell_j^2\,\stackrel{k\up\oo}\longto\,0,
%\quad  \ell_{k}\sum_{j= 0}^{k} \lt(\dfrac{h_j}{\ell_j}\rt)^\te\,\stackrel{k\up\oo}\longto\,0,
\quad
\sum_{k}  h_k^{\te} \ell_k^{1-\te} <\oo\quad\text{ but }\quad \sum_k h_k=\oo.
 \ee
 We also require that there exists some constant $c>0$ such that 
 \be
 \label{condetah2}
 \ell_{k+1}\ge c\,\ell_k \quad\text{for }k \ge 1,
 \ee
 and that 
 \be
 \label{condetah3}
\ell_0 \leq 1/2.
 \ee
 For instance, the sequences defined by  $h_k:=k^{-(4\te+1)/(5\te)}$ and  $\ell_k:=k^{-3/4}/2$  satisfy~\eqref{condetah},~\eqref{condetah2} and~\eqref{condetah3}.
%\footnote{notice that if $\sum_k h_k^{\te} \ell_k^{1-\te}<\infty$  but $\sum_k h_k=\oo$, Young's inequality implies that $\sum_k \ell_k=\infty$.} $\sum_k h_k=\oo$),
Next, we build a bounded sequence $(x^k)_{k\geq 0}\subset \R^2$ such that 
 \be\label{condxk}
 \inf_{j\neq k} |x^j-x^k|\ge 2\ell_k\quad\text{ for every }k\geq0.
 \ee
For this, we observe that $\sum \ell_k\geq (\ell_1/h_1) \sum h_k=\infty$. We set $k_0=1$ and we define recursively  $k_{m+1}$ for $m=0,1,2,\cdots,$ as the unique integer to be such that 
 \[
1-2\ell_{k_{m+1}} < \sum_{k=k_m}^{k_{m+1}-1} 2 \ell_k \le 1.
 \]
 Notice that by~\eqref{condetah3}, the sequence $(k_m)_{m\geq0}$ is not stationary (and thus converges towards $+\oo$).
 We then define  $L_{0}:=0$ and for $q\geq 1$, $L_q:=\sum_{m=0}^{q-1}2\ell_{k_m}$. The sequence $(L_q)_{q\geq 0}$ is increasing and bounded, indeed
 \begin{align*}
 L_\oo:=\lim_{q\up\oo} L_q&=2\ell_1 + \sum_{m\geq 1}^{\oo}2\ell_{k_m}
 \le 2\ell_1 + \sum_{m\geq 0}^{\oo}2\ell_{k_{m+1}} \lt(\sum_{k=k_m}^{k_{m+1}-1} 2 \ell_k\rt)\\
 &\leq 2\ell_1 +  4 \sum_{m\geq 0}^{\oo} \lt(\sum_{k=k_m}^{k_{m+1}-1}  \ell_k^2\rt) =2\ell_1 +  4 \sum_{k\geq 0}  \ell_k^2 \ \stackrel{\eqref{condetah}}<\ \oo.
 \end{align*}
 Now, we set for $m\geq 0$ and $k_m\le k <k_{m+1}$, 
 \[
 x^k:=\lt(L_m, \sum_{k_m\le j < k}2\ell_j\rt).
 \]
 By construction the sequence $(x^k)$ satisfies~\eqref{condxk} and $(x^k)\subset (0,L_\oo)\times (0,1)$. Eventually, we define our candidate (see Figure~\ref{figuArray}), 
 \[
  u(x):=\sum_{k\ge 1} w_{h_k,\ell_k}(x-x^k)\quad\text{for }x\in \R^2.
 \]
 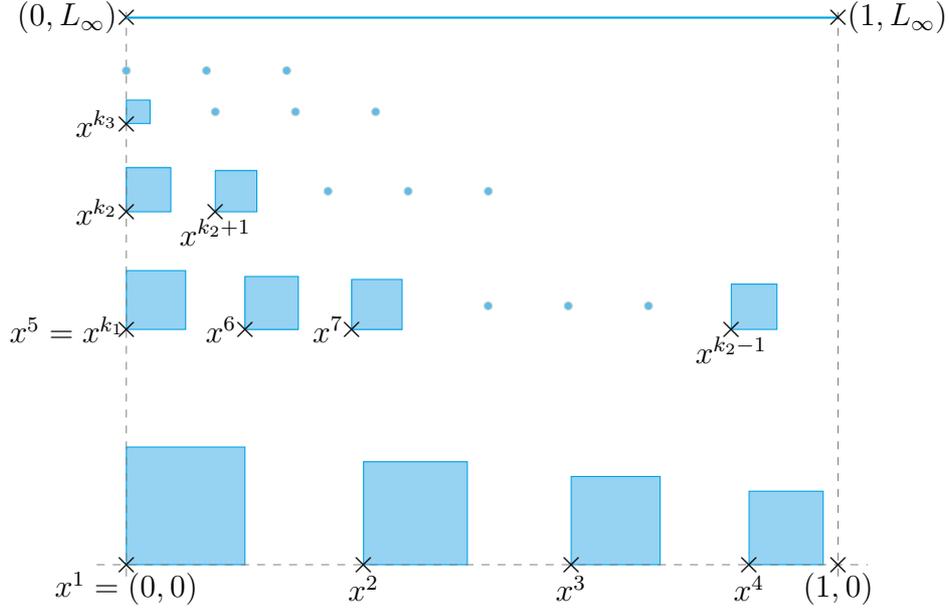
\begin{figure}[H]
\centering
\begin{tikzpicture}[scale=.78, 
decoration={
	markings, mark=between positions 0 and 1 step 30pt with 
		{
		\fill[color=white!40!Cerulean, draw=white!60!gray]  circle[radius=1.5pt];
		}
			}]	
\draw[very thin,dashed,color=gray] (-.5,0)-- (12.5,0); % horizontal dashed line
\draw (12,0)node{$\times$}; 
\draw (12,-.4)node{$(1,0)$};
\draw[very thin,dashed,color=gray] (0,-.2)-- (0,9.5);     %dashed lines
\draw[very thin,dashed,color=gray] (12,-.2)-- (12,9.5); %vertical
\draw (0,9.3)node{$\times$};
\draw (-1,9.3)node{$(0,L_\oo)$};
\draw[thick, color=Cerulean] (0,9.3)--(12,9.3);% Upper line
\draw (12,9.3)node{$\times$};
\draw (13,9.3)node{$(1,L_\oo)$};
%% First line of squares
\draw (0,0)node{$\times$};
\draw (0,-.4)node{$x^1=(0,0)$};
\draw (4,0)node{$\times$};
\draw (4,-.4)node{$x^2$};
\draw (7.5,0)node{$\times$};
\draw (7.5,-.4)node{$x^3$};
\draw (10.5,0)node{$\times$};
\draw (10.5,-.4)node{$x^4$};
\begin{pgfonlayer}{bg} 
\fill[color=white!60!Cerulean, draw=Cerulean] (0,0) rectangle (2,2);
\fill[color=white!60!Cerulean, draw=Cerulean] (4,0) rectangle (5.75,1.75);
\fill[color=white!60!Cerulean, draw=Cerulean] (7.5,0) rectangle (9,1.5);
\fill[color=white!60!Cerulean, draw=Cerulean] (10.5,0) rectangle (11.75,1.25);
\end{pgfonlayer}
%% Second line of squares 
\draw (0,4)node{$\times$};
\draw (-1,4)node{$x^5=x^{k_1}$};
\draw (2,4)node{$\times$};
\draw (1.6,4)node{$x^6$};
\draw (3.8,4)node{$\times$};
\draw (3.4,4)node{$x^7$};
\draw (10.2,4)node{$\times$};
\draw (10.2,3.65)node{$x^{k_2-1}$};
\begin{pgfonlayer}{bg} 
\fill[color=white!60!Cerulean, draw=Cerulean] (0,4) rectangle (1,5);
\fill[color=white!60!Cerulean, draw=Cerulean] (2,4) rectangle (2.9,4.9);
\fill[color=white!60!Cerulean, draw=Cerulean] (3.8,4) rectangle (4.65,4.85);
\fill[color=white!60!Cerulean, draw=Cerulean] (10.2,4) rectangle (10.97,4.77);
\end{pgfonlayer}
\path[postaction=decorate] (6.1,4.4) -- (9.1,4.4);
%% Third line of squares 
\draw (0,6)node{$\times$};
\draw (-.5,6)node{$x^{k_2}$};
\draw (1.5,6)node{$\times$};
\draw (1.5,5.65)node{$x^{k_2+1}$};
\begin{pgfonlayer}{bg} 
\fill[color=white!60!Cerulean, draw=Cerulean] (0,6) rectangle (.75,6.75);
\fill[color=white!60!Cerulean, draw=Cerulean] (1.5,6) rectangle (2.2,6.7);
\end{pgfonlayer}
\path[postaction=decorate] (3.4,6.35) -- (6.4,6.35);
%% Fourth line 
\draw (0,7.5)node{$\times$};
\draw (-.5,7.5)node{$x^{k_3}$};
\begin{pgfonlayer}{bg} 
\fill[color=white!60!Cerulean, draw=Cerulean] (0,7.5) rectangle (.4,7.9);
\end{pgfonlayer}
\path[postaction=decorate] (1.5,7.7) -- (4.5,7.7);
%% Fifth line 
\path[postaction=decorate] (0,8.4) -- (3,8.4);
\end{tikzpicture}
\caption{\label{figuArray}The (blue) closed square with bottom left corners $x^k$ and side length $\ell_k$ is the support of $ w_{h_k,\ell_k}(\cdot-x^k)$. 
}
\end{figure}
Let us show that $u$ has the desired properties. First, by construction $\supp u\subset [0,1]\times[0,L_\oo]$ so that  $u$ is compactly supported.  Next,~from the first identity of~\eqref{eq:rescale},
 \be\label{mupasbornee}
 |\mu[u]|(\Om)=|\pt_1\pt_2 u|(\Om) = 4\sum_{k\geq1} h_k\stackrel{\eqref{condetah}}=\oo.
 \ee
Let us now establish that $\F_{1+\te}( u)$ is finite. We emphasize that 
 \[
 \supp  w_{h_k,\ell_k}(\cdot - x^k)=[x_1^k,x_1^k+\ell_k]\times[x_2^k,x_2^k+\ell_k].
 \]
so that, from~\eqref{condxk}, the functions $w_{h_k,\ell_k}(\cdot,x^k)$ have disjoint supports and for $j\neq k$,
 \be\label{distsupports}
 d\lt(\supp w_{h_j,\ell_j}(\cdot - x^j),\supp w_{h_k,\ell_k}(\cdot - x^k)\rt)\ge(2-\sqrt2) \ell_k.
 \ee
By~\eqref{condetah2} for $\eps>0$ small enough, there exists an integer $k_\eps$ such that 
 \be\label{lkeps}
(2-\sqrt2)c\,\ell_{k_\eps} \le (2-\sqrt2)\ell_{k_\eps+1}<\eps\leq(2-\sqrt2)\ell_{k_\eps}.
 \ee
 Using $\supp\rho_\eps\subset B_\eps$ and~\eqref{distsupports}, we can write
  \be  \label{splitEeps}
  \E_{\eps,1+\te}(u) =\sum_{j=1}^{k_\eps} \E_{\eps,1+\te}(w_{h_j,\ell_j})+ \E_{\eps,1+\te}\lt(u_\eps\rt),
 \ee
 where we note%=\sum_{j=1}^{k_\eps} \E_{\eps,1+\te}(w_{h_j,\ell_j})+ \E_{\eps,1+\te}\lt(\sum_{j>k_\eps} w_{h_j,\ell_j}(\cdot-x^j)\rt)\\
 \[
u_\eps(x):=\sum_{j>k_\eps } w_{h_j,\ell_j}(x-x^j).
 \]
Let us first bound the remaining term $\E_{\eps,1+\te}(u_\eps)$ in~\eqref{splitEeps}. We notice that  $w_{h_j,\ell_j}(\cdot,x^j)$ is Lipschitz continuous with Lipschitz constant $\|\nb w_{h_j,\ell_j}\|_\oo=h_j/\ell_j\leq h_{k_\eps}/\ell_{k_\eps}$ for $j\geq k_\eps$. Since these functions have disjoint supports,
we conclude that their sum $u_\eps$ is Lipschitz continuous with
\[
\|\nb u_\eps\|_\oo\le h_{k_\eps}/\ell_{k_\eps}.
\]
Using $|D u_\eps(x,z_l)|\leq (h_{k_\eps}/\ell_{k_\eps}) |z|$ and $|D u_\eps(x,z_1)||D u_\eps(x,z_2)|=0$ if the three points $x$, $x+z_1$, $x+z_2$ lie in the complement of $\supp u_\eps$, we compute, 
\begin{align*}
\E_{\eps,1+\te}(u_\eps)&= \int_{B_\eps}\dfrac{\rho_\eps(z)}{|z|^{1+\te}}\int_{\R^2}|D u_\eps(x,z_1)|^{\te_1}|D u_\eps(x,z_2)|^{\te_2}\, dx\, dz \\
&\le \lt(\dfrac{h_{k_\eps}}{\ell_{k_\eps}}\rt)^\te \int_{B_\eps}\dfrac{\rho_\eps(z)}{|z|}|A_\eps(z)|\, dz,
\end{align*}
where $A_\eps(z)$ is the set of points $x\in \R^2$ such that at least one of the three points $\{x,x+z_1,x+z_2\}$ belongs to $\supp u_\eps$. We have 
\[
|A_\eps(z)|\le 3 \sum_{j\ge k_\eps} \ell_j^2\,.
\]
This leads to the estimate
\be\label{Eepsueps}
\E_{\eps,1+\te}(u_\eps)
\,\les\, \dfrac{h_{k_\eps}^\te}{\ell_{k_\eps}^\te\, \eps}  \sum_{j\ge k_\eps} \ell_j^2
\, \stackrel{\eqref{lkeps}}\les\,  \dfrac{h_{k_\eps}^\te}{\ell_{k_\eps}^{1+\te}}  \sum_{j\ge k_\eps} \ell_j^2
\, \stackrel{\eqref{condetah}}\longto \,0\quad\text{ as }\eps\dw0.
\ee
We now pass to the limit in the terms $\E_{\eps,1+\te}(w_{h_j,\ell_j})$ for $j\leq k_\eps$. We have 
\[
\E_{\eps,1+\te}(w_{h_j,\ell_j})=h_j^\te \ell_j^{1-\te} \E_{\eps/\ell_j,1+\te}(w).
\]
As in the proof of Proposition~\ref{rem:optimal} (i), we have 
\[
\E_{\eps/\ell_j,1+\te}(w)\,\up\,\F_{1+\te}( w)\quad\text{as }\eps\dw0.
\]
Summing over $1\leq j\leq k_\eps$, and sending $\eps\dw 0$, we get by monotone convergence theorem,
\[
\lim_{\eps\dw0}\sum_{j=1}^{k_\eps} \E_{\eps,1+\te}(w_{h_j,\ell_j})=\F_{1+\te}( w)\sum_{j\geq 1}h_j^\te \ell_j^{1-\te}.
\]
Combining this  together with~\eqref{splitEeps} and~\eqref{Eepsueps}, we obtain  $\F_{1+\te}(u)=\F_{1+\te}(w)\sum_{j}h_j^\te \ell_j^{1-\te}$ and by~\eqref{condetah} we conclude that $\F_{1+\te}(u)$ is finite whereas from~\eqref{mupasbornee}, $\mu[u]$ is not a finite measure.
\end{proof}

%\cite{White1999}\cite{Fleming66}\cite{Federer}\cite{Brezis2002}

%\bibliographystyle{alpha}
%\bibliography{BibStripes}
%\end{document}
%
% \section{Examples and counter-examples}
% {\red \`a faire compl\`etement}
% Dimension 1+1. $\un_E$, $min(x_1,x_2)$, optimality, Density Result in Dacorogna. Mettre le cas $u\in C^{1}$, avec $\tilde p=\te$ dans le cas $(b)$ et dire que si $u\in C^1$ tq $\partial_1 u=0$ ou $\partial_2 u=0$ en tous points alors $u=u_1$ ou $u=u_2$ donc si $u$ n est pas $1d$ et l energie est finie $p\le \tilde p$. 
%\end{document}
\section{The zero energy case}\label{S2}
In order to present the main ideas of the proof of Theorem~\ref{theointro:zero}, we start by considering the simplest possible setting. 
We restrict ourselves to $n_1=n_2=1$, $\te_1+\te_2=1$ and  work on the torus $\Om=\T=(\R/\Z)^2$ to avoid boundary effects (in particular, $\T^\eps=\T$). In this periodic setting, we need to distinguish the ambient manifold $\T$ from the space of tangent vectors $X=\R^2$, we define:
\[
\Om_1=\R/\Z\times\{0\},\quad \Om_2=\{0\}\times\R/\Z,\quad  X_1=\R\times\{0\},\quad X_2=\{0\}\times\R.
\]
With this notation, the definitions of $\U(\T)$ and of the energy are unchanged. 
\begin{proposition}\label{prop:main2d}
 Let $\te_1$, $\te_2>0$ be such that $\te_1+\te_2=1$ and let $u\in L(\T)$ be such that 
 \be\label{hypgoodE}
 \liminf_{\eps\dw 0}\int_{\R^2} \rho_\eps(z)\int_{\T} \frac{|Du(x,z_1)|^{\te_1} |Du(x,z_2)|^{\te_2}}{|z|^2} \,dx \,dz=0,
 \ee
then $u\in \U(\T)$.

\end{proposition}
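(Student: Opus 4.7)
My plan has three steps: (1) convert the smallness of $\E_{\eps,2}(u)$ into an $L^1$ control on the second-order difference $D^2u(x,z_1,z_2):=u(x+z_1+z_2)-u(x+z_1)-u(x+z_2)+u(x)$; (2) deduce from this control that the distributional mixed derivative $\mu[u]=\pt_1\pt_2u$ vanishes, so that $u(x)=u_1(x_1)+u_2(x_2)$ a.e.\ on $\T$; (3) invoke the Bourgain--Brezis--Mironescu characterization of $W^{1,\te_l}$ in one direction to conclude that one of $u_1,u_2$ is necessarily constant.

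\textbf{Steps 1 and 2.} For Step~1 I exploit $\te_1+\te_2=1$ to factor $|D^2u|=|D^2u|^{\te_1}|D^2u|^{\te_2}$, then combine the two complementary triangle inequalities
\[
|D^2u|\le|Du(x,z_1)|+|Du(x+z_2,z_1)|,\qquad |D^2u|\le|Du(x,z_2)|+|Du(x+z_1,z_2)|,
\]
with the subadditivity $(a+b)^{\te_l}\le a^{\te_l}+b^{\te_l}$ (valid since $0<\te_l\le 1$) to get a pointwise bound by a sum of four cross terms. After integrating in $x\in\T$, torus translation invariance ($x\mapsto x+z_l$) and radiality of $\rho$ ($z_l\mapsto -z_l$) bring every term back to the integrand defining $\E_{\eps,2}$, yielding
\[
\int_{\R^2}\frac{\rho_\eps(z)}{|z|^2}\int_\T|D^2u(x,z_1,z_2)|\,dx\,dz\;\les\;\E_{\eps,2}(u).
\]
For Step~2, Taylor expansion at third order gives $D^2\vhi(x,z_1,z_2)=z_1z_2\,\pt_1\pt_2\vhi(x)+O(|z|^3\|\vhi\|_{C^3})$. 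Averaging this identity in $z$ against the bounded weight $\rho_\eps(z)\,z_1z_2/|z|^4$, moving $D^2$ from $\vhi$ onto $u$ by torus shift invariance, and applying the substitution $z\mapsto -z$, I get
\[
c_\rho\int_\T u\,\pt_1\pt_2\vhi\,dx=\int_{\R^2}\frac{\rho_\eps(z)\,z_1z_2}{|z|^4}\int_\T D^2u(x,z)\,\vhi(x)\,dx\,dz+O(\eps\|\vhi\|_{C^3}),
\]
where $c_\rho:=\int\rho(y)(y_1y_2)^2/|y|^4\,dy>0$ (in polar coordinates the angular integrand is a positive multiple of $\sin^2(2\psi)$). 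Using $|z_1z_2|\le|z|^2/2$ and Step~1, the right-hand side is bounded by $\|\vhi\|_\oo\,\E_{\eps,2}(u)+O(\eps\|\vhi\|_{C^3})$, which tends to zero along the subsequence supplied by hypothesis~\eqref{hypgoodE}. Hence $\langle\mu[u],\vhi\rangle=0$ for every $\vhi\in C^\oo(\T)$, so $\pt_1\pt_2u=0$ in $\mathcal{D}'(\T)$ and $u(x)=u_1(x_1)+u_2(x_2)$ almost everywhere on $\T$.

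\textbf{Step 3 and main obstacle.} Plugging the splitting back into $\E_{\eps,2}$ decouples the $x$-integration:
\[
\E_{\eps,2}(u)=\int_{\R^2}\frac{\rho_\eps(z)}{|z|^2}\,a(z_1)\,b(z_2)\,dz,\quad a(h):=\int_{\T^1}|u_1(\cdot+h)-u_1|^{\te_1},\quad b(h):=\int_{\T^1}|u_2(\cdot+h)-u_2|^{\te_2}.
\]
I would then split the $z$-integral into the sectors $\{|z_1|\le|z_2|\}$ and $\{|z_2|\le|z_1|\}$, bound $|z|^2\le 2z_m^2$ on each, and use Fubini on the dominant variable to extract lower bounds by products of one-dimensional BBM-type quantities of the form $\int\rho_\eps^{(l)}(h)\,a(h)/|h|^{\te_l}\,dh$ and $\int\rho_\eps^{(m)}(h)\,b(h)/|h|^{\te_m}\,dh$. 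By the Bourgain--Brezis--Mironescu theorem these 1D integrals either converge to a positive multiple of $\|\pt u_l\|_{\te_l}^{\te_l}$ or diverge; in either case, if both $u_1$ and $u_2$ were non-constant the product stays bounded below by a positive constant, contradicting $\E_{\eps_k,2}(u)\to 0$. Therefore $u_1$ or $u_2$ is constant and $u\in\U(\T)$. The main obstacle is precisely this sectoral/Fubini reduction: the kernel $|z|^{-2}$ couples $z_1$ and $z_2$ and must be disentangled, using the homogeneity provided by $\te_1+\te_2=1$, into a product of one-dimensional kernels compatible with the BBM framework.
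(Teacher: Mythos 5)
Your Steps 1 and 2 are correct and give a genuinely different path to $\mu[u]=0$ than the paper's. The paper first reduces, via Fatou and Markov's inequality applied to the $z$-integral in polar coordinates, to a \emph{single} good direction $\sigma=\sigma_1+\sigma_2$ and a radial scale $\eps_k\dw 0$, and then proves $\pt_1\pt_2 u=0$ by a discrete integration by parts along that one direction. You instead keep the full $z$-average: after controlling $\int\rho_\eps(z)|z|^{-2}\int_\T|D^2u|\,dx\,dz\les\E_{\eps,2}(u)$ (a correct use of the four cross-term bound, subadditivity of $t\mapsto t^{\te_l}$, and the torus/radial symmetries), you test against the carefully chosen weight $\rho_\eps(z)z_1z_2/|z|^4$, whose angular average is the positive constant $c_\rho$. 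This is a clean and elegant way to produce $\langle\mu[u],\vhi\rangle=0$ without extracting a direction, trading a Fatou/Markov selection for an explicit choice of test kernel. Both routes are valid.

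However, Step 3 has a genuine gap that you yourself acknowledge. Once $u(x)=u_1(x_1)+u_2(x_2)$, you attempt to show that the product kernel $\rho_\eps(z)/|z|^2$ splits, via a sectoral decomposition, into two one-dimensional BBM-type quantities, and then invoke the Bourgain--Brezis--Mironescu characterization of $W^{1,\te_l}$. This has two problems. First, the two $z$-variables remain coupled inside $\rho_\eps(z)$ even after you pass to a sector like $\{|z_1|\le|z_2|\}$: the inner integral $\int_{|z_1|\le|z_2|}\rho_\eps(z)\,a(z_1)\,dz_1$ still depends on $z_2$, so you do not obtain a product of two decoupled one-dimensional integrals without further argument. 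Second, the BBM theorem characterizes $W^{1,p}$ for $p\ge 1$, while here $\te_l\in(0,1)$; there is no $W^{1,\te_l}$ characterization to invoke, and the dichotomy "converges to a positive multiple of $\|\pt u_l\|_{\te_l}^{\te_l}$ or diverges" is not a correct statement of BBM. The paper avoids both issues precisely because its Step 1 has already fixed one direction $\sigma$: the $x$-integral of $|Du_1(x_1,\eps_k\sigma_1)|^{\te_1}|Du_2(x_2,\eps_k\sigma_2)|^{\te_2}/\eps_k^2$ factors \emph{exactly} into a product of two one-dimensional quantities, one of which must vanish along a subsequence. Then, rather than BBM, the paper uses the elementary upgrade
\[
\int_0^1\frac{|Du_l(x_l,\eps_k\sigma_l)|}{\eps_k}\,dx_l\ \le\ (2\|u_l\|_\oo)^{1-\te_l}\int_0^1\frac{|Du_l(x_l,\eps_k\sigma_l)|^{\te_l}}{\eps_k}\,dx_l\ \to\ 0,
\]
which is valid because one may assume $u\in L^\oo$ by the arctan truncation (Remark~\ref{remarkLinfty}), and a discrete integration by parts then yields $\pt_l u_l=0$. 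If you want to salvage your approach of keeping the full $z$-average, the most direct fix is to perform, at the start of Step 3, the same Fatou/Markov reduction on $\int\rho_\eps(z)|z|^{-2}a(z_1)b(z_2)\,dz$ to isolate a single angular direction; but that collapses your Step 3 onto the paper's Step 1, so the genuine novelty you gain over the paper is confined to Steps 1 and 2.
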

\begin{proof}
As noticed in Remark~\ref{remarkLinfty}, we may assume without loss of generality that  $u\in L^\oo(\Om)$.\\
 {\it Step 1.} In this first step we prove that~\eqref{hypgoodE} allows us to find  a sequence $z^k=\eps_k (\sigma_1+\sigma_2)$ with $\sigma_1\in X_1\sm\{0\}$, $\sigma_2\in X_2\sm\{0\}$ and $\eps_k\dw 0$ such that 
 \be\label{goodzq}
  \lim_{k\up \oo} \int_{\T} \frac{q(x,z^k)}{\eps_k^2} \,dx=0, 
 \ee
 where we have set
 \be\label{def:q}
  q(x,z):=\lt(|Du(x+ z_2,z_1)|^{\te_1} +|Du(x,z_1)|^{\te_1}\rt)\lt(|Du(x+z_1,z_2)|^{\te_2}+|Du(x,z_2)|^{\te_2}\rt).
 \ee
Let us { digress slightly and} first  derive a consequence of~\eqref{goodzq}. For this, first notice that 
\begin{eqnarray*}
D u(x+z_1,z_2)-D u(x,z_2)&=&u(x+z_1+z_2)- u(x+z_2)-u(x+z_1)+u(x)\\
&=&D u(x+z_2,z_1)-D u(x,z_1),
\end{eqnarray*}
 so that by the triangle inequality we have
\begin{eqnarray*}
|D u(x+z_1,z_2)-D u(x,z_2)| &\leq & |Du(x,z_1)|+|D u(x+z_2,z_1)|\\
|D u(x+z_1,z_2)-D u(x,z_2)| &\leq & |Du(x,z_2)|+ |Du(x+z_1,z_2)|.
\end{eqnarray*}
Hence,
\be\label{triangle}
  |D u(x+z_1,z_2)-D u(x,z_2)|^{\te_1}\, \les |Du(x,z_1)|^{\te_1}+|D u(x+z_2,z_1)|^{\te_1},
 \ee
 and similarly for $\te_2$ so that  since $\te_1+\te_2=1$,
\begin{multline}\label{D1D2te=1}
 |D[Du(\cdot,z_2)(x,z_1)|= |Du(x+z_1,z_2)-Du(x,z_2)|  \\
 \les\lt(|Du(x+ z_2,z_1)|^{\te_1} +|Du(x,z_1)|^{\te_1}\rt)\lt(|Du(x+z_1,z_2)|^{\te_2}+|Du(x,z_2)|^{\te_2}\rt).
\end{multline}
Therefore, a consequence of ~\eqref{goodzq} would be the maybe more suggestive
\be\label{goodz}
  \lim_{k\up \oo} \int_{\T} \frac{|D[Du(\cdot,z_2^k)](x,z_1^k)|}{\eps_k^2} \,dx=0. 
 \ee
Observe that if $u$ were smooth{ , the integrand would converge to $|\sigma_1|\,|\sigma_2|\,|\pt_1\pt_2 u(x)|$ and~\eqref{goodz} } would directly imply $\pt_1 \pt_2 u=0$.\medskip

We now prove~\eqref{goodzq}. Making the change of variable $\tilde x=x+z_1$, $\tilde z=-z_1+z_2$ in~\eqref{hypgoodE}, we have
\[
 \liminf_{\eps\dw 0} \int_{\R^2} \rho_\eps(z)\int_{\T} \frac{|Du(\tilde x,\tilde z_1)|^{\te_1} |Du(\tilde x+\tilde z_1,\tilde z_2)|^{\te_2}}{|\tilde z|^2} d\tilde x d \tilde z=0. 
\]
Similarly we get 
\[
 \liminf_{\eps\dw 0}\int_{\R^2} \rho_\eps(z)\int_{\T} \frac{|Du(x+ z_2,z_1)|^{\te_1} |Du(x,z_2)|^{\te_2}}{|z|^2} \,dx \,dz=0
\]
and 
\[
 \liminf_{\eps\dw 0}\int_{\R^2} \rho_\eps(z)\int_{\T} \frac{|Du(x+ z_2,z_1)|^{\te_1} |Du(x+z_1,z_2)|^{\te_2}}{|z|^2} \,dx \,dz=0.
\]
Summing these { limits and~\eqref{hypgoodE}} we obtain,
\be\label{q2d}
 \liminf_{\eps\dw 0}\int_{\R^2} \rho_\eps(z)\int_{\T}\frac{q(x,z)}{|z|^2} \,dx \,dz\stackrel{\eqref{def:q}}{=} 0,
\ee
with $q$ given by~\eqref{def:q}. 
%\begin{multline*}\label{q2d}
% \liminf_{\eps\dw 0}\int_{\R^2} \rho_\eps(z)\int_{\T}\frac{q(x,z)}{|z|^2} \,dx \,dz\stackrel{\eqref{def:q}}{=}
% \\\liminf_{\eps\dw 0}\int_{\R^2} \rho_\eps(z)\int_{\T}\frac{\lt(|Du(x+ z_2,z_1)|^{\te_1} +|Du(x,z_1)|^{\te_1}\rt)\lt(|Du(x+z_1,z_2)|^{\te_2}+|Du(x,z_2)|^{\te_2}\rt)}{|z|^2} \,dx \,dz\\
%  =0.
%\end{multline*}
Using polar coordinates and the change of variables $r=\eps s$, we obtain 
\[
 \liminf_{\eps\dw 0} \int_{\partial B_1} \int_0^{\infty} \lt[\int_{\T} \frac{q(x,s\eps \sigma)}{s^2 \eps^2} \,dx\rt] s \rho(s) \,ds d\h^1(\sigma)=0.
\]
Applying Fatou Lemma and then Markov inequality, we may find for $l=1,2$, $\hat{\sigma}_l\in X_l\backslash\{0\}$ with $\frac{1}{4}\le |\hat{\sigma}_l|^2\le \frac{3}{4}$ and $s\in (0,1)$ such that setting $\sigma:=s(\hat{\sigma}_1+\hat{\sigma}_2)$, 
\[
 \liminf_{\eps\dw 0}\int_{\T} \frac{q(x,\eps \sigma)}{\eps^2} \,dx=0.
\]
Passing to a subsequence and noting $z^k:= \eps_k \sigma$, we get \eqref{goodzq}.\\

\medskip
{\it Step 2.} Let us show that~\eqref{goodzq} implies  $\partial_1 \partial_2 u =0$ in $\mathcal{D}'(\T)$. Let $\vhi\in C^\oo(\T)$. For every $x\in\T$, 
\begin{align*}
\pt_1\pt_2\vhi(x)&=\pm \dfrac1{|\sigma_1|\,|\sigma_2|} \pt_{\sigma_1}\pt_{\sigma_2}\vhi(x)=
\pm\dfrac1{|\sigma_1|\,|\sigma_2|}\lim_{k\up\oo } \dfrac{D[D \vhi(\cdot,-z_1^k)](x,-z_2^k)}{\eps_k^2}\,,
\end{align*}
with uniform convergence in $\T$. Multiplying by $u$, using the dominated convergence theorem and two discrete integration by parts, 
we compute
\begin{align}\nonumber
 \lt|\int_{\T} \partial_1\partial_2 \vhi u\, dx\rt|&=\dfrac1{|\sigma_1|\,|\sigma_2|}\, \lim_{k\up \oo} \,\dfrac1{\eps_k^2} \,\lt|\int_{\T} D[D \vhi(\cdot,-z_1^k)](x,-z_2^k) u(x) \,dx\rt|\\
 &=\dfrac1{|\sigma_1|\,|\sigma_2|}\,\lim_{k\up \oo} \, \dfrac1{\eps_k^2} \,  \lt|\int_{\T} \vhi D[D u(\cdot,z_1^k)](x,z_2^k) \,dx\rt|
 \label{thiscomputation}
 \\
 &\le\, \dfrac{\|\vhi\|_\oo}{|\sigma_1|\,|\sigma_2|}\, \lim_{k\up \oo}\int_{\T} \, \dfrac{|D[Du(\cdot,z_1^k)](x,z_2^k)|}{\eps_k^2} \,dx\stackrel{\eqref{goodz}}{=}0.
 \nonumber
\end{align}
Therefore, $\pt_1\pt_2u=0$ in the sense of distributions in $\T$.
%For this we introduce for $z\neq 0$ the discrete derivative
%\be\label{def:delta}
% \Delta u(x,z):= \frac{Du(x,z)}{|z|}
%\ee
%and notice that for smooth $u$ and $i= 1, 2$,  $\Delta u(x,z_i)$ is an approximation of $\partial_{i} u$ so that $\Delta[\Delta u(\cdot,z_2)](x,z_1)$ is an approximation of $\partial_1\partial_2 u$. For $\vhi\in C^\oo(\T)$, since~\eqref{goodz} can be rephrased as
%\be\label{goodzdelta}
% \lim_{k\up \oo} \int_{\T} |\Delta[\Delta u(\cdot,z_2^k)](x,z_1^k)|=0,
%\ee
%we have by Dominated Convergence Theorem
%\begin{align*}
% \lt|\int_{\T} \partial_1\partial_2 \vhi u\rt|&=\lim_{k\up \oo} \lt|\int_{\T} \Delta[\Delta \vhi(\cdot,-z_1^k)](x,-z_2^k) u(x) \,dx\rt|\\
% &=\lim_{k\up \oo}\lt|\int_{\T} \vhi \Delta[\Delta u(\cdot,z_1^k)](x,z_2^k) \,dx\rt|\\
% &\le \|\vhi\|_\oo \lim_{k\up \oo}\int_{\T}  |\Delta[\Delta u(\cdot,z_1^k)](x,z_2^k) \,dx|\stackrel{\eqref{goodzdelta}}{=}0,
%\end{align*}
%so that $\partial_1\partial_2 u=0$.\\

\medskip
{\it Step 3.} Integrating the relation $\partial_1\partial_2 u=0$ we find that $u(x)=u_1(x_1)+u_2(x_2)$ with $u_1$ and $u_2$ periodic functions on $\R$. Moreover, since $u$ is bounded, so are $u_1$ and $u_2$. Let us finally prove that $\pt_1 u_1\equiv 0$ or $\pt_2 u_2\equiv 0$.
From~\eqref{goodzq}, we have for some $\sigma_1\in X_1\sm \{0\}$, $\sigma_2\in X_2\sm\{0\}$ and a sequence $\eps_k\dw 0$, 
\begin{multline*}
 0=\lim_{k\up \oo} \int_{\T}\frac{|Du_1(x, \eps_k \sigma_1)|^{\te_1} |Du_2(x,\eps_k \sigma_2)|^{\te_2}}{\eps_k^2} \,dx\\
 =\lim_{k\up \oo} \lt(\int_0^1 \frac{|Du_1(x_1,\eps_k \sigma_1)|^{\te_1}}{\eps_k} \,dx_1\rt)\lt(\int_0^1 \frac{|Du_1(x_2,\eps_k \sigma_2)|^{\te_1}}{\eps_k} \,dx_2\rt),  
\end{multline*}
so that up to extraction  for $l=1$ or $l=2$, there holds
\[
 \lim_{k\up \oo} \int_0^1 \frac{|Du_l(x_l,\eps_k \sigma_l)|^{\te_l}}{\eps_k} \,dx_l=0.
\]
In particular, since $u_l$ is bounded and $0<\te_l<1$, 
\[
 \lim_{k\up \oo} \int_0^1 \frac{|Du_l(x_l,\eps_k \sigma_l)|}{\eps_k} \,dx_l \le \|u_l\|_\oo^{1-\te_l} \lim_{k\up \oo} \int_0^1 \frac{|Du_l(x_l,\eps_k \sigma_l)|^{\te_l}}{\eps_k} \,dx_l=0.
\]
Arguing as in~\eqref{thiscomputation}, we obtain that  $\pt_l u_l\equiv 0$ in the sense of distributions in $\R/\Z$. We conclude that $u\in \U(\T)$ which ends the proof of the proposition.%from which we may apply \cite[Th. 3.1]{MMS} as in the proof of \cite[Th. 4.1]{MMS} to obtain that $u_1$ is constant.
\end{proof}

We turn to the proof of Theorem~\ref{theointro:zero}, which extends Proposition~\ref{prop:main2d} in several directions by  considering general space dimensions and   powers $\te_1,$ $\te_2$.%(recall~\eqref{def:E0}).\\

Let us recall some notation. For $\Omega=\Omega_1+\Omega_2$ with $\Omega_1\subset X_1$ and $\Omega_2\subset X_2$ and a function $u\in L^1_{loc}(\Om)$, the matrix valued distribution $\mu[u]$ is defined as
\[
\mu=\mu[u]:=\nb_{1}\nb_{2} u= (\pt_{x_1^i}   \pt_{x_2^j}  u )_{\substack 1\leq i\leq n_1,\\1\leq j\leq n_2},
\]
where  $\dim X_1=n_1$ and $\dim X_2=n_2$. For $\te_1$, $\te_2>0$, we also recall  that the critical exponent $P(\te_1,\te_2)$ has been introduced in Definition~\ref{def:p}
%\be\label{P}
%p(\te_1,\te_2)\, :=\, 
%\left\{ \begin{array}{clr}
%2& \text{ if $\te\leq1$,}&(a)\smallskip\\ 
%1+\te& \text{ if $\te\ge 1$ and $\min(\te_1,\te_2)\leq 1$,}&(b)\smallskip\\
%\min(\te_1,\te_2)+\te& \text{ if  $\te_1,\te_2>1$.}&(c)
%\end{array}\right.
%\ee
and that when $p=P(\te_1,\te_2)$, we  simply write $\F( u)$ for $\F_{p}(u)$. 
\begin{remark}\label{rem:te=1}
We will  use the following inequality to reduce the case $\te<1$ to the case $\te=1$. Assume that $\te<1$ and let us note  $\te'_l:=\te_l+(1-\te)/2$ for $l\in\{1,2\}$. We have $\te':=\te'_1+\te'_2=1$ and for $u\in L^\oo(\Om)$ and  $x,z$ such that $x,x+z\in \Om$, we have 
\[
 |D u(x,z_1)|^{\te_1' } |Du(x,z_2)|^{\te_2'}
 \le 2^{1-\te} \|u\|_\oo^{1-\te}  |D u(x,z_1)|^{\te_1} |Du(x,z_2)|^{\te_2},
\]
so that 
\[
\F_{2}^{\te_1',\te_2'}(u)\le 2^{1-\te} \|u\|_\oo^{1-\te} \F_{2}^{\te_1,\te_2}(u).
\]
\end{remark}
We now prove that the energy $\F( u)$ controls the cross derivatives $\mu[u]=\nabla_1\nabla_2u$.
\begin{proposition}[Proposition~\ref{propquant:intro}]\label{propquant}
Let $u\in L^1_{loc}(\Om)$. We have the following estimates, for every $\vhi\in C^{\oo}_c(\Om,\R^{n_1\times n_2})$,
 \be
 \label{quanta}
  \langle\mu[u], \vhi\rangle\les 
  \begin{cases}
 \quad \qquad% \F( u) \|u\|_{\oo}^{1-\te} \|\vhi\|_{\oo} \qquad &\text{ if $\te=1$ or $\te<1$ and  $u\in L^\oo(\Om)$}, \smallskip\\
  \F( u)\|\vhi\|_{\oo} \qquad &\text{ if }\te=1\text{ or } [\te<1\text{ and  }\|u\|_\oo\leq 1], \smallskip\\
 \qquad \F( u)^{1/\te}\|\nb_1 \vhi\|_1^{1-1/\te}\|\vhi\|_{\oo}^{1/\te} \qquad &\text{ if }\theta\geq1 \text{ with } \te_1\le 1, \smallskip\\
\qquad  \F( u)^{1/\te}\|\nb_1 \vhi\|_1^{\te_2/\te}\|\vhi\|_{\oo}^{\te_1/\te} \qquad& \text{ if } 1\leq \te_1\leq  \te_2.
 \end{cases}
\ee
\end{proposition}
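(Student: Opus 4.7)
The plan is to reduce everything to a \textit{second-order} energy bound
$$\int_{\R^n}\rho_\eps(z)\int_{\Om^\eps}\frac{|D[Du(\cdot,z_2)](x,z_1)|^\te}{|z|^P}\,dx\,dz\ \les\ \F(u),$$
where $D[Du(\cdot,z_2)](x,z_1):=u(x+z_1+z_2)-u(x+z_1)-u(x+z_2)+u(x)$ is the mixed discrete second difference. To prove this estimate (in all three cases), I would combine the identities
$D[Du(\cdot,z_2)](x,z_1)=Du(x+z_1,z_2)-Du(x,z_2)=Du(x+z_2,z_1)-Du(x,z_1)$
with the triangle inequality and the subadditivity (for $\te_l\le 1$) or convexity (for $\te_l\ge 1$) of $t\mapsto t^{\te_l}$ to get
$|D[Du(\cdot,z_2)](x,z_1)|^\te\les\bigl(|Du(x+z_2,z_1)|^{\te_1}+|Du(x,z_1)|^{\te_1}\bigr)\bigl(|Du(x+z_1,z_2)|^{\te_2}+|Du(x,z_2)|^{\te_2}\bigr).$
Expanding and absorbing the four shifts through the changes of variables $x\mapsto x\pm z_1,\,x\pm z_2$ matches each resulting summand to $\E_{\eps,P}(u)$.

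To turn this bound into information on $\mu[u]$, for each pair $(i,j)$ I would work on the positive subset $A_{ij}:=\{\sigma\in B_1:\sigma_1\cdot e_i>0,\ \sigma_2\cdot f_j>0\}$ and the constant $K_{ij}:=\int_{A_{ij}}\rho(\sigma)(\sigma_1\cdot e_i)(\sigma_2\cdot f_j)\,d\sigma>0$. Reflection symmetries (e.g.\ $\sigma_1\cdot e_k\mapsto-\sigma_1\cdot e_k$ for $k\neq i$) preserve both $A_{ij}$ and the radial kernel $\rho$ while flipping the sign of $(\sigma_1\cdot e_k)(\sigma_2\cdot f_l)$ when $(k,l)\neq(i,j)$, so all cross-averages vanish except at $(i,j)$. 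A Taylor expansion on smooth $u$, plus an approximation argument for general $u\in L^1_{loc}$, then gives
$K_{ij}\,\langle\pt_{x_1^i}\pt_{x_2^j}u,\vhi_{ij}\rangle=\lim_{\eps\dw 0}\frac{1}{\eps^2}\int_{\eps A_{ij}}\rho_\eps(z)\int\vhi_{ij}(x)\,D[Du(\cdot,z_2)](x,z_1)\,dx\,dz,$
so the proposition reduces to bounding this RHS in each case.

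In case (a) ($P=2$, $\te\le 1$), the hypothesis gives $\|u\|_\oo\les 1$, hence $|D[Du]|\le 4\|u\|_\oo\les|D[Du]|^\te$; combined with $1/\eps^2\le 1/|z|^2$ on $B_\eps$ and the second-order estimate above,
$\frac{1}{\eps^2}\int\rho_\eps(z)\int|\vhi_{ij}||D[Du]|\,dx\,dz\les\|\vhi_{ij}\|_\oo\int\rho_\eps\int\frac{|D[Du]|^\te}{|z|^2}\,dxdz\les\|\vhi\|_\oo\,\F(u).$
In cases (b) and (c), $P>2$, this direct approach fails, and one has to integrate by parts:
$\int\vhi\,D[Du(\cdot,z_2)](x,z_1)\,dx=\int Du(x,z_2)\,D\vhi(x,-z_1)\,dx.$
I would then interpolate $D\vhi(x,-z_1)$ between its $L^\oo$-bound $2\|\vhi\|_\oo$ and its Lipschitz bound $|z_1|\int_0^1|\nb_1\vhi(x-tz_1)|\,dt$ via
$|D\vhi(x,-z_1)|\les\|\vhi\|_\oo^{\lambda}\Bigl(|z_1|\int_0^1|\nb_1\vhi(x-tz_1)|\,dt\Bigr)^{1-\lambda},$
choosing $\lambda=1/\te$ in case (b) and $\lambda=\te_1/\te$ in case (c). A H\"older inequality in $x$ with conjugate exponents $(\te,\te/(\te-1))$ in case (b), or $(\te/\te_1,\te/\te_2)$ in case (c), then reduces matters to bounding a weighted $L^q$-norm of the first difference $Du(\cdot,z_2)$.

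The main obstacle will be to show that, after averaging in $z$ against $\rho_\eps/\eps^2$, this weighted $L^q$-norm of $Du(\cdot,z_2)$ is controlled by $\F(u)^{1/\te}$ with the correct $\eps$-scaling. I expect this step to invoke once more the identity $D[Du(\cdot,z_2)](x,z_1)=Du(x+z_2,z_1)-Du(x,z_1)$, trading powers of $|Du(\cdot,z_2)|$ against shifted copies of $|Du(\cdot,z_1)|$ plus $|D[Du]|$ by the triangle inequality, feeding the resulting $|D[Du]|^\te$ into the first-step bound, and then performing one final H\"older inequality in $z$. The delicate matching of exponents in this last step is precisely what pins down the value of $P$: in particular it only yields $P=\te_1+\te$ in case (c), which is most likely the origin of the suboptimality noted in Remark~\ref{remark2}.
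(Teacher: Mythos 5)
Your reduction to the second--order quantity and your reflection argument to isolate the $(i,j)$ component of $\mu$ are both sound, and they give a correct and somewhat more direct proof of the first line of~\eqref{quanta} (the paper reaches the same point via a Haar--measure + Fatou + Markov selection of a good direction and sequence in Lemma~\ref{lem:goodzk}, combined with Lemma~\ref{lem:measure}~(i) and case~(a) of Lemma~\ref{lem:measure}~(ii), but the mechanism is the same: triangle inequality, change of variables $x\mapsto x\pm z_1,\,x\pm z_2$, one discrete integration by parts, and the pointwise bound $|z|^{-2}\rho_\eps(z)\ge c\,\eps^{-2}\rho_\eps(z)$).

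For cases (b) and (c), however, there is a genuine gap, and you half--acknowledge it yourself.  Your plan  --- interpolate $D\vhi(\cdot,-z_1)$ between its $L^\oo$ and Lipschitz bounds, then H\"older in $x$ with exponents $(\te,\te/(\te-1))$ --- would leave you needing to show (after the change of variables $z=\eps\sigma$) that $\|Du(\cdot,z_2)\|_{L^\te(\Om^\eps)}^\te\les \eps^{1+\te}\,\F(u)$ for the relevant $|z_2|\sim\eps$.  But $\F(u)$ only controls the \emph{product} $|Du(\cdot,z_1)|^{\te_1}|Du(\cdot,z_2)|^{\te_2}$ and carries no information whatsoever on the single factor $|Du(\cdot,z_2)|$: any function of $x_2$ alone has $\F(u)=0$ while $\|Du(\cdot,z_2)\|_{L^\te}$ is generically of order $1$.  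Your closing suggestion (trade $|Du(\cdot,z_2)|$ for $|Du(\cdot,z_1)|+|D[Du]|$ via the identity $D[Du(\cdot,z_2)](x,z_1)=Du(x+z_1,z_2)-Du(x,z_2)$, feed $|D[Du]|^\te$ into the second--order bound, and H\"older in $z$) does not resolve this, since the shifted term $|Du(x+z_1,z_2)|^\te$ is of exactly the same nature as what it replaces; there is no gain.

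What the paper actually does in case (b) is qualitatively different.  For $\te_1\le 1\le\te$, it first establishes the \emph{nonlinear} second--order bound
\[
\big(|Du(x+z_1,z_2)|^{\te-1}+|Du(x,z_2)|^{\te-1}\big)\,\big|D[Du(\cdot,z_2)](x,z_1)\big|\ \les\ q(x,z),
\]
which, via the elementary inequality $\big||s|^{\te-1}s-|t|^{\te-1}t\big|\les|s-t|\,(|s|^{\te-1}+|t|^{\te-1})$, converts into a discrete ``chain rule''
\[
\big|D\big[\phi(Du(\cdot,z_2))\big](x,z_1)\big|\ \les\ q(x,z),\qquad\text{where }\phi(s):=|s|^{\te-1}s .
\]
Because $\phi$ is not Lipschitz near $0$, one replaces it by the truncation $\phi_\eta$ (equal to $\eta^{1-\te}|s|^{\te-1}s$ for $|s|\le\eta$, to $s$ for $|s|\ge\eta$), which satisfies both $0\le\phi_\eta'\le\eta^{1-\te}\phi'$ and $\|\phi_\eta-\mathrm{Id}\|_\oo\le\eta$.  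Splitting $D_{r_k}u(\cdot,f_j)=\phi_\eta(D_{r_k}u)+\big(\mathrm{Id}-\phi_\eta\big)(D_{r_k}u)$ and integrating by parts only the first piece then yields
\[
\langle\mu_{i,j},\vhi\rangle\ \les\ \|\vhi\|_\oo\,\eta^{1-\te}\,\mathcal F_{i,j}(u)\ +\ \eta\,\|\nb_1\vhi\|_1,
\]
and optimizing over $\eta$ gives exactly the second line of~\eqref{quanta}.  Case (c) is then a one--line H\"older reduction of $(\te_1,\te_2)$ to $(1,\te_2/\te_1)$.  This nonlinear truncation step is the missing idea in your proposal: without it, none of the H\"older manipulations you are contemplating can produce the correct $\F(u)^{1/\te}$, because they all eventually ask $\F(u)$ to control a first--order quantity of $u$ in only one of the two directions, which it structurally cannot do.
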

Because of the applications we have in mind in the last part of the paper (and in \cite{GM2}), it will actually be more convenient to derive Proposition~\ref{propquant} as a consequence of Lemma~\ref{lem:goodzk} and Lemma~\ref{lem:measure} below.

We may now state a first lemma which is the extension of Step 1 in the proof of Proposition~\ref{prop:main2d} to the more general setting. We recall that we defined $q(x,z)$ in~\eqref{def:q} as
\[
 q(x,z)=\lt(|Du(x+ z_2,z_1)|^{\te_1} +|Du(x,z_1)|^{\te_1}\rt)\lt(|Du(x+z_1,z_2)|^{\te_2}+|Du(x,z_2)|^{\te_2}\rt).
\]
We also recall that  $(e_1,\cdots, e_{n_1})$ and $(f_1,\cdots, f_{n_2})$ denote orthonormal bases of $X_1$ and $X_2$.
\begin{lemma}\label{lem:goodzk}
 If $u$ is such that $\F_{p}(u)<\infty$, then there exist  sequences $0< r_k\le \eps_k$ tending to $0$, two numbers $\lambda_1,\lambda_2\in (1/2,\sqrt{3}/2)$  and two rotations $R_l\in \SO(X_l)$ such that 
\be\label{eq:goodqk}
 \limsup_{k\up \oo}\sum_{1\leq i \leq n_1,1\leq j\leq n_2} \int_{\Om^{\eps_k}} \frac{q(x,  r_k(\lambda_1R_1e_i+\lambda_2R_2f_j))}{ r_k^p} \,dx 
 \les \F_{p}(u).
\ee
 
 %bases $(v_1,\cdots,v_{n_1})$ and $(w_1,\cdots,w_{n_2})$ of $X_1$ and $X_2$ with 
% \[
% \lambda_1:=|v_1|=\cdots=|v_{n_1}|,\qquad \lambda_2:=|w_1|=\cdots=|w_{n_2}|,\qquad 1/4\le\lambda_1^2,\lambda_2^2\le3/4,
% \]
%such that
% \be\label{eq:goodqk}
%  \limsup_{k\up \oo}\sum_{1\leq i \leq n_1,1\leq j\leq n_2} \int_{\Om^{\lambda \eps_k}} \frac{q(x,  \eps_k(v_i+w_j))}{ \eps_k^p} \,dx 
%  \les \F_{p}(u).
% \ee
%In particular in case $(a)$ of~\eqref{def:p} i.e. for $\te\le 1$ and $p=P(\te_1,\te_2)=2$ this implies (recall~\eqref{triangle}),
%\be\label{eq:goodD2}
% \limsup_{k\up \oo} \sum_{1\leq i \leq n_1,1\leq j\leq n_2} \int_{\Om^{\lambda \eps_k}} \frac{|D[D u(\cdot,  \eps_k w_j)](x, \eps_k v_i)|^\te}{ \eps_k^2} \,dx \les\F( u).
%\ee

\end{lemma}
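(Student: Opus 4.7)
The strategy extends Step~1 of the proof of Proposition~\ref{prop:main2d} to general dimensions $n_1, n_2$ and arbitrary exponents $\te_1, \te_2$, and splits naturally into three phases.

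\emph{Phase 1 (symmetrization).} Performing the translations $(x,z)\mapsto (x+z_1, z)$, $(x,z)\mapsto (x+z_2, z)$, and $(x,z)\mapsto (x+z_1+z_2, z)$ in the definition of $\E_{\eps,p}(u)$ and summing the four resulting identities, the four products that appear after expansion of $q(x,z)$ (cf.~\eqref{def:q}) assemble; a slight shrinking of the integration domain (from $\Om^\eps$ to $\Om^{2\eps}$, which absorbs the translations by vectors of norm at most $\eps$) gives
\[
\liminf_{\eps \dw 0} \int_{\R^n} \rho_\eps(z) \int_{\Om^{2\eps}} \frac{q(x,z)}{|z|^p}\, dx\, dz \;\les\; \F_p(u).
\]

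\emph{Phase 2 (rotational averaging).} To pass from this $z$-integrated bound to a bound on the finite sum
\[
J(\eps, r, R_1, R_2, \lambda_1, \lambda_2) \,:=\, \sum_{i,j} \int_{\Om^\eps} \frac{q(x, r(\lambda_1 R_1 e_i + \lambda_2 R_2 f_j))}{r^p}\, dx,
\]
I parameterize $z = r(\lambda_1 \hat\sigma_1 + \lambda_2 \hat\sigma_2)$ with $r > 0$, $\lambda_l \in (1/2, \sqrt{3}/2)$, $\hat\sigma_l \in S(X_l)$, so that $|z|^2 = r^2(\lambda_1^2 + \lambda_2^2) \in (r^2/2,\, 3r^2/2)$. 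By transitivity of the Haar action of $\SO(X_l)$ on $S(X_l)$, for any fixed index $i$ the push-forward of the Haar measure on $\SO(X_l)$ by $R_l\mapsto R_l e_i$ is the uniform measure on $S(X_l)$; hence averaging $\sum_{i,j}$ over $(R_1, R_2) \in \SO(X_1) \times \SO(X_2)$ produces $n_1 n_2$ copies of the analogous average over $(\hat\sigma_1, \hat\sigma_2)$. Combining this with the change of variables $z_l = r\lambda_l \hat\sigma_l$ in $X_l$ (whose Jacobian is bounded above and below since $\lambda_l \sim 1$), the radial symmetry of $\rho_\eps$, and the comparability $r \sim |z|$, integration against $r^{n-1} \rho(r/\eps) \eps^{-n}\, dr$ combined with Phase~1 yields
\[
\liminf_{\eps \dw 0} \int_0^\eps \! \int_{\SO(X_1)\times\SO(X_2)} \! \int_{(1/2,\sqrt{3}/2)^2} \! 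J(\eps, r, R_1, R_2, \lambda_1, \lambda_2)\, \frac{r^{n-1}\rho(r/\eps)}{\eps^n}\, d\lambda_1 d\lambda_2\, dR_1 dR_2\, dr \;\les\; \F_p(u).
\]

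\emph{Phase 3 (extraction).} Select $\eps_k \dw 0$ realizing the $\liminf$ above. The substitution $s := r/\eps_k \in (0,1)$ renders the measure $s^{n-1}\rho(s)\, ds \otimes dR_1 \otimes dR_2 \otimes d\lambda_1 \otimes d\lambda_2$ on the fixed parameter space $(0,1)\times\SO(X_1)\times\SO(X_2)\times(1/2,\sqrt{3}/2)^2$ independent of $\eps_k$. Fatou's lemma applied to the non-negative quantities $J(\eps_k, \eps_k s,\cdot)$ then gives $\int \liminf_k J(\eps_k, \eps_k s, \cdot) \, d(\text{measure}) \les \F_p(u)$; by Markov's inequality the set of parameters on which this liminf is at most $C\F_p(u)$ has positive measure. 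Fixing a point $(s, R_1, R_2, \lambda_1, \lambda_2)$ in this set, extracting a subsequence $(k_m)$ along which $J(\eps_{k_m}, \eps_{k_m} s, R_1, R_2, \lambda_1, \lambda_2)$ approaches its liminf, and setting $r_m := \eps_{k_m} s \le \eps_{k_m}$, yields~\eqref{eq:goodqk}.

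The main delicate point is Phase~2: a careful Jacobian computation is needed to verify that the averaged weight $r^{n-1}\rho(r/\eps)\eps^{-n}\, d\lambda_1 d\lambda_2 d\hat\sigma_1 d\hat\sigma_2\, dr$ reconstructs $\rho_\eps(z)/|z|^p\, dz$ up to bounded factors after the change of variables, and that the combinatorial factor $n_1 n_2$ from summing over the basis indices is correctly produced by Haar averaging. The restriction $\lambda_l \in (1/2, \sqrt{3}/2)$ means only directions $z$ with $|z_1|$ comparable to $|z_2|$ are ``recovered'' in the parameterization, but since the desired inequality is only one-sided this loss is harmless.
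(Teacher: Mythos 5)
Your proposal follows essentially the same route as the paper's proof: symmetrization of $q$ via translations with the usual domain shrinkage, Haar averaging over $\SO(X_1)\times\SO(X_2)$ to convert the basis-indexed sum into spherical averages (the paper packages this as identity~\eqref{moyenneintg}), then Fatou and Markov to select good parameters and extract a subsequence. The only minor imprecision is in Phase~2, where writing $z=r(\lambda_1\hat\sigma_1+\lambda_2\hat\sigma_2)$ with $r,\lambda_1,\lambda_2$ all free overparameterizes $z$ by one degree of freedom; the paper avoids this by using a single radial variable $r=|z|$ together with $\sigma\in\partial B_1$ and setting $\lambda_l=|\sigma_l|$ (equivalently, two independent radii $|z_1|,|z_2|$), but this does not affect the conclusion.
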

\begin{proof}
% Let us first point out that in case $(a)$ of~\eqref{def:p}, inequality~\eqref{eq:goodD2} follows from~\eqref{eq:goodqk} by the exact same argument as for~\eqref{goodz},
%and thus we just need to show~\eqref{eq:goodqk}.
For $l=1,2$, let $\nu_l$ denotes the normalized Haar measure on $\SO(X_l)$ and let $\mathbf{S}(X_l)$ denote the unit sphere of $X_l$. We also set $\nu:=\nu_1\otimes \nu_2$. We first claim that for every $g\in L^1(\R^n)$, there holds 
\begin{multline}\label{moyenneintg}
 \int_{\R^n} g(z)\, dz\\
=\dfrac1{n_1n_2}\int_{\SO(X_1)\times \SO(X_2)}  \int_{\R^n} \lt[\sum_{1\leq i\le n_1,\,1\le j\le n_2}  g(|z_1| R_1 e_i+|z_2| R_2 f_j)\rt]\,dz \, d \nu(R_1,R_2).
\end{multline}
To prove \eqref{moyenneintg}, let us notice first that for   every function $v\in L^1(\mathbf{S}(X_1))$, the integral
$\int_{\SO(X_1)} v(R_1 e) d\nu_1(R_1)$ does not depend on the choice of  $e\in \mathbf{S}(X_1) $ and therefore 
\begin{align*}
 \int_{\SO(X_1)} v(R_1 e) d\nu_1(R_1)&=\frac{1}{\h^{n_1-1}(\mathbf{S}(X_1))}\int_{\mathbf{S}(X_1)}\int_{\SO(X_1)} v(R_1 \sigma) d\nu_1(R_1) d\h^{n_1-1}(\sigma)\\
 &= \frac{1}{\h^{n_1-1}(\mathbf{S}(X_1))}\int_{\SO(X_1)} \int_{\mathbf{S}(X_1)}v(R_1 \sigma)  d\h^{n_1-1}(\sigma)d\nu_1(R_1)\\
 &\stackrel{\sigma'=R_1 \sigma}{=} \frac{1}{\h^{n_1-1}(\mathbf{S}(X_1))}\int_{\SO(X_1)} \lt(\int_{\mathbf{S}(X_1)}v( \sigma')  d\h^{n_1-1}(\sigma')\rt)d\nu_1(R_1)\\
 &=\frac{1}{\h^{n_1-1}(\mathbf{S}(X_1))}\int_{\mathbf{S}(X_1)}v( \sigma)  d\h^{n_1-1}(\sigma).
\end{align*}

% Let us note $N_j= (n_l-1)n_l/2$ the dimension of $\SO(X_l)$ for $j=1,2$. Let $e\in X_1$, for $v\in L^1(\mathbf{S}(X_1))$, using the co-area formula, we have
% \begin{align*}
% \int_{\SO(X_1)} v(R_1 e)\,d\h^{N_1}(R_1)&=\int_{\mathbf{S}(X_1)} \h^{N_1-n_1+1}\lt(\{R_1\in \SO(X_1):R_1e=\sigma_1\}\rt)\,v(\sigma_1)\,d\h^{n_1-1}(\sigma_1)\\
% &=\h^{N_1-n_1+1}\lt(\SO(\R^{n_1-1})\rt)\, \int_{\mathbf{S}(X_1)} v(\sigma_1)\,d\h^{n_1-1}(\sigma_1).
% \end{align*}
For $g_1\in L^1(X_1)$, we deduce from the above formula and of a decomposition in polar coordinates that  
\begin{align*}
 \int_{X_1} g_1&= \int_0^{\infty} \h^{n_1-1}(\mathbf{S}(X_1)) \lt\{\int_{\SO(X_1)} g_1(rR_1 e)\,d\nu_1(R_1)\rt\} r^{n_1-1}\, dr\\
 &\stackrel{r=|z_1|}{=}  \int_{X_1} \lt\{\int_{\SO(X_1)} g_1(|z_1|R_1 e)\,d\nu_1(R_1)\rt\} \, dz_1\\
 &\stackrel{\text{Fubini}}{=} \int_{\SO(X_1)}  \lt(  \int_{X_1} g_1(|z_1|R_1 e)\, dz_1\rt) d\nu_1(R_1).
\end{align*}
Combining this with the analog formula for $f\in \mathbf{S}(X_2)$, $g_2\in L^1(X_2)$ and using Fubini, we have for $g\in L^1(\R^n)$,
\[
\int_{\R^n} g(z)\, dz=\int_{\SO(X_1)\times \SO(X_2)} \lt[ \int_{\R^n}  g(|z_1| R_1 e+|z_2| R_2 f)\,dz \rt]
\, d\nu(R_1,R_2).
\]
Taking $e=e_i$ and $f=f_j$ in the above identity and summing over $1\leq i\le n_1$, $1\le j\le n_2$, we obtain \eqref{moyenneintg}.\\

Define 
\[
g(z):=\dfrac{\rho_\eps(z)}{|z|^p}\int_{\Om^{2\eps}}q(x,z)\, dx,
\]
so that arguing as for \eqref{q2d} we have 
\[
 \int_{\R^n} g(z) \,dz\les \E_{\eps,p}(u).
\]

 Using \eqref{moyenneintg}, polar coordinates $z=r\sigma$, with $r>0$, $\sigma\in \pt B_1$ and the  change of variables $r=\eps s$, we obtain
\[
\int_{\SO(X_1)\times \SO(X_2)} 
\int_{\pt B_1}\int_0^{\oo}  
			Q_\eps(s,\sigma,R_1,R_2)
			s^{n-1}\rho(s)\, ds \, d\h^{n-1}(\sigma) \, d\nu(R_1,R_2)
			 \les\E_{\eps,p}(u),
\]
where we note 
\[
Q_\eps(s,\sigma,R_1,R_2):=\sum_{1\leq i\le n_1,\,1\le j\le n_2} 
			\int_{\Om^{2\eps}}\dfrac{q(x,s \eps |\sigma_1| R_1 e_i+s \eps |\sigma_2| R_2 f_j)}{s^p\,\eps^p}\,
			dx.
\] 

Passing to the infimum limit in $\eps$, using Fatou Lemma and then Markov inequality, we find that  there exist $R_1\in\SO(X_1)$, $R_2\in \SO(X_2)$, $\sigma\in \pt B_1$ with $1/2\le|\sigma_1|,|\sigma_2|\le\sqrt{3}/2$ and { $s\in [c,1)$,  where $c>0$ only depends on the kernel $\rho$}, such that
\[
\liminf_{\eps\dw 0} \sum_{1\leq i\le n_1,\,1\le j\le n_2} 
			\int_{\Om^{2\eps}}\dfrac{q(x,s \eps |\sigma_1| R_1 e_i+s \eps |\sigma_2| R_2 f_j)}{s^p\,\eps^p}\,
			dx
%\sum_{1\leq i\le n_1,\,1\le j\le n_2} \int_{\Om^{\eps}}\dfrac{q(x,s\eps |\sigma_1| R_1 e_i+s \eps |\sigma_2| R_2 f_j)}{s^p\,\eps^p}\, dx
\les\F_{p}(u).
\]
%Eventually, we set $v_i:=|\sigma_1|R_1 e_i$, for $i=1,\cdots,n_1$ and $w_j:=|\sigma_2|R_2 f_j$, for $j=1,\cdots,n_2$, we have 
%\[
%\liminf_{\eps\dw 0}\sum_{1\leq i\le n_1,\,1\le j\le n_2} 
%			\int_{\Om^{\eps}}\dfrac{q(x,s\eps  v_i+s\eps w_j)}{s^p\,\eps^p}\, dx
%\les\F_{p}(u).
%\]
Extracting a subsequence $\eps'_k$ realizing the liminf we find
\[\limsup_{k\up \infty} \sum_{1\leq i\le n_1,\,1\le j\le n_2} 
			\int_{\Om^{2\eps'_k}}\dfrac{q(x, (s\eps'_k) [|\sigma_1| R_1 e_i+ |\sigma_2| R_2 f_j])}{\,(s\eps'_k)^p}\,
			dx\les\F_{p}(u).
\]
Noting $\lambda_l:= |\sigma_l|$ for $l=1,2$, $\eps_k:=2\eps'_k$ and $r_k:= s\eps'_k$ we conclude the proof of \eqref{eq:goodqk}.
\end{proof}

 \begin{remark}\label{remark:normalisation}
With the notation of the lemma, we define the map $A\in \GL(\R^n)$ by  $Az:=\lambda_1 R_1 z_1+ \lambda_2 R_2 z_2$.
Making the change of variables $x=A \hat {x}$  and $\hat{u}(\hat x)=u(A \hat{x})$ in~\eqref{eq:goodqk}, defining $\hat{\eps}_k:=\max(\lambda_1^{-1},\lambda_2^{-1})\eps_k$ 
and observing that $[A^{-1}(\Om)]^{\hat{\eps}_k}\subset A^{-1}(\Om^{\eps_k})$,
we find
\[
\limsup_{k\up \oo}\sum_{1\leq i \leq n_1,1\leq j\leq n_2} \int_{[A^{-1}(\Om)]^{\hat{\eps}_k}} \dfrac{\hat q(x,r_k(e_i+f_j))}{ r_k^p}
\,dx \les \F_{p}(u)\sim \F_{p}(\hat{u}), 
\]
with the definition of $\hat q$ modeled on the definition of $q$:
\[
\hat q(x,z):=\lt(|D\hat u(x+ z_2,z_1)|^{\te_1} +|D\hat u(x,z_1)|^{\te_1}\rt)\lt(|D\hat u(x+z_1,z_2)|^{\te_2}+|D\hat u(x,z_2)|^{\te_2}\rt).
\]

%  \begin{multline*}
%  \lim_{k\up \oo} \int_{A^{-1}\Omega{ r_k}} \frac{\lt(|D\hat{u}(\hat{x}+ f_j,e_i)|^{\te_1} +|D\hat{u}(\hat x,e_i)|^{\te_1}\rt)\lt(|D\hat{u}(\hat{x}+e_i,f_j)|^{\te_2}+|D\hat{u}(\hat{x},f_j)|^{\te_2}\rt)}{ r_k^p} d\hat x\\
%  \les \F_{p}(u)\sim \F_{p}(\hat{u}). 
%  \end{multline*}
Since, $X_1$ and $X_2$ are stable by $A$, we have $u\in \U(\Om) \Longleftrightarrow \hat u\in \U(A^{-1}\Om)$. Therefore, up to this change of variables, 
we may always assume that~\eqref{eq:goodqk} holds true with $\lambda_l R_l=\Id_{X_l}$ for $l=1,2$.
 \end{remark}

\begin{lemma}\label{lem:measure}
Let $u\in L^1_{loc}(\Omega)$ and let  $(\eps_k)$, $(r_k)$  be two sequences with $0<r_k\leq \eps_k\dw0$. For $i\in \{1,\cdots,n_1\}$ and $j\in \{1,\cdots,n_2\}$, we define
\[
 \mathcal{F}_{i,j}(u):=\liminf_{k\up \oo} \int_{\Om^{\eps_k}} \frac{q(x, r_k(e_i+f_j))}{r_k^{P(\te_1,\te_2)}} \,dx.
\]
(i) If $\te=1$ or $[\te<1$ and  $\|u\|_\oo\leq 1]$ (in both cases, $P(\te_1,\te_2)=2$), we have for $i\in\{1,\cdots,n_1\}$, $j\in \{1,\cdots,n_2\}$,
\be\label{eq:goodD2}
\G_{i,j}(u):= \liminf_{k\up \oo}  \int_{\Om^{\eps_k}} \frac{|D[D u(\cdot,  r_k f_j)](x, r_k e_i)|}{ r_k^2} \,dx \les \mathcal{F}_{i,j}(u).
 \ee
 (ii) Noting $\mu=\mu[u]$, we have for  $i\in\{1,\cdots,n_1\}$, $j\in \{1,\cdots,n_2\}$ and every $\vhi\in C^\oo_c(\Om)$,
\be
 \label{quantaintegral}
  \langle\mu_{i,j}, \vhi\rangle\ \les\  
  \begin{cases}
~\qquad  \G_{i,j}(u)  \|\vhi\|_\oo \  &\text{ in any cases,}\quad\hfill(a) \smallskip\\
 \ (\mathcal{F}_{i,j}(u))^{1/\te}\|\nb_1 \vhi\|_1^{1-1/\te}\|\vhi\|_{\oo}^{1/\te} \  &\text{ if $\theta\geq 1$  and } \te_1\le 1, \quad (b)\smallskip\\
  \ (\mathcal{F}_{i,j}(u))^{1/\te}\|\nb_1 \vhi\|_1^{\te_2/\te}\|\vhi\|_{\oo}^{\te_1/\te} \ & \text{ if }1<\te_1\le \te_2.\quad\hfill(c)
  \end{cases}
\ee
\end{lemma}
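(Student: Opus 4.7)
The plan is to establish the three parts in turn, with the main technical work concentrated in (ii)(b) and (ii)(c).

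For part~(i), I will first derive the pointwise bound $|D[Du(\cdot,z_2)](x,z_1)| \les q(x,z)$. Expressing the mixed second difference as a telescoping first difference in two ways yields
\[
|D[Du(\cdot,z_2)](x,z_1)| \le |Du(x+z_2,z_1)| + |Du(x,z_1)|
\quad\text{and}\quad
|D[Du(\cdot,z_2)](x,z_1)| \le |Du(x+z_1,z_2)| + |Du(x,z_2)|.
\]
Raising the first to the $\te_1$-th power and the second to the $\te_2$-th power, and using subadditivity $(a+b)^{\te_l}\le a^{\te_l}+b^{\te_l}$ (valid since $\te_l\le 1$), the product gives $|D[Du]|\le q(x,z)$ when $\te=1$. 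For $\te<1$ with $\|u\|_\oo\le 1$, I reduce to the $\te=1$ case via Remark~\ref{rem:te=1} by passing to shifted exponents $\te_l':=\te_l+(1-\te)/2$, absorbing the residual factors $|Du|^{(1-\te)}$ using $\|u\|_\oo\le 1$. Integrating over $\Om^{\eps_k}$, dividing by $r_k^2$ and taking $\liminf_k$ yields (i).

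For part~(ii)(a), I will use the discrete integration-by-parts identity
\[
\int \vhi(x)\,D[Du(\cdot,r_kf_j)](x,r_ke_i)\,dx \;=\; \int u(x)\,D[D\vhi(\cdot,-r_kf_j)](x,-r_ke_i)\,dx,
\]
valid once $\supp\vhi\subset\Om^{\eps_k}$, which holds for $k$ large since $\vhi\in C^{\oo}_c(\Om)$ and $\eps_k\dw 0$. Smoothness of $\vhi$ gives $r_k^{-2}\,D[D\vhi(\cdot,-r_kf_j)](x,-r_ke_i)\to \pt_{e_i}\pt_{f_j}\vhi$ uniformly, so the right-hand side divided by $r_k^2$ converges to $\langle\mu_{i,j},\vhi\rangle$ (via $u\in L^1_{loc}$ and dominated convergence). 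Since the left-hand side is pointwise dominated by $\|\vhi\|_\oo\,|D[Du]|$, passing to the $\liminf_k$ under the integral yields $|\langle\mu_{i,j},\vhi\rangle|\les \|\vhi\|_\oo\,\G_{i,j}(u)$.

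For parts~(ii)(b) and~(ii)(c), the pointwise bound from~(i) extends, via $(a+b)^s\le 2^{s-1}(a^s+b^s)$ for $s\ge 1$, to $|D[Du(\cdot,z_2)](x,z_1)|^\te \les q(x,z)$ in all cases, giving $\int |D[Du]|^\te\,dx \les r_k^{P(\te_1,\te_2)}\mathcal{F}_{i,j}(u)$. To convert this into the claimed estimate, I first perform a discrete integration by parts in the $e_i$-direction, rewriting
\[
\int \vhi\,D[Du(\cdot,r_kf_j)](x,r_ke_i)\,dx \;=\; \int D\vhi(x,-r_ke_i)\,Du(x,r_kf_j)\,dx,
\]
so that $\|\vhi\|_\oo$ can be traded against $\|\nb_1\vhi\|_1$ through the interpolation
\[
\|D\vhi(\cdot,-r_ke_i)\|_p \le (r_k\|\nb_1\vhi\|_1)^{1/p}(2\|\vhi\|_\oo)^{1-1/p},\quad p\in[1,\oo].
\]
A final H\"older inequality closes the estimate: with exponents $(\te',\te)$ in case~(b), where the subadditivity afforded by $\te_1\le 1$ gives the splitting $(1-1/\te,1/\te)$; and with a more delicate splitting $(\te_2/\te,\te_1/\te)$ in case~(c), reflecting the symmetric convexity available when both $\te_l>1$.

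The main obstacle will be in (ii)(b) and (ii)(c): a naive combination of a single IBP with H\"older yields the correct $\vhi$-norms but the wrong $r_k$-scaling (diverging as $r_k\dw 0$ when $\te>1$). Resolving this requires interleaving the IBP identity with the pointwise bound on $|D[Du]|^\te$ through a multilinear H\"older carefully tuned to recover simultaneously the correct exponents $(1-1/\te,1/\te)$ or $(\te_2/\te,\te_1/\te)$ on $(\|\nb_1\vhi\|_1,\|\vhi\|_\oo)$ and the matching power of $r_k$; the distinction between the two subcases tracks the stronger subadditivity available when $\te_1\le 1$.
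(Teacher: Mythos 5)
Parts (i) and (ii)(a) are essentially the paper's argument: the two telescoping expressions for the mixed second difference, subadditivity for $\te_l\le 1$ (or the shifted exponents of Remark~\ref{rem:te=1} to reduce $\te<1$ to $\te=1$), and for (ii)(a) a double discrete integration by parts. Those are fine. The paper handles (ii)(c) by a single H\"older application that reduces it to (ii)(b) with $\te_1'=1$, $\te_2'=\te_2/\te_1$; your proposal treats (c) by a direct H\"older splitting, which is plausible in principle but is really reflecting the same reduction, so no substantive difference there.

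The genuine gap is in (ii)(b). You correctly diagnose that a single IBP combined with H\"older cannot work because $\int |D_{r_k}[D_{r_k}u]|^\te\,dx\les r_k^{1-\te}\mathcal{F}_{i,j}(u)$ blows up as $r_k\dw 0$ when $\te>1$, but ``a multilinear H\"older carefully tuned'' is not a remedy, and indeed no H\"older-type manipulation alone will fix the $r_k$-scaling. The paper's key idea is a \emph{nonlinear truncation}: from the pointwise bound one derives $(|Du(x+z_1,z_2)|^{\te-1}+|Du(x,z_2)|^{\te-1})\,|D[Du(\cdot,z_2)](x,z_1)|\les q(x,z)$, and using the elementary estimate $\bigl||s|^{\te-1}s-|t|^{\te-1}t\bigr|\les |s-t|\bigl(|s|^{\te-1}+|t|^{\te-1}\bigr)$ this upgrades to $|D[\phi(Du(\cdot,z_2))](x,z_1)|\les q(x,z)$ with $\phi(s)=|s|^{\te-1}s$, hence after rescaling $\int |D_{r_k}[\phi(D_{r_k}u(\cdot,f_j))](x,e_i)|\,dx\les\mathcal{F}_{i,j}(u)$ \emph{uniformly in $k$}. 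One then linearizes $\phi$ near the origin via the Lipschitz truncation $\phi_\eta$ (equal to $\eta^{1-\te}\phi$ on $[-\eta,\eta]$, the identity outside) and decomposes $D_{r_k}u=\phi_\eta(D_{r_k}u)-\chi_\eta(D_{r_k}u)$ with $\|\chi_\eta\|_\oo\le\eta$; after one IBP the $\phi_\eta$-piece contributes $\|\vhi\|_\oo\,\eta^{1-\te}\mathcal{F}_{i,j}(u)$ since $|\phi_\eta'|\le\eta^{1-\te}|\phi'|$, while the $\chi_\eta$-piece contributes $\eta\,\|\nb_1\vhi\|_1$, and optimizing in $\eta$ gives the claimed interpolation exponents. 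Without introducing $\phi$ and cutting at a scale $\eta$ to be optimized, one cannot trade $\|\vhi\|_\oo$ for $\|\nb_1\vhi\|_1$ at the correct powers, so as written your proposal for (ii)(b)--(c) does not constitute a proof.
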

\begin{proof}
 \textit{Part (i).} Let us first consider the case $\te= 1$, so that $P(\te_1,\te_2)=2$. Let $i\in\{1,\cdots,n_1\}$, $j\in \{1,\cdots,n_2\}$ and let $k\ge1$ and $x\in\Om^{\eps_k}$.  We note $z^k=r_k(e_i+f_j)$. As in~\eqref{D1D2te=1} we have
 \begin{multline*}
  |D[Du(\cdot,z_2^k)](x,z_1^k)| \\  \les\lt(|Du(x+ z_2^k,z_1^k)|^{\te_1} +|Du(x,z_1^k)|^{\te_1}\rt)\lt(|Du(x+z_1^k,z_2^k)|^{\te_2}+|Du(x,z_2^k)|^{\te_2}\rt)
  =q(x,z^k).
 \end{multline*}
Integrating in $x$ over $\Om^{\eps_k}$ we get~\eqref{eq:goodD2}.\\
In the case $\te<1$ and $u\in L^\oo(\Om)$, noting $\te'_l=\te_l/\te$ for $l=1,2$, by triangle inequality, we have for every $x\in \Om^\eps$, $z\in B_\eps$ and $l\in\{1,2\}$,
\begin{align*}
 |Du(x,z_l)|^{\te'_l} & =|u(x+z_l) -u(x)|^{\te_l'} = |u(x+z_l)-u(x)|^{(1-\te)\te_l/\te}\,|Du(x,z_l)|^{\te_l} \\
 & \leq 2^{(1-\te)\te_l/\te}\|u\|_\oo^{(1-\te)\te_l/\te}|Du(x,z_l)|^{\te_l}.
\end{align*}
Since $\te'_1+\te'_2=1$, the case $[\te<1$ and $\|u\|_\oo\leq 1]$ follows form this inequality and from the case $\te=1$ applied to the pair $(\te'_1,\te'_2)$.\medskip

\textit{Part (ii).} Let $u\in L^1_{loc}(\Om)$, $i\in\{1,\cdots,n_1\}$, $j\in\{1,\cdots,n_2\}$ and $\vhi\in C^\oo_c(\Om)$. For $k\geq 1$, we note $z^k=r_k(e_i+f_j)$. We treat the three cases (a), (b), (c) of~\eqref{quantaintegral} separately.

\textit{Case (a).}  We proceed exactly as in~\eqref{thiscomputation}.  Using discrete integration by parts, we compute 
\begin{align*}
\lt<\mu_{i,j},\vhi\rt>&\,= \int_{\Om} \partial_{e_i}\partial_{f_j} \vhi\, u\, dx = \lim_{k\up \oo} \,\dfrac1{r_k^2} \,\lt|\int_{\Om} D[D \vhi(\cdot,-z_1^k)](x,-z_2^k) u(x) \,dx\rt|\\
 &=\lim_{k\up \oo} \, \dfrac1{r_k^2} \,  \lt|\int_{\Om} \vhi D[D u(\cdot,z_1^k)](x,z_2^k) \,dx\rt|
 \label{thiscomputation}
 \\
 &\le\,\|\vhi\|_\oo\, \liminf_{k\up \oo}\int_{\Om^{\eps_k}} \, \dfrac{|D[Du(\cdot,z_1^k)](x,z_2^k)|}{r_k^2} \,dx = \,\|\vhi\|_\oo\,\G_{i,j}(u).
\end{align*}
This proves the claim.
\medskip

\textit{Case (c).} Let us show that this case follows from case (b). Let us assume that  $1\le \te_1\le \te_2$. By H\"older inequality,
 \begin{multline*}
  \int_{\Omega^{\eps_k}}\frac{\lt(|Du(x+ z^k_2,z^k_1)| +|Du(x,z^k_1)|\rt)\lt(|Du(x+z^k_1,z^k_2)|^{{\te_2}/{\te_1}}+|Du(x,z^k_2)|^{{\te_2}/{\te_1}}\rt)}{ r_k^{2+{\te_2}/{\te_1}}} \,dx \\
  \les |\Om|^{\frac{\te_1-1}{\te_1}}\lt(\int_{\Omega^{\eps_k}}\frac{\lt(|Du(x+ z^k_2,z^k_1)|^{\te_1} +|Du(x,z^k_1)|^{\te_1}\rt)\lt(|Du(x+z^k_1,z^k_2)|^{\te_2}+|Du(x,z^k_2)|^{\te_2}\rt)}{ r_k^{2\te_1+\te_2}} \,dx\rt)^{\frac1{\te_1}}.
 \end{multline*}
Applying~(\ref{quantaintegral}.b) with $\te'_1=1$, $\te'_2={\te_2}/{\te_1}$ and $P(\te'_1,\te'_2)=1+\te_1'+\te'_2=2+{\te_2}/{\te_1}$  yields~(\ref{quantaintegral}.c).\medskip

\textit{Case (b).}  From now on we assume $ \te_1\leq 1$, $\te \geq1$  and $p =P(\te_1,\te_2)= 1+\te$. Let $x\in \Om$ and $z\in\R^n$ be such that $x\in \Om^{|z|}$.
%Let us recall the notation (see~\eqref{def:q})
%\[
% q(x,z)=\lt(|Du(x+ z_2,z_1)|^{\te_1} +|Du(x,z_1)|^{\te_1}\rt)\lt(|Du(x+z_1,z_2)|^{\te_2}+|Du(x,z_2)|^{\te_2}\rt).
%\]
Arguing as in~\eqref{triangle}, we obtain by triangle inequality,
\[
  |D[Du(\cdot,z_2)](x,z_1)|^{\te_1}\les |Du(x+ z_2,z_1)|^{\te_1} +|Du(x,z_1)|^{\te_1},
\]
and\footnote{Remark that this is where we  used  the hypothesis $\te_1\le1$ : in case (c) ($\te_1,\te_2>1$), this inequality fails and the present method breaks down.} 
\[
 |D[Du(\cdot,z_2)](x,z_1)|^{1-\te_1}\les \lt( |Du(x+z_1,z_2)|^{\te_2}+|Du(x,z_2)|^{\te_2}\rt)^{(1-\te_1)/\te_2}.
 \]
  Taking the product of the last two inequalities with $(|Du(x+z_1,z_2)|^{\te_2}+|Du(x,z_2)|^{\te_2})^{(\te-1)/\te_2}$, we get 
\[
(|Du(x+z_1,z_2)|^{\te_2}+|Du(x,z_2)|^{\te_2})^{\frac{\te-1}{\te_2}} \,|D[Du(\cdot,z_2)](x,z_1)|\les q(x,z).
\]
We use this estimate in the form
\be
\label{nonlinestim}
(|Du(x+z_1,z_2)|^{\te-1}+|Du(x,z_2)|^{\te-1})\,|D[Du(\cdot,z_2)](x,z_1)|\les q(x,z).
\ee
%  \lt( |Du(x+z_1,z_2)|^{\te-1}+|Du(x,z_2)|^{\te-1}\rt)\les  |Du(x+z_1,z_2)|^{\te_2}+|Du(x,z_2)|^{\te_2},
%\]
We now set $\phi(s):= |s|^{\te-1} s$ for $s\in\R$. Using the estimate 
\[
 \lt| |s|^{\te-1} s- |t|^{\te-1} t\rt|\les |s-t| \lt(|s|^{\te-1} +|t|^{\te-1} \rt),
\]
and~\eqref{nonlinestim}, we  have
\begin{align*}
|D[ \phi(Du(\cdot,z_2))](x,z_1)|& \les|D[Du(\cdot,z_2)](x,z_1)| \lt( |Du(x+z_1,z_2)|+|Du(x,z_2)|\rt)^{\te-1}\\
&\les q(x,z).
\end{align*}
Applying this estimate with $z=z^k$, integrating in $x\in\Om^{\eps_k}$ and sending $k$ to $\oo$, we have: 
\be\label{estimphi}
 \liminf_{k \to \infty} \int_{\Om^{\eps_k}} \frac{|D[ \phi(Du(\cdot,z^k_2))](x,z^k_1)|}{r_k^{1+\te}}\,dx \les \mathcal{F}_{i,j}(u).
\ee
For $v:\Om\to\R$, $e\in \mathbf{S}(X_1)\cup \mathbf{S}(X_2)$, $r>0$ and $x\in \Om^{r}$, we introduce the discrete derivative, 
\[D_{r} v(x,e):=\frac{D v(x,r e)}{r}=\dfrac{v(x+re)-v(x)}r.\]
  With this notation,~\eqref{estimphi} rewrites as
\be\label{estimphidelta}
 \liminf_{k \to \infty} \int_{\Om^{\eps_k}} |D_{r_k}[ \phi(D_{r_k} u(\cdot, f_j))](x,e_i)| \,dx \les \mathcal{F}_{i,j}(u).
\ee
For smooth functions $u$, this inequality would provide a control on the $L^1$-norm of the function $\pt_{e_i}\phi(\pt_{f_j}u)$. Here, we only assume $u\in L^1_{loc}(\Om)$ and it is difficult to give a meaning to the nonlinear term $\phi(\pt_{f_j}u)$. 
For this reason, we linearize $\phi$ away from $0$. 
For $\eta>0$, we introduce the function $\phi_\eta$ given by 
\[
 \phi_\eta(s):=\begin{cases}
               \frac{|s|^{\te-1} s}{\eta^{\te-1}}  &\text{if } |s|\le \eta,\\
               s &\text{if } |s|\ge \eta,
              \end{cases}
\]
so that $\phi_\eta $ is an odd, Lipschitz continuous function satisfying $0\le \phi_\eta'\leq \eta^{1-\te} \phi'$ on $\R_+$. As a consequence,
\[
|D_{r_k}[\phi_\eta(v)](x, e_i)|\leq \eta^{1-\te}|D_{r_k}[\phi(v)](x, e_i)|. 
\]
Thus, from~\eqref{estimphidelta} we have,
\be\label{estimphieta}
  \liminf_{k \to \infty} \int_{\Om^{\eps_k}} |D_{r_k}[ \phi_\eta(D_{r_k} u(\cdot, f_j))](x, e_i)| \,dx \les\eta^{1-\te} \mathcal{F}_{i,j}(u).
\ee
For $\vhi\in C^\oo_c(\Om,\R)$,
% \[
%D_{r_k}[D_{r_k} \vhi(\cdot,-r_k e_i)](x,-r_k f_j) \ \stackrel{k\up\oo}\longto\ \frac{\partial^2 \vhi}{\partial e_i \partial f_j}(x)\quad \text{uniformly for }x\in\Om.
% \]
using the dominated convergence theorem and a discrete integration by parts, we compute
 \begin{align}
  \langle \mu_{i,j}, \vhi  \rangle&=\lim_{k\up\oo}\int_{\Om^{\eps_k}} u \frac{\partial^2 \vhi}{\partial e_i \partial f_j}  \,dx\, =\,\lim_{k\up \oo} \int_{\Om^{\eps_k}} u(x)D_{r_k}[D_{r_k} \vhi(\cdot,- e_i)](x,- f_j)  \,dx\nonumber\\
  &=\lim_{k\up \oo} \int_{\Om^{\eps_k}} D_{r_k} u(x, f_j) D_{r_k} \vhi(x,- e_i)  \,dx.\label{muvhifg}
 \end{align}
 We introduce the decomposition $D_{r_k} u(x, f_j)= \phi_\eta (D_{r_k} u(x, f_j))-\chi_\eta(D_{r_k} u(x, f_j))$ where $\chi_\eta(s):= \phi_\eta(s)-s$ satisfies
\[
 \|\chi_\eta\|_\oo=\sup_{|s|\le \eta} \lt|s\lt(1-\frac{|s|^{\te-1}}{\eta^{\te-1}}\rt)\rt|\le \eta.
\]
For $k$ large enough (so that $\supp\vhi\subset\Omega^{\eps_k}$),  we have
\begin{multline*}
 \lt|\int_{\Omega^{\eps_k}} D_{r_k} u(x,f_j) \, D_{r_k} \vhi(x,-e_i)  \,dx\rt|\\ 
 \le \lt|\int_{\Omega^{\eps_k}} \phi_\eta(D_{r_k} u(x,f_j)) \, D_{r_k} \vhi(x,-e_i)  \,dx\rt|
 +\lt|\int_{\Omega^{\eps_k}} \chi_\eta(D_{r_k} u(x,f_j)) \, D_{r_k} \vhi(x,-e_i)  \,dx\rt|\\
% &\le \lt|\int_{\Omega^{\eps_k}} D_{r_k}[\phi_\eta(D_{r_k} u(\cdot,f_j))](x,e_i)  \vhi(x)  \,dx \rt|\\
% &\qquad \qquad +\int_{\Omega^{\eps_k}} \lt|\chi_\eta(D_{r_k} u(x,f_j))\rt| \lt|D_{r_k} \vhi(x,-e_i)\rt|  \,dx\\
 \le \|\vhi\|_\oo  \int_{\Omega^{\eps_k}} |D_{r_k}[\phi_\eta(D_{r_k} u(\cdot,f_j))](x,e_i)|  \,dx + \eta \|\pt_{e_i}\vhi\|_{L^1},%\int_{\Omega^{\eps_k}}  \lt|D_{r_k} \vhi(x,-e_i)\rt|  \,dx
\end{multline*}
where we used a discrete integration by parts to treat the  first term and the bound $\|\chi_\eta\|_\oo\leq\eta$ for the second term.
Using~\eqref{muvhifg} and~\eqref{estimphieta}, we obtain,
\[
 \langle \mu_{i,j}, \vhi  \rangle\les \|\vhi\|_\oo\eta^{1-\te} \mathcal{F}_{i,j}(u)+ \eta \|\nabla_1 \vhi\|_1.
\]
Eventually, optimizing in $\eta$ by choosing $\eta^\theta=  \mathcal{F}_{i,j}(u) \|\vhi\|_\oo  / \|\nabla_1\vhi\|_1$, we get~(\ref{quantaintegral}.b).
\end{proof}
\begin{proof}[Proof of Proposition~\ref{propquant}]
For $\te> 1$ the proposition corresponds to (\ref{quantaintegral}.b) and~(\ref{quantaintegral}.c). For $\te\leq 1$, the proposition follows from~\eqref{eq:goodD2} and~(\ref{quantaintegral}.a).
\end{proof}

We can now show that if $\F(u)=0$ then $u$ depends only on the variables in $X_1$ or only on the variables in $X_2$.

\begin{theorem}[Theorem~\ref{theointro:zero}]\label{theo:zero}
If $u\in L(\Om)$ is such that $\F(u)=0$, then $u\in \U(\Om)$. 
\end{theorem}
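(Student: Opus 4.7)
By Remark~\ref{remarkLinfty} we may assume $u\in L^\infty(\Om)$ with $\|u\|_\infty\le 1$. The plan has three stages: (i) deduce $\mu[u]=\nabla_1\nabla_2 u=0$ as a distribution; (ii) use this to obtain an additive decomposition $u(x)=u_1(x_1)+u_2(x_2)$; (iii) decouple the energy on this decomposition and invoke a BBM-type rigidity argument in one of the two slice variables.

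The first stage is immediate from Proposition~\ref{propquant}: in each of the three regimes (a), (b), (c) of Definition~\ref{def:p}, the right-hand side of the corresponding quantitative estimate is a positive power of $\F(u)$ times seminorms of the test $\vhi$. Since $\F(u)=0$, this yields $\langle\mu[u],\vhi\rangle=0$ for every $\vhi\in C^\infty_c(\Om,\R^{n_1\times n_2})$, hence $\nabla_1\nabla_2 u\equiv 0$ in $\mathcal{D}'(\Om)$. A standard distributional argument using the connectedness of $\Om_1$ and $\Om_2$ (testing against tensor products $\vhi_1(x_1)\vhi_2(x_2)$ and integrating twice) then provides $u_l\in L^\infty(\Om_l)$ such that $u(x)=u_1(x_1)+u_2(x_2)$ almost everywhere on $\Om$.

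Plugging this decomposition into~\eqref{Eeps} and using $Du(x,z_l)=Du_l(x_l,z_l)$, Fubini's theorem gives
\[
\E_{\eps,p}(u)=\int_{\R^n}\frac{\rho_\eps(z)}{|z|^p}\,A_\eps(z_1)\,B_\eps(z_2)\,dz,
\]
with $A_\eps(z_1):=\int_{\Om_1^\eps}|Du_1(x_1,z_1)|^{\te_1}\,dx_1$ and $B_\eps(z_2):=\int_{\Om_2^\eps}|Du_2(x_2,z_2)|^{\te_2}\,dx_2$. Combining $\F(u)=0$ with the spherical averaging of Lemma~\ref{lem:goodzk} applied to this product form, and extracting a sequence of good scales $r_k\le\eps_k\dw 0$ together with an appropriate dichotomy between the two factors, one obtains that for some $l\in\{1,2\}$ and a subsequence,
\[
\int_{X_l}\hat\rho_{\eps_k}(\zeta)\int_{\Om_l^{\eps_k}}\frac{|u_l(x_l+\zeta)-u_l(x_l)|^{\te_l}}{|\zeta|^{\te_l}}\,dx_l\,d\zeta\ \stackrel{k\up\oo}\longto\ 0,
\]
where $\hat\rho_{\eps_k}$ is a suitable mollifier on $X_l$. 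The characterization of Sobolev-type spaces by Bourgain--Brezis--Mironescu~\cite{BBM2001}, possibly combined with a truncation argument for $\te_l<1$ that exploits the $L^\infty$ bound on $u_l$, then forces $u_l$ to be constant on the connected set $\Om_l$. Consequently $u$ depends only on $x_{3-l}$ and belongs to $\U(\Om)$.

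The main technical obstacle is the dichotomy in the third stage: we must ensure that the vanishing of the \emph{product} $A_\eps(z_1)B_\eps(z_2)$ integrated against $\rho_\eps(z)/|z|^p$ truly propagates to a genuine BBM-seminorm in one of the two variables, rather than being absorbed by scale imbalance between $|z_1|$ and $|z_2|$. This is exactly where the critical value $p=P(\te_1,\te_2)$ must be used sharply: it matches the Lipschitz scaling $A_\eps(z_1)\sim|z_1|^{\te_1}$, $B_\eps(z_2)\sim|z_2|^{\te_2}$ suggested by Proposition~\ref{prop:C1}~(i), modulated in cases (b) and (c) by the linearisation trick already used in the proof of Lemma~\ref{lem:measure}. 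The polar/spherical decomposition of Lemma~\ref{lem:goodzk} is what should make this precise in all three regimes.
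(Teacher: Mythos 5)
Your overall architecture matches the paper's: first use the quantitative estimate to deduce $\nabla_1\nabla_2 u=0$; integrate to get $u=u_1(x_1)+u_2(x_2)$ with $u_l\in L^\infty(\Om_l)$; then decouple the energy on this decomposition and run a dichotomy to conclude one of the $u_l$ is constant. Stages (i) and (ii) are fine. The gap is in stage (iii), precisely at the step you flag as the ``main technical obstacle'' but then do not resolve.

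Your claimed decoupled limit,
\[
\int_{X_l}\hat\rho_{\eps_k}(\zeta)\int_{\Om_l^{\eps_k}}\frac{|u_l(x_l+\zeta)-u_l(x_l)|^{\te_l}}{|\zeta|^{\te_l}}\,dx_l\,d\zeta\ \longto 0,
\]
is too weak to conclude that $u_l$ is constant whenever $\te_l<1$. A bounded step function already makes this quantity vanish: for $u_l=\sgn(x_l)$ with $n_l=1$, the inner integral scales like $r$, so divided by $r^{\te_l}$ it is $O(r^{1-\te_l})\to 0$. Consequently no ``BBM plus truncation'' argument can force $u_l$ to be constant from such a limit; the characterization of Bourgain--Brezis--Mironescu rigidly detects $W^{1,p}$ (or $BV$) only from the difference quotient raised to a power $\ge 1$. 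The truncation you want to invoke must therefore be applied \emph{before} the dichotomy, not after.

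The mechanism the paper uses, which your writeup omits, is an exponent upgrade built on two facts: (1) the elementary algebraic inequality $\max(1,\te_1)+\max(1,\te_2)\le P(\te_1,\te_2)$ valid in all three regimes of Definition~\ref{def:p}, and (2) the pointwise bound $|Du_l|^{\max(1,\te_l)}\les |Du_l|^{\te_l}\,\|u\|_\infty^{(1-\te_l)_+}$. Combined with $r_k\le 1$, these give
\[
\frac{|Du_1|^{\max(1,\te_1)}\,|Du_2|^{\max(1,\te_2)}}{r_k^{\,\max(1,\te_1)+\max(1,\te_2)}}\ \les\ \frac{|Du_1|^{\te_1}\,|Du_2|^{\te_2}}{r_k^{\,P(\te_1,\te_2)}},
\]
so the product of the \emph{upgraded} factors already tends to zero. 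Now the dichotomy between the two factors is applied to quantities with exponent $\max(1,\te_l)\ge 1$, and a single application of H\"older brings each factor down to the $L^1$ first--difference quotient $\int_{\Om_l^{\eps_k}} |Du_l(x_l,r_ke_i)|/r_k\,dx_l$, whose vanishing does force $\partial_{e_i}u_l=0$. Also note in passing that your appeal to Lemma~\ref{lem:goodzk} ``applied to this product form'' conflates the order of operations: the paper first extracts good directions and scales $r_k(e_i+f_j)$ from Lemma~\ref{lem:goodzk} for the original $u$, and only afterwards specializes to the split $u=u_1+u_2$ to factor the integral.

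In short: the first two stages are correct and agree with the paper, but the third stage as written does not go through for $\te_l<1$; the missing ingredient is the exponent upgrade to $\max(1,\te_l)$, made legitimate by $\max(1,\te_1)+\max(1,\te_2)\le P(\te_1,\te_2)$, and it must be performed \emph{before} the product dichotomy.
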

\begin{proof}
Let $u\in L(\Om)$ with $\F(u)=0$. As in the proof of Proposition~\ref{prop:main2d}, we may assume that $u$ is bounded. Applying Lemma~\ref{lem:goodzk} (see also Remark~\ref{remark:normalisation}) we find sequences  $0< r_k\leq \eps_k\dw 0$ such that up to a change of coordinates,  
\begin{equation}\label{goodzzero}%\be\label{goodzrig}
 \lim_{k\up \oo} \sum_{i,j} \int_{\Om^{\eps_k}}\frac{q(x,r_k(e_i+f_j))}{r_k^{P(\te_1,\te_2)}}\,dx 
 %\le\lim_{k\up \oo}\sum_{i,j}  \int_{\Om^{\eps_k}} \frac{q(x,r_k(e_i+f_j))}{r_k^{1+\te}}\,dx
 =0.   
\end{equation}%\ee
By Lemma~\ref{lem:measure}, we get
 \begin{equation*}%\label{diffzero}
\mu:=\mu[u]=  \nb_1\nb_2 u=0\qquad\text{in }\mc{D}'(\Om).
 \end{equation*} 
Integrating twice this identity, since $\Om_1$ and $\Om_2$ are connected, there exist two distributions $u_l\in\mc{D}'(\Om_l)$, $l\in\{1,2\}$ such that $u=u_1\otimes \un_{\Om_2} +\un_{\Om_1}\otimes u_2$ in $\mc{D}'(\Om)$. Let $\vhi_2\in C^\oo_c(\Om_2,\R_+)$ with $\int \vhi_2=1$. Using test functions of the form $\vhi(x)=\vhi_1(x_1)\vhi_2(x_2)$, we have, since $u$ is bounded,
 \[
 \lt<u_1,\vhi_1\rt> \le \lt( \|u\|_\oo +\lt|\lt<u_2,\vhi_2\rt>\rt|  \rt)\|\vhi_1\|_{L^1} \qquad \text{for every }\vhi_1\in C^\oo_c(\Om_1).
 \]
 We deduce that $u_1\in L^\oo(\Om_1)$ and similarly $u_2\in L^\oo(\Om_2)$. In conclusion, we have two functions $u_l\in L^\oo(\Om_l)$, $l\in\{1,2\}$ with $u(x)=u_1(x_1)+u_2(x_2)$ for $x\in \Om$.\\
%We now claim that there exist functions $u_1$ and $u_2$ such that 
%\be\label{split}
%u(x)\,=\, u_1(x_1)+u_2(x_2).
%\ee
%Up to convolving~\eqref{diffzero}, we can assume that $u$ is smooth. For $x_2\in \Om_2$, let $v_{x_2}=\nb_2 u(\cdot+x_2)$. Since $\nb_1 v_{x_2}=0$ and since $\Om_{1}$
%is connected, there exists $\xi: \Om_{2}\to \R^{n_2}$ such that $v_{x_2}(x_1)=\xi(x_2)$. Since $\Om_{2}$ is simply connected and $\nabla_2\times \xi=0$, there exists $u_2 :\Om_{2}\to \R$ such that $\xi=\nb_2 u_2$.
%Then, for every $x_1\in \Om_{1}$, letting $u_{x_1}=u(x_1+\cdot)$, 
%$\nb_2( u_{x_1}-u_2)=0$ so that there exists $u_1    :\Om_{1}\to \R$ such that $u_{x_1}-u_2=u_1$, which proves~\eqref{split}.
%Since $u$ is bounded, we see that $u_1$ and $u_2$ are also bounded.\\
 Since $q(x,r_k(e_i+f_j))\ge |D u(x,r_k e_i)|^{\te_1} |Du (x,r_k f_j)|^{\te_2} $, \eqref{goodzzero} implies that   
\[%\be\label{goodzrig}
 \lim_{k\up \oo} \sum_{i,j} \int_{\Om^{\eps_k}}\frac{|D u(x,r_k e_i)|^{\te_1} |Du (x,r_k f_j)|^{\te_2}}{r_k^{P(\te_1,\te_2)}}\,dx 
 %\le\lim_{k\up \oo}\sum_{i,j}  \int_{\Om^{\eps_k}} \frac{q(x,r_k(e_i+f_j))}{r_k^{1+\te}}\,dx
 =0.   
\]%\ee
It is easy to check from the formula of $P(\te_1,\te_2)$ of Definition~\ref{def:p} that $\max(1,\te_1)+\max(1,\te_2)\le P(\te_1,\te_2)$ for $\te_1,\te_2>0$. Moreover, for $l=1,2$, we have $|Du|^{\max(1,\te_l)}\les|Du|^{\te_l} \|u\|_\oo^{(1-\te_l)_+}$. These inequalities lead to
\[
 \lim_{k\up \oo}  \lt(\sum_{i} \int_{\Om_1^{\eps_k}} \frac{|D u_1(x_1,r_k e_i)|^{\max(1,\te_1)}}{r_k^{\max(1,\te_1)}} \,dx_1\rt)\lt(\sum_{j}  \int_{\Om_2^{\eps_k}} \frac{|D u_2(x_2,r_k f_j)|^{\max(1,\te_2)}}{r_k^{\max(1,\te_2)}} \,dx_2\rt)=0.
\]
Therefore, up to a subsequence, either 
\begin{multline*}
  \lim_{k\up \oo} \sum_{i}  \int_{\Om_1^{\eps_k}} \frac{|D u_1(x_1,r_k e_i)|^{\max(1,\te_1)}}{r_k^{\max(1,\te_1)}} \,dx_1=0 \\
 \text{ or } \qquad  \lim_{k\up \oo}\sum_{j} \int_{\Om_2^{\eps_k}} \frac{|D u_2(x_2,r_k f_j)|^{\max(1,\te_2)}}{r_k^{\max(1,\te_2)}} \,dx_2=0.
\end{multline*}
Let us assume without loss of generality that the former holds. Using H\"older inequality, this yields, for $i\in\{1,\cdots,n_1\}$, 
\[
  \lim_{k\up \oo}  \int_{\Om_1^{\eps_k}} \frac{|D u_1(x_1,r_k e_i)|}{r_k} \,dx_1=0. 
  \]
We deduce that the distributions $\pt_{e_i}u_1$ vanish for $i\in\{1,\cdots,n_1\}$ and thus (since $\Om_1$ is connected)  $u_1$ is constant in $\Omega_1$. This  concludes  the proof of the theorem.
%This implies that either the first quantity vanishes for every $i\in [1, n_1]$ or the second quantity vanishes for every $j\in [1,n_2]$. Let us assume without loss of generality that the former holds, then  we may apply 
% \cite[Th. 3.1]{MMS} to conclude that $u_1$ is constant in $\Omega_1$.  
\end{proof}

\begin{corollary}[Corollary~\ref{CoroIntro:geneGRun}]\label{Coro:geneGRun}
 For every $p\ge P(\te_1,\te_2)$, $r>0$, and every measurable function $u$, if 
 \begin{equation}\label{hyp:GRun}
   \int_{B_r}\int_{\Om^r} \frac{|u(x+z_1)-u(x)|^{\te_1} |u(x+z_2)-u(x)|^{\te_2}}{|z|^{n+p}} dx dz <\infty,
 \end{equation}
then $u\in \U(\Om^r)$.
\end{corollary}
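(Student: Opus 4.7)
The plan is to reduce the corollary to Theorem~\ref{theo:zero} applied on the product subdomain $\Om^r=\Om^r_1+\Om^r_2$. Since Theorem~\ref{theo:zero} is valid for any admissible mollifier, I fix for the purpose of this proof the specific choice $\rho:=|B_1|^{-1}\un_{B_1}$, so that $\rho_\eps(z)=|B_1|^{-1}\eps^{-n}\un_{B_\eps}(z)$. This choice is convenient because on $B_\eps$ the trivial bound $\eps^{-n}\le |z|^{-n}$ converts the $\eps^{-n}$ prefactor of $\rho_\eps$ into precisely the $|z|^{-n}$ weight that distinguishes the hypothesis integrand from the one appearing in $\E_{\eps,p}$.

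For any $\eps\in(0,r)$, using $(\Om^r)^\eps\subset\Om^r$ and the pointwise bound above on $\rho_\eps$, I obtain
\[
\E_{\eps,p}(u;\Om^r)\ \le\ |B_1|^{-1}\int_{B_\eps}\int_{\Om^r}\frac{|u(x+z_1)-u(x)|^{\te_1}\,|u(x+z_2)-u(x)|^{\te_2}}{|z|^{n+p}}\,dx\,dz.
\]
By hypothesis~\eqref{hyp:GRun}, the integrand is integrable on $B_r\times\Om^r$, and since $\un_{B_\eps}\to 0$ almost everywhere, the dominated convergence theorem yields $\E_{\eps,p}(u;\Om^r)\to 0$ as $\eps\dw 0$. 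Hence $\F_p(u;\Om^r)=0$, and since $P(\te_1,\te_2)\le p$, the monotonicity in Remark~\ref{remark1} (via $\E_{\eps,P(\te_1,\te_2)}\le\eps^{p-P(\te_1,\te_2)}\,\E_{\eps,p}$) gives also $\F(u;\Om^r)=\F_{P(\te_1,\te_2)}(u;\Om^r)=0$.

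Theorem~\ref{theo:zero} then yields $u\in\U(\Om^r)$, completing the argument. There is no real obstacle: the crux is simply the comparison between the kernel $\rho_\eps(z)/|z|^p$ and the pointwise weight $|z|^{-n-p}$, which is arranged by the explicit choice of $\rho$. The only minor point to address is the application of Theorem~\ref{theo:zero} when the intersections $\Om^r_l$ fail to be connected (or are empty): this is vacuous in the empty case, and handled otherwise by running the argument on each product of connected components of $\Om^r_1$ and $\Om^r_2$ separately.
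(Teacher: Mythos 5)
Your proof is correct and takes essentially the same approach as the paper: both pick a convenient kernel so that $\rho_\eps(z)/|z|^p$ is dominated on $B_r$ by a multiple of $|z|^{-n-p}$, conclude $\F_p(u;\Om^r)=0$, and apply Theorem~\ref{theo:zero}. The paper instead uses a (normalized) annulus kernel $\rho\propto\un_{B_1\setminus B_{1/2}}$ with the dyadic sequence $\eps_k=2^{-k}r$ and a convergent-series argument, rather than your uniform ball kernel plus dominated convergence, but this is a purely cosmetic difference.
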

\begin{proof}[Proof of Corollary~\ref{Coro:geneGRun}]
 Let $\rho:=\frac{1}{|B_r\backslash B_{r/2}|} \chi_{B_r\backslash B_{r/2}}$. Then, letting $\eps_k:=2^{-k} r$, it is readily seen that~\eqref{hyp:GRun} implies that 
 \[
 \sum_{k\geq 0}\int_{\R^n} \rho_{\eps_k}(z)  \int_{(\Omega^r)^{\eps_k}}\frac{|u(x+z_1)-u(x)|^{\te_1} |u(x+z_2)-u(x)|^{\te_2}}{|z|^{p}} \,dx \,dz=0.
 \]
 In particular, $\F_{p}(u;\Om^r)=0$ and we get $u\in \U(\Om^r)$ from Theorem~\ref{theo:zero}.
\end{proof}
In the space of Lipschitz continuous functions, we know from Proposition~\ref{rem:optimal}~(i) that the critical exponent $p$ is larger than $1+\te$.
We deduce from Theorem~\ref{theo:zero} that, as soon as $\te_1\leq 1$ or $\te_2\le 1$, this critical exponent is indeed $1+\te$.
\begin{proposition}[Proposition~\ref{Lip:intro}]\label{propLip}
Assume that  $\min(\te_1,\te_2)\leq 1$. Then, for $u\in Lip(\Om)$,  $[\F_{1+\te}(u)=0] \Longrightarrow u\in \U(\Om)$. 
\end{proposition}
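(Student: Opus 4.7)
My plan splits on whether $\te:=\te_1+\te_2\ge 1$ or $\te<1$. In the first regime the claim is immediate from Theorem~\ref{theo:zero}; in the second regime I will use the Lipschitz bound to shift exponents into the critical regime covered by Theorem~\ref{theo:zero}.

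If $\te\ge 1$, then the hypothesis $\min(\te_1,\te_2)\le 1$ places $(\te_1,\te_2)$ in range (b) of Definition~\ref{def:p} (or on its boundary with (a) when $\te=1$), so $P(\te_1,\te_2)=1+\te$ and $\F_{1+\te}(u)=\F(u)$. Theorem~\ref{theo:zero} then directly yields $u\in\U(\Om)$.

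If $\te<1$, then $1+\te<2=P(\te_1,\te_2)$ and Theorem~\ref{theo:zero} cannot be applied directly at the exponents $(\te_1,\te_2)$. I introduce the shifted exponents $\te'_l:=\te_l+(1-\te)/2>0$ for $l=1,2$, so that $\te'_1+\te'_2=1$ and $P(\te'_1,\te'_2)=2$ by case (a) of Definition~\ref{def:p}. Writing $\lambda:=\Lip(u)$, the pointwise bound $|Du(x,z_l)|\le\lambda|z|$, valid for $x\in\Om^\eps$ and $z\in B_\eps$, gives
\[
|Du(x,z_l)|^{\te'_l}=|Du(x,z_l)|^{\te_l}\,|Du(x,z_l)|^{(1-\te)/2}\le\lambda^{(1-\te)/2}|z|^{(1-\te)/2}|Du(x,z_l)|^{\te_l},
\]
and therefore
\[
\frac{|Du(x,z_1)|^{\te'_1}|Du(x,z_2)|^{\te'_2}}{|z|^{2}}\le\lambda^{1-\te}\,\frac{|Du(x,z_1)|^{\te_1}|Du(x,z_2)|^{\te_2}}{|z|^{1+\te}}.
\]
Integrating against $\rho_\eps(z)\,dz$ and $dx$ over $\Om^\eps$ produces $\E_{\eps,2}^{\te'_1,\te'_2}(u)\le\lambda^{1-\te}\E_{\eps,1+\te}^{\te_1,\te_2}(u)$, and passing to the $\liminf$ as $\eps\dw 0$ yields $\F_2^{\te'_1,\te'_2}(u)\le\lambda^{1-\te}\F_{1+\te}^{\te_1,\te_2}(u)=0$. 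Theorem~\ref{theo:zero} applied at the exponents $(\te'_1,\te'_2)$ then forces $u\in\U(\Om)$.

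I do not anticipate any serious obstacle: the entire argument is a bookkeeping reduction to Theorem~\ref{theo:zero}. The only mild subtlety is to shift both exponents symmetrically (each by $(1-\te)/2$) rather than a single one, so that the resulting pair lands precisely in the critical regime (a) of Definition~\ref{def:p} where $P(\te'_1,\te'_2)=2$ and Theorem~\ref{theo:zero} is already available.
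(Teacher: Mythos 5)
Your proof is correct and follows essentially the same strategy as the paper: when $\te\ge 1$ apply Theorem~\ref{theo:zero} directly, and when $\te<1$ use the Lipschitz bound to trade powers of $|Du|$ for powers of $|z|$ and land in a regime where Theorem~\ref{theo:zero} applies. The only cosmetic difference is that you shift both exponents symmetrically by $(1-\te)/2$ to arrive at $(\te'_1,\te'_2)$ with $\te'=1$ (case (a), $P=2$), whereas the paper shifts only the smaller exponent $\te_1$ up to $1$, landing at $(1,\te_2)$ in case (b) with $P=2+\te_2$; both reductions are equally valid.
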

\begin{proof}
Let us assume without loss of generality that $\te_1 \leq 1$. Let $u\in Lip(\Om)$ with $\F_{1+\te}(u)=0$. If $\te\geq1$ then $P(\te_1,\te_2)= 1+\te$ and by Theorem~\ref{theo:zero}, we have $u\in \U(\Om)$. If $\te<1$, by Lipschitz continuity of $u$, we have,  for $x\in \Om$ and $z\in\R^n\bks\{0\}$ such that $x+z\in \Om$,
\begin{align*}
\dfrac{|Du(x,z_1)|\,|Du(x,z_2)|^{\te_2}}{|z|^{2+\te_2}} &\leq \|\nb_1 u\|_{\oo}^{1-\te_1} |z_1|^{1-\te_1}\dfrac{|Du(x,z_1)|^{\te_1}\,|Du(x,z_2)|^{\te_2}}{|z|^{2+\te_2}}\\
&\leq  \|\nb_1 u\|_{\oo}^{1-\te_1}\dfrac{|Du(x,z_1)|^{\te_1}\,|Du(x,z_2)|^{\te_2}}{|z|^{1+\te}}.
\end{align*}
This yields $\F_{2+\te_2}^{1,\te_2}(u)\leq  \|\nb_1 u\|_{\oo}^{1-\te_1} \F_{1+\te}^{\te_1,\te_2}(u)=0$. We notice that $P(1,\te_2)=2+\te_2$, so that applying Theorem~\ref{theo:zero} with $\te'_1=1$,  $\te'_2=\te_2$, we get again $u\in \U(\Om)$.
\end{proof}

\section{Quantitative control of the distance to $\U(\Om)$ in terms of $\F$}\label{S3}
 The aim of this section is to give a quantitative version of Theorem~\ref{theo:zero} by proving that for $u\in L^\oo(\Om)$, $\F( u)$ controls the distance to $\U(\Om)$ in a strong sense. 
In order to obtain such a strong control, we use the fact that $\mu=\nabla_1 \nabla_2 u$ is a measure and thus restrict ourselves to the case $(a)$ of~\eqref{Pintro}, i.e. $\te_1+\te_2\le 1$.  We start by investigating 
the two dimensional case where the proof is simpler and the result stronger. 
%\textcolor{red}{check compatibility of statement/proof with the $SBV_\te$ case below}
\begin{theorem}[Theorem~\ref{BVestim1dintro}]\label{BVestim1d}
Assume that $n_1=n_2=1$ and $\te=\te_1+\te_2\le 1$. Then, for every $u\in L^\oo(\Om)$ with $\|u\|_\oo\leq 1$ and $\F( u)<\oo$, there exists $\bar u\in \U(\Om)$  such that $u-\bar u\in BV(\Om)\cap L^\oo(\Om)$ with the estimate
\be\label{quantrig2d}
\|u-\bar u\|_\oo+|\nb[u-\bar u]|(\Om)\les \F( u)+ \F( u)^{\frac{1}{2}}.
\ee
% \begin{multline}\label{quantrig2d}
% \|u-\bar u\|_\oo+|\nb[u-\bar u]|(\Om)\les \lt( \|u\|_\oo^{1-\te}\F( u)+\|u\|_\oo^{\te_1+(1-\te)(1+\frac{1}{2}(\te_2-\te_1))}\F( u)^{\te_2+\frac{1}{2}(1-\te)}\rt.\\
% \lt.+\|u\|_\oo^{\te_2+(1-\te)(1+\frac{1}{2}(\te_1-\te_2))}\F( u)^{\te_1+\frac{1}{2}(1-\te)}\rt).
% \end{multline}
\end{theorem}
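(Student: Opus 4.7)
The plan follows the scheme sketched in the introduction: decompose $u = u_1(x_1) + u_2(x_2) + w(x)$ with controlled remainder $w$, and then use the energy to force one of $u_1, u_2$ to be close to a constant. Since $\te \le 1$ and $\|u\|_\oo \le 1$, Proposition~\ref{propquant}, case~(a), gives $\lvert\langle \mu[u], \vhi\rangle\rvert \les \F(u)\|\vhi\|_\oo$, so $\mu := \pt_1\pt_2 u$ is a finite Radon measure with $|\mu|(\Om) \les \F(u)$. Writing $\Om_l = (a_l, b_l)$, define $w(x_1, x_2) := \mu\bigl((a_1, x_1)\times(a_2, x_2)\bigr)$; then $\pt_1\pt_2 w = \mu$ and, crucially using $n_1 = n_2 = 1$, $\|w\|_\oo + |\nb w|(\Om) \les |\mu|(\Om) \les \F(u)$. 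Setting $v := u - w$ gives $\pt_1\pt_2 v = 0$, and by connectedness of $\Om_1, \Om_2$ there exist $u_l \in L^\oo(\Om_l)$ with $v(x) = u_1(x_1) + u_2(x_2)$.

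\textbf{Step 2 (Partial decoupling).} Feeding $u = u_1 + u_2 + w$ into $\E_{\eps, 2}(u)$ and using $Du(x, z_l) = Du_l(x_l, z_l) + Dw(x, z_l)$ together with the subadditivity $|a + b|^{\te_l} \le |a|^{\te_l} + |b|^{\te_l}$ (valid since $\te_l \le 1$) yields
\begin{equation*}
 |Du_1(x_1, z_1)|^{\te_1}\,|Du_2(x_2, z_2)|^{\te_2} \le \bigl(|Du(x, z_1)|^{\te_1} + |Dw(x, z_1)|^{\te_1}\bigr)\bigl(|Du(x, z_2)|^{\te_2} + |Dw(x, z_2)|^{\te_2}\bigr).
\end{equation*}
Integrating against $\rho_\eps(z)/|z|^2$ over $\Om^\eps \times \R^2$, the pure-$u$ term is $\E_{\eps,2}(u)$, while the three remaining ``error'' terms involving $|Dw|^{\te_l}$ are controlled by combining $\|u\|_\oo \le 1$, the $L^\oo$ and $BV$ bounds on $w$ from Step~1, the explicit representation of $w$ as an antiderivative of $\mu$, and H\"older/Young manipulations. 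Passing to a subsequence $\eps_k \dw 0$ that realizes $\F(u)$ and exploiting that $u_l$ depends only on $x_l$, the left-hand side factorizes over $\Om_1 \times \Om_2$:
\begin{equation*}
 \int \rho_{\eps_k}(z)\,\frac{A_k^{(1)}(z_1)\, A_k^{(2)}(z_2)}{|z|^2}\, dz\ \les\ \F(u),\qquad A_k^{(l)}(\zeta) := \int_{\Om_l^{\eps_k}}|Du_l(x_l, \zeta)|^{\te_l}\, dx_l.
\end{equation*}

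\textbf{Step 3 (Extraction and BBM).} Averaging over directions in $z$ as in Lemma~\ref{lem:goodzk} yields $r_k \le \eps_k \dw 0$ and unit vectors $\sigma_l \in X_l$ with $A_k^{(1)}(r_k\sigma_1)\cdot A_k^{(2)}(r_k\sigma_2)/r_k^2 \les \F(u)$. By AM--GM, for at least one $l$ (say $l = 1$ after relabelling) we have $A_k^{(1)}(r_k\sigma_1)/r_k \les \F(u)^{1/2}$ along a subsequence. Since $\|u_1\|_\oo$ is bounded and $\te_1 \le 1$, the inequality $|Du_1| \les |Du_1|^{\te_1}$ (valid on the bounded range) gives $\int_{\Om_1^{\eps_k}}|Du_1(x_1, r_k\sigma_1)|/r_k\, dx_1 \les \F(u)^{1/2}$. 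A Brezis--Bourgain--Mironescu-type lower bound then yields $|\pt u_1|(\Om_1) \les \F(u)^{1/2}$, and the 1D embedding $BV \hookrightarrow L^\oo$ provides $c_1 \in \R$ with $\|u_1 - c_1\|_{L^\oo(\Om_1)} \les \F(u)^{1/2}$. Setting $\bar u(x) := u_2(x_2) + c_1 \in \U(\Om)$ gives $u - \bar u = (u_1 - c_1) + w$ and~\eqref{quantrig2d} follows by adding the estimates from Steps~1 and~3.

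The hardest point will be the error analysis in Step~2: the cross terms $\iint \rho_\eps(z) |Du(x, z_1)|^{\te_1}|Dw(x, z_2)|^{\te_2}/|z|^2$ (and its symmetric companion) are not obviously absorbable, and their control requires simultaneously the $L^\oo$ bound on $w$ (only available in two dimensions via the antiderivative above), the $BV$ bound on $w$, and the explicit representation of $w$ as an antiderivative of the Radon measure $\mu$.
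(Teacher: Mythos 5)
Your Steps 1 and 3 match the paper and are correct in outline: the decomposition $u=u_1+u_2+w$ with $w$ the two-dimensional antiderivative of $\mu=\partial_1\partial_2 u$, the AM--GM to pick out a single index $l$, the passage from $|Du_l|^{\te_l}$ to $|Du_l|$ via $\|u_l\|_\oo\les 1$, the BBM-type lower bound giving $u_l\in BV(\Om_l)$, and the 1D embedding into $L^\oo$. But Step~2 has a genuine gap, one which you yourself flag at the end: after expanding
$|Du_1(x_1,z_1)|^{\te_1}|Du_2(x_2,z_2)|^{\te_2}\le\bigl(|Du(x,z_1)|^{\te_1}+|Dw(x,z_1)|^{\te_1}\bigr)\bigl(|Du(x,z_2)|^{\te_2}+|Dw(x,z_2)|^{\te_2}\bigr)$
you must control the term $\E_\eps(w)$ and the two mixed terms, and you offer only ``H\"older/Young manipulations'' as a promissory note. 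This is precisely the hard part: there is no ready estimate bounding $\E_{\eps,2}(w)$ by $\|w\|_\oo+|\nabla w|(\Om)$ (recall that for the corner function $\F_2(w)$ is strictly positive, and a naive bound of the form $\int\rho_\eps(z)|z_l|\,|z|^{-2}dz$ produces a factor $\eps^{-1}$), and the cross terms are even less tractable. Until those three integrals are actually estimated by $\F(u)+\F(u)^{1/2}$, the proof does not close.

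The paper sidesteps the issue with a different decoupling device. Instead of bounding $|Du_l|$ from above by $|Du|+|Dw|$, it bounds $|Du|$ from \emph{below}: using $|Dw(x,z_1)|\le|\mu|((x_1,x_1+z_1]\times(0,\ell_2))$, it sets
\[
\psi_1(x_1,z_1):=\bigl[\,|Du_1(x_1,z_1)|-|\mu|\bigl((x_1,x_1+z_1]\times(0,\ell_2)\bigr)\,\bigr]_+\ \le\ |Du(x,z_1)|,
\]
and symmetrically $\psi_2$. Crucially $\psi_l$ depends only on $(x_l,z_l)$, and $\psi_1^{\te_1}\psi_2^{\te_2}\le|Du(x,z_1)|^{\te_1}|Du(x,z_2)|^{\te_2}$ pointwise, so plugging the $\psi_l$ into the integrand of $\E_\eps(u)$ yields a factorized quantity bounded by $\F(u)$ \emph{with no error terms at all}. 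The required $\int|Du_1|$ is then recovered from $\int\psi_1^{\te_1}$ at the very end via Fubini and $\sup\psi_1\les1$, which produces the $\F(u)+\F(u)^{1/2}$ on the right-hand side. If you want to repair your Step~2, you should replace the additive upper bound by this subtractive lower bound (or show explicitly how to absorb the three error integrals, which I doubt can be done by elementary Young-type estimates alone).
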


\begin{proof} 
To set notation, we assume that $\Om=(0,\ell_1)\times(0,\ell_2)$, so that $(0,0)$ is the bottom left corner of $\Om$. By Proposition~\ref{propquant}, $\mu:=\partial_1\partial_2 u$ is a finite Radon measure with 
$|\mu|(\Om)\les \F( u)$. For $x\in\Om$, we set $w(x):=\mu((0,x_1]\times(0,x_2])$ and notice that 
\be
\label{bornew}
\|w\|_\oo + |\nb w|(\Om)\, \le 3 |\mu|(\Om)\,\les\, \F( u).
\ee
Since $\partial_1\partial_2 (u-w)=0$, arguing as in the proof of Theorem~\ref{theo:zero}, we find two functions $u_l\in L^\oo((0,\ell_l))$, $l\in\{1,2\}$ such that  
\[
u(x)=u_1(x_1)+u_2(x_2)+w(x) \qquad\text{for }x\in \Om.
\]
Thanks to~\eqref{bornew},  we only need to control $u_1$ or $u_2$.  
Let $x_1$ and $z_1>0$ be such that $0<x_1<x_1+z_1<\ell_1$, we have for $x_2\in (0,\ell_2)$,
\[
Du(x,z_1)=Du_1(x_1,z_1) + D w(x,z_1)=Du_1(x_1,z_1) +\mu((x_1,x_1+z_1]\times (0,x_2]).
\]
In particular, for $x_2\in(0,\ell_2)$,
\be\label{g1}
|Du(x,z_1)|\, \geq\, \Big[|D u_1(x_1,z_1)|-|\mu|((x_1,x_1+z_1]\times (0,\ell_2))\Big]_+\,=:\,\psi_1(x_1,z_1).
\ee
Similarly, for $x_1\in(0,\ell_1)$ and $x_2$, $z_2>0$ such that $0<x_2<x_2+z_2<\ell_2$,
\be\label{g2}
|Du(x,z_2)|\, \geq\, \Big[|D u_2(x_2,z_2)|-|\mu|((0,\ell_1)\times(x_2,x_2+z_2]\Big]_+\,=:\,\psi_2(x_2,z_2).
\ee
Notice for later use that for $l\in\{1,2\}$,
\be\label{Linftygi}
|\psi_l(x_l,z_l)|\leq\ |D u(x,z_l)|\leq 2 \|u\|_\oo\le 2, \quad \text{for every }x,z\in X\text{ with }x,x+z\in\Om. 
\ee
Plugging inequalities~\eqref{g1} and~\eqref{g2} in the definition~\eqref{Eeps} of $\E_\eps(u)$, we get, 
\[
 \liminf_{\eps\dw 0} \int_{\R^2}\rho_\eps(z) \int_{(\eps,\ell_1-\eps)\times(\eps,\ell_2-\eps)}\dfrac{[\psi_1(x_1,z_1)]^{\te_1}\, [\psi_2(x,z_2)]^{\te_2}}{|z|^2}\,  \,dx \,d z \,\leq\, \F( u).
\]
Arguing as in the proof of Lemma~\ref{lem:goodzk},  we may select $\sigma_1+\sigma_2=\sigma\in \partial B_1$,  with $1/2\le |\sigma_l|\le\sqrt3/2 $ for $l=1, 2$ and two sequences  sequence $(r_k)$, $(\eps_k)$ with $0<r_k\le \eps_k\dw0$  such that letting $z^k:=r_k\sigma$, 
\[
\lim_{k\up \oo} \lt(\int_{\eps_k}^{\ell_1-\eps_k} \dfrac{[\psi_1(x_1,z_1^k)]^{\te_1}}{r_k} \,dx_1\rt) \lt(\int_{\eps_k}^{\ell_2-\eps_k} \dfrac{[\psi_2(x_2,z_2^k)]^{\te_2}}{r_k} \,dx_2\rt) \, \les \, \F( u).
\]
Extracting a further subsequence, there exists $l\in\{1,2\}$ such that 
\be\label{borneintgi}
\limsup_{k\up \oo} \int_{\eps_k}^{\ell_l-\eps_k} \dfrac{\lt[\psi_l(x_l,z_l^k)\rt]^{\te_l}}{r_k} \,dx_l\, \les \, \F( u)^{\frac{1}{2}}.
\ee
Without loss of generality, we assume  $l=1$. Using the definition of $\psi_1$ and Fubini, we write
\begin{align*}
\int_{\eps_k}^{\ell_1-\eps_k} |Du_1(x_1,z_1^k)| \,dx_1\,
&\le \, \int_{\eps_k}^{\ell_1-\eps_k} |\mu|((x_1,x_1+z_1^k]\times (0,\ell_2)) \,dx_1 +\int_{\eps_k}^{\ell_1-\eps_k} \psi_1(x_1,z_1^k) \,dx_1\\
&\le\, z_1^k\,|\mu|(\Om) + \lt(\sup_{(\eps_k,\ell_1-\eps_k)} \psi_1\lt(\cdot,z_1^k\rt)\rt)^{1-\te_1}\int_{\eps_k}^{\ell_1-\eps_k}\lt[\psi_1\lt(x_1,z_1^k\rt)\rt]^{\te_1} \,dx_1\\
&\stackrel{\eqref{Linftygi}}{\les}\, r_k\, \F( u) + \int_{\eps_k}^{\ell_1-\eps_k}\lt[\psi_1\lt(x_1,z_1^k\rt)\rt]^{\te_1} \,dx_1.
\end{align*}
Dividing by $r_k$ and letting $k\up \oo$, we deduce from~\eqref{borneintgi},% and the fact that  $\te=1$, 
\[%\be\label{conclusion:BV2d}
\limsup_{k\up \oo} \int_{\eps_k}^{\ell_1-\eps_k}  \dfrac{|Du_1(x_1,z_1^k)|}{r_k} \,dx_1\, 
\les\,  \F( u) +  \F( u)^{\frac{1}{2}}.
\]%\ee
We conclude that $u_1\in BV(0,\ell_1)$ with $|\partial_1u_1|(0,\ell_1)\les \F( u) + \F( u)^{\frac{1}{2}}$.  
This implies $\osc(u_1)\les  \,\F( u) +  \F( u)^{\frac{1}{2}}$, which together with~\eqref{bornew} concludes the proof of the theorem.
\end{proof}
%\begin{remark}
%In the case $\te<1$, we will prove later on that ~\eqref{quantrig2d} can be improved to a control of $u-\bar u$ in the stronger $SBV_\te(\Omega)$ norm (see Corollary~\ref{cor:te<1}).
%\end{remark}

We now turn to the higher dimensional case $n> 2$. In order to obtain the analog of~\eqref{quantrig2d}, we must define the higher dimensional counterpart of $w$. 
As will be clear from the proofs, the main requirements are that $\nabla_1\nabla_2 w=\mu[u]$ and that for almost every $x=x_1+x_2\in\Om$,
\be\label{average}
 \int_{\Omega_2} w(x_1+ y_2) dy_2 =0 \qquad\text{ and }\qquad \int_{\Omega_1} w(y_1+ x_2) dy_1 =0.
\ee
The unique function which satisfies these conditions is given by the formula:
\be\label{def:w}
 w(x):=u(x)-\mint_{\Om_2} u(x_1+y_2) \, dy_2 -\mint_{\Om_1} u(y_1+x_2) \, dy_1  +\mint_{\Om} u(y) \, dy.
 \ee
 For $x=x_1+x_2\in \Om$, we have the decomposition $u(x)=u_1(x_1)+u_2(x_2)+w(x)$  where $u_1$ and $u_2$ are explicitly given by:
\be\label{def:uj}
u_1(x_1):= \mint_{\Om_2} u(x_1+y_2)\, dy_2 -\dfrac12 \mint_\Om u dy,\qquad u_2(x_2):= \mint_{\Om_1} u(y_1+x_2)\, dy_1  -\dfrac12 \mint_\Om u dy.
\ee
We start by establishing some bounds on $u_1$, $u_2$ and $w$.
\begin{proposition}\label{prop:boundw}
 We assume that $\Om_1$ and $\Om_2$ are bounded extension domains. Let $u\in L^\oo(\Om)$ and $w$, $u_1$, $u_2$  be given by~\eqref{def:w}\eqref{def:uj}. 
 Then, $w\in L^\oo(\Om)$, $u_l\in L^\oo(\Om_l)$ for $l=1,2$ with $u(x)=u_1(x_1)+u_2(x_2)+w(x)$ and we have the estimates
 \be\label{boundwu1u2}
 \|w\|_\oo\le 4\|u\|_\oo,\qquad \|u_l\|_\oo\le (3/2)\|u\|_\oo\quad \text{ for } l=1,2.
 \ee
Moreover, if $\te\le 1$ and $\F( u)<\oo$ then $w\in BV(\Om)$ and denoting $\hat n:=\max(n_1,n_2)\geq 2$,
 %then denoting $\hat{n}:=\max(n_1,n_2)$ we have  $u\in BV(\Om)\cap L^{\hat{n}/(\hat{n}-1)}(\Om)$ with the estimate,
 \be\label{boundw}
 \|w\|_{L^{{\hat n}/({\hat n}-1)}(\Om)} + |\nb w|(\Om) \les \,\|u\|_\oo^{1-\te}\,\F( u).
 \ee
\end{proposition}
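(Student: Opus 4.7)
The $L^\infty$ bounds~\eqref{boundwu1u2} are immediate from the triangle inequality applied to the explicit formulas~\eqref{def:w}~\eqref{def:uj}, and the decomposition $u=u_1+u_2+w$ is a direct algebraic check. The substance of the proposition is the homogeneous estimate~\eqref{boundw}. Since $u_1(x_1)+u_2(x_2)$ is annihilated by the mixed derivative $\nb_1\nb_2$, we have $\nb_1\nb_2 w=\nb_1\nb_2 u=\mu[u]$, and the operator $(I-A_1)(I-A_2)$ that defines $w$ forces the double mean-zero property $A_1 w\equiv A_2 w\equiv 0$, where $A_l f:=\smint_{\Om_l} f\, dy_l$. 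Under $\te\le 1$, Proposition~\ref{propquant} case~(a), combined with the rescaling of Remark~\ref{remarkLinfty} when $\te<1$, yields $|\mu[u]|(\Om)\les \|u\|_\oo^{1-\te}\,\F(u)$. The proof thus reduces to the homogeneous inequality
\[
\|w\|_{L^{\hat n/(\hat n-1)}(\Om)}+|\nb w|(\Om)\les |\nb_1\nb_2 w|(\Om),
\]
valid for any $w\in L^\oo(\Om)$ with $\nb_1\nb_2 w$ a finite Radon measure and $A_1 w=A_2 w=0$.

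To prove this homogeneous estimate I would first mollify $\mu:=\nb_1\nb_2 w$ to smooth densities $\mu_\eps$ with $\|\mu_\eps\|_{L^1(\Om)}\to |\mu|(\Om)$, and let $w_\eps$ be the unique smooth function with $\nb_1\nb_2 w_\eps=\mu_\eps$ and $A_l w_\eps\equiv 0$ ($l=1,2$), obtained concretely by applying $(I-A_1)(I-A_2)$ to any iterated antiderivative of $\mu_\eps$. Assume without loss of generality that $\hat n=n_1\ge n_2$, and set $p_l:=n_l/(n_l-1)$ (with $p_2=\oo$ if $n_2=1$), so that $p_1\le p_2$. Since $\Om_1$ is an extension domain and $w_\eps(\cdot,x_2)$ has zero mean over $\Om_1$ for a.e.\ $x_2$, the scalar Sobolev--Poincar\'e inequality yields
\[
\|w_\eps(\cdot,x_2)\|_{L^{p_1}(\Om_1)}\les \|\nb_1 w_\eps(\cdot,x_2)\|_{L^1(\Om_1)}.
\]
Since $A_2$ and $\nb_1$ commute, $\nb_1 w_\eps(x_1,\cdot)$ has zero mean over $\Om_2$ for a.e.\ $x_1$, and Sobolev--Poincar\'e on $\Om_2$ gives
\[
\|\nb_1 w_\eps(x_1,\cdot)\|_{L^{p_2}(\Om_2)}\les \|\mu_\eps(x_1,\cdot)\|_{L^1(\Om_2)}.
\]

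Raising the first inequality to the $p_1$-th power, integrating in $x_2$, invoking Minkowski's integral inequality (valid because $p_1\ge 1$), and then using the embedding $L^{p_2}(\Om_2)\hookrightarrow L^{p_1}(\Om_2)$ followed by the second bound, I obtain
\[
\|w_\eps\|_{L^{p_1}(\Om)}\,\les\,\int_{\Om_1}\|\nb_1 w_\eps(x_1,\cdot)\|_{L^{p_1}(\Om_2)}\,dx_1\,\les\,\|\mu_\eps\|_{L^1(\Om)}.
\]
A symmetric argument, combined with the embedding $L^{p_l}(\Om_l)\hookrightarrow L^1(\Om_l)$, bounds $\|\nb w_\eps\|_{L^1(\Om)}$ by $\|\mu_\eps\|_{L^1(\Om)}$ as well. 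Passing to the limit $\eps\dw 0$ using lower semicontinuity of the $L^{p_1}$-norm and of the total variation then yields~\eqref{boundw}.

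The main technical obstacle I foresee is the approximation procedure: verifying that the smooth $w_\eps$ associated to $\mu_\eps$ converges to $w$ in a topology compatible with the bounds and that the mean-zero hypotheses are preserved along the limit. It is precisely the product structure exploited by the two successive slicewise Sobolev--Poincar\'e inequalities that yields the sharper exponent $\hat n/(\hat n -1)$, in contrast to the weaker exponent $n/(n-1)$ that would result from a naive application of the global embedding $BV(\Om)\hookrightarrow L^{n/(n-1)}(\Om)$.
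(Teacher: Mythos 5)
Your argument is correct and follows essentially the same route as the paper: both reduce to the homogeneous estimate $\|w\|_{L^{\hat n/(\hat n-1)}}+|\nabla w|(\Omega)\lesssim|\mu|(\Omega)$ for the doubly mean-zero $w$, and both establish it via the same chain of slicewise Poincar\'e--Wirtinger inequalities linked by Minkowski's integral inequality, with $\hat n=n_1\ge n_2$ used to get the stronger exponent. The only cosmetic difference is the smoothing scheme: the paper simply mollifies $u$ (using that $\Omega$ is an extension domain) and then applies the formulas, whereas you mollify $\mu$ directly and reconstruct $w_\eps$ by iterated antiderivative; both are routine density arguments and the apparent ``obstacle'' you flag dissolves once one observes that the projections $A_l$ and the formulas~\eqref{def:w}--\eqref{def:uj} behave continuously under mollification.
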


\begin{proof}
The estimates of~\eqref{boundwu1u2} follow from the definitions~\eqref{def:w},~\eqref{def:uj} and the triangle inequality. 
 
We turn to~\eqref{boundw}. By~\eqref{quanta} and the definition of $w$, $\nb_1\nb_2 w=\mu$ with $|\mu|(\Om)\les \F( u)\|u\|_\oo^{1-\te}$. It is thus enough to prove
 \be
 \label{boundw_bis}
   \|w\|_{L^{{\hat n}/({\hat n}-1)}(\Om)} + |\nb w|(\Om)\, \les\, |\mu|(\Om).
 \ee
By density ($\Om$ is a bounded extension domain) we may assume that  $u\in C^\oo(\ov\Om)$.  { For every $x_1\in \Om_1$, we have $W_{x_1}:=\nb_1 w(x_1+\cdot)\in W^{1,1}(\Om_2)$ with, 
\[
\int_{\Om_1} \|\nb_2 W_{x_1}\|_{L^1(\Om_2)}\, dx_1 =\int_{\Om_1}\int_{\Om_2} |\nb_2 \nb_1 w|  \,=  |\mu|(\Om).
\]
By~\eqref{average}, we have $\smint_{\Om_2}W_{x_1}\,=0$ for every $x_1\in \Om_1$ and by Poincar\'e-Wirtinger inequality in $W^{1,1}(\Om_2)$, this yields
\[
 \int_{\Om} |\nb_1w| \,dx= \int_{\Om_1} \|W_{x_1}\|_{L^1(\Om_2)}\, dx_1\les \int_{\Om_1} \|\nb_2 W_{x_1}\|_{L^1(\Om_2)}\, dx_1= |\mu|(\Om).
\]}
The analog bound on $\nb_2 w$ shows that 
\be\label{boundw_ter}
\|\nb w\|_{L^1(\Om)}\, \les\, |\mu|(\Om).
\ee
We finally establish the $L^{\frac{\hat{n}}{\hat{n}-1}}$ bound. Assume without loss of generality that $\hat{n}=n_1\ge n_2$.
We then have by~\eqref{average} and Poincar\'e-Wirtinger inequality in $W^{1,1}(\Om_1)$ that 
\[
\lt\| w \rt\|_{L^{\frac{n_1}{n_1-1}}(\Om)}\les \lt(\int_{\Om_2}\lt(\int_{\Om_1} |\nb_1 w| \,dx_1\rt)^{\frac{n_1}{n_1-1}} \,dx_2\rt)^{\frac{n_1-1}{n_1}}
=\,\lt\|\int_{\Om_1}|\nb_1 w(x_1+\cdot)|\, dx_1  \rt\|_{L^{\frac{n_1}{n_1-1}}(\Om_2)}.
 \]
 By  Minkowski inequality, this leads to 
\[
\lt\| w \rt\|_{L^{\frac{n_1}{n_1-1}}(\Om)}
\les  \int_{\Om_1} \lt\| \nb_1 w(x_1+\cdot )\rt\|_{L^{\frac{n_1}{n_1-1}}(\Om_2)} \, dx_1.
%\les  \int_{\Om_1} \lt\| \nb_1 w(x_1+\cdot )\rt\|_{L^q(\Om_2)} \, dx_1
 %\les  \int_{\Om_1}\lt(\int_{\Om_2} |\nb_1 w(x)|^{\frac{n_1}{n_1-1}}\, dx_2\rt)^{\frac{n_1-1}{n_1}}\, dx_1
% \,\les \,  \int_{\Om_1}\lt(\int_{\Om_2} |\nb_1 w(x)|^{\frac{n_2}{n_2-1}}\, dx_2\rt)^{\frac{n_2-1}{n_2}}\, dx_1,
\]
%we used the H\"older inequality and have set $q=n_2/(n_2-1)$ if $n_2\geq 2$ and $q=\infty$ if $q=1$ (in both cases $n_2\geq n_1/(n_1-1)$ and $n_2\leq n_1$ in the last inequality. 
We now apply the Poincar\'e-Wirtinger inequality to $\nb_1 w(x_1+\cdot)$ in $W^{1,1}(\Om_2)$ to obtain (since $n_2\leq n_1$),  
\[
\lt\| w \rt\|_{L^{\frac{n_1}{n_1-1}}(\Om)}
 \,\les \,  \int_{\Om_1}\lt\|\nb_2 \nb_1 w(x_1+\cdot)\rt\|_{L^1(\Om_2)}\, dx_1 = |\nb_1\nb_2 u|(\Om)=  |\mu|(\Om).
\]
Together with~\eqref{boundw_ter} this proves~\eqref{boundw_bis}.
\end{proof}
\begin{remark}
 We point out that the Poincar\'e-Wirtinger inequality gives a slightly stronger result than~\eqref{boundw}, namely that $\nb_1 w\in \mathcal{M}(\Om_1,BV(\Om_2))$ (and similarly for $\nb_2w$).
\end{remark}

We turn to the higher dimensional analog of Theorem~\ref{BVestim1d}. 
\begin{proposition}\label{thm:u-baru}
Assume that $\Om_1$ and $\Om_2$ are bounded extension domains and  that $\te\le1$. Let $u\in L^\oo(\Om)$ with $\|u\|_\oo\leq 1$ and $\F( u)<\oo$.  Using the notation $u=u_1+u_2+w$ of Proposition~\ref{prop:boundw}, there exist $l\in\{1,2\}$ and~$c\in\R$ such that,
\be \label{estimbaruj}
\|u_l-c\|_{L^{\frac{n_l}{n_l-1}}(\Om)}\les \,\F(u)^{\frac{1}{2}}.
\ee
\end{proposition}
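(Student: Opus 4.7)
The plan is to adapt the two-dimensional argument of Theorem~\ref{BVestim1d} to higher dimensions, replacing its $L^\oo$ conclusion by a BV bound on one of the $u_l$'s that will translate into the required $L^{n_l/(n_l-1)}$ estimate via Poincar\'e--Sobolev on the bounded extension domain $\Om_l$. Using Remark~\ref{rem:te=1}, I would first reduce to $\te=\te_1+\te_2=1$ so that $P(\te_1,\te_2)=2$, and then invoke Lemma~\ref{lem:goodzk} and Remark~\ref{remark:normalisation} to obtain sequences $0 < r_k \le \eps_k \dw 0$ and, after a linear change of coordinates,
\[
\sum_{i,j} \int_{\Om^{\eps_k}} \frac{|Du(x,r_ke_i)|^{\te_1}|Du(x,r_kf_j)|^{\te_2}}{r_k^2}\, dx \,\les\, \F(u),
\]
using $q(x,z) \ge |Du(x,z_1)|^{\te_1}|Du(x,z_2)|^{\te_2}$.

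The key new step will be a higher-dimensional analog of the slab estimate which was automatic in dimension two. Namely, I would prove that, for the canonical $w$ of~\eqref{def:w} and for every index $i$,
\[
|Dw(x, r_k e_i)| \,\le\, 2\, g_i(x_1^i), \qquad g_i(x_1^i) := \sum_{j'} |\mu_{i,j'}|\bigl((x_1^i, x_1^i+r_k]\times\Om_2\bigr),
\]
where $g_i$ depends only on the $i$-th coordinate of $x_1$ and satisfies $\int g_i(x_1^i)\,dx_1^i \le r_k|\mu|(\Om) \les r_k\F(u)$ by Fubini. To do so I would introduce the auxiliary primitive $\tilde w(x) := u(x) - u(x_1+x_2^0) - u(x_1^0+x_2) + u(x^0)$ for a fixed reference point $x^0\in\Om$, check directly that $|D\tilde w(x,r_ke_i)|\le g_i(x_1^i)$ by integrating $\pt_{x_1^i}\pt_{x_2^j}u=\mu_{i,j}$ along an axis-parallel path in $\Om_2$, and then observe that $w-\tilde w$ splits as $F_1(x_1)+F_2(x_2)$ (both being primitives of $\mu$) with $DF_1(x_1,r_ke_i)$ equal to the $\Om_2$-average of $D\tilde w(x_1+\cdot,r_ke_i)$; this gives the same bound for $|DF_1|$, and the factor $2$ comes from $|Dw|\le|D\tilde w|+|DF_1|$.

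With the slab bound in hand, I would deduce the pointwise lower bound $|Du(x,r_ke_i)|\ge \psi_1^i(x_1,r_k):=[|Du_1(x_1,r_ke_i)|-2g_i(x_1^i)]_+$, and symmetrically $\psi_2^j(x_2,r_k)$. Since $\psi_1^i$ depends only on $x_1$, $\psi_2^j$ only on $x_2$, and $\Om^{\eps_k}=\Om_1^{\eps_k}+\Om_2^{\eps_k}$, Fubini factorizes the energy estimate into
\[
\frac{1}{r_k^2}\Bigl(\sum_i \int_{\Om_1^{\eps_k}} [\psi_1^i]^{\te_1}\,dx_1\Bigr)\Bigl(\sum_j \int_{\Om_2^{\eps_k}} [\psi_2^j]^{\te_2}\,dx_2\Bigr) \,\les\, \F(u),
\]
so that, after extraction, the smaller factor is $\les r_k\F(u)^{1/2}$; calling this the $l=1$ case, using $\psi_1^i\le |Du| \le 2$ together with $\psi_1^i\le 2^{1-\te_1}[\psi_1^i]^{\te_1}$ (valid since $\te_1\le 1$), combined with $|Du_1|\le \psi_1^i+2g_i$ and $\int g_i/r_k\,dx_1\les \F(u)$, I would obtain $\sum_i\int_{\Om_1^{\eps_k}} |Du_1(x_1,r_ke_i)|/r_k\,dx_1\les \F(u)^{1/2}+\F(u)$. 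Passing to the liminf as $k\up\oo$ by lower semicontinuity of translation difference quotients would give $|\nb u_1|(\Om_1)\les \F(u)^{1/2}+\F(u)$, and Poincar\'e--Sobolev on $\Om_1$ would then yield a constant $c$ with $\|u_1-c\|_{L^{n_1/(n_1-1)}(\Om_1)}\les \F(u)^{1/2}$ (absorbing the $\F(u)$ term when $\F(u)\le 1$ and handling the complementary regime trivially via $\|u_1\|_\oo\les 1$); multiplying by $|\Om_2|^{(n_1-1)/n_1}$ produces the $L^{n_1/(n_1-1)}(\Om)$ bound.

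The main obstacle will be the slab estimate of the second paragraph: in dimension two the scalar structure of $\mu$ makes it immediate for the canonical $w$, but in higher dimensions one must introduce and carefully handle the auxiliary cumulative primitive $\tilde w$, reconcile it with the canonical decomposition $u=u_1+u_2+w$, and make rigorous the axis-parallel path integration against a measure-valued $\mu$ (which is handled by a standard mollification argument on $u$).
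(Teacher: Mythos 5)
The slab estimate in your second paragraph --- $|Dw(x,r_k e_i)| \le 2\,g_i(x_1^i)$ with $g_i$ depending only on a single coordinate of $x_1$ and $\int g_i\,dx_1 \les r_k\,|\mu|(\Om)$ --- is false as soon as $\max(n_1,n_2)\ge 2$, and this is precisely the obstruction that the paper singles out when it remarks that ``the failure of the Sobolev embedding $BV(\Om_l)\subset L^\oo(\Om_l)$ makes the situation more complex''. A concrete counterexample: take $n_1=1$, $n_2=2$, $\Om_1=(0,1)$, $\Om_2=B^{X_2}_1$, and $u(x)=x_1\,\phi(x_2)$ with $\phi(x_2)=-\log|x_2|\in W^{1,1}(\Om_2)\sm L^\oo(\Om_2)$. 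Then $\mu=\nb_1\nb_2 u=\nb_2\phi$ is a finite measure, the canonical $w$ of~\eqref{def:w} equals $(x_1-\tfrac12)\bigl(\phi(x_2)-\smint_{\Om_2}\phi\bigr)$, so $Dw(x,r_k e_1)=r_k\bigl(\phi(x_2)-\smint_{\Om_2}\phi\bigr)$ is unbounded in $x_2$, whereas $g_1(x_1)\le r_k\,\|\nb_2\phi\|_{L^1(\Om_2)}$ stays finite. Your auxiliary $\tilde w$ does not save the estimate: its claimed slab bound requires integrating $|\mu|$ along a one-dimensional axis-parallel path inside the $n_2$-dimensional $\Om_2$, and such a path integral is not dominated by $|\mu|(\Om)$; mollification does not repair this, because for smooth $u_k$ the path integral of $|\nb_1\nb_2 u_k|$ may blow up even while $\|\nb_1\nb_2 u_k\|_{L^1}\to|\mu|(\Om)$. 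Consequently the lower bound $|Du(x,r_ke_i)|\ge\psi_1^i(x_1)$ and the factorization built on it both collapse.

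What the paper does instead is to abandon any pointwise slab bound in favour of an $L^1$-in-one-variable bound that holds only on a good portion of each $\Om_l$, and to truncate $u_l$ on the complementary bad set. Concretely, $\psi_l(x_l):=\int_{\Om_{3-l}}|\nb_{3-l}\,w|(x_l+y_{3-l})\,dy_{3-l}$ is shown to lie in $BV(\Om_l)$ with $\|\psi_l\|_{L^1(\Om_l)}+|\nb\psi_l|(\Om_l)\les\F(u)$; the coarea formula then produces levels $\lambda_l\sim\F(u)^{1/2}$ whose superlevel sets $\om_l\subset\Om_l$ have both volume and relative perimeter $\les\F(u)^{1/2}$; on the sublevel sets $\widetilde\Om_l$ one retains a usable sliced bound of the form $\int_{\Om_1}|Dw(x,z_1)|/|z_1|\,dx_1\le\lambda_2$ for $x_2\in\widetilde\Om_2$; the decoupling of the two factors is then carried out through the cut-off sets $A_l^k$ of Step~2; and the resulting $BV$ estimate with bound $\les\F(u)^{1/2}$ is obtained for the \emph{truncated} function $\tilde u_1=u_1\un_{\widetilde\Om_1}$, not for $u_1$ itself, with the contribution of $\om_1$ absorbed via the relative isoperimetric inequality. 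Your remaining scaffolding --- Lemma~\ref{lem:goodzk}, the change of variables of Remark~\ref{remark:normalisation}, Poincar\'e--Sobolev on the extension domain --- is sound, but the coarea truncation is a genuinely new ingredient required in higher dimensions, and it does not emerge from directly generalizing the two-dimensional argument of Theorem~\ref{BVestim1d}.
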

%where $q_j=n_l/(n_l-1)$ if $n_l\ge 2$ and $q_j=\infty$ if $n_l=1$.
% Then, there exist positive  constants  $c$ and $C$ depending only on $n$ and $\Om$ such that if $\|u\|_{\oo}^{1-\te}\F (u)\le c$,  then there exists $\bar u\in G$, such that denoting $\hat n:=\max(n_1,n_2)$, 
%  \be\label{estimbaru}
%   \|u-\bar u\|_{L^{\frac{\hat{n}}{\hat{n}-1}}(\Om)} \le C \lt((\|u\|_{\oo}^{1-\te}\F (u))^{1/2} + \|u\|_{\oo}^{1-\te}\F (u)\rt) \lt(1+\|u\|_\oo\rt).
%  \ee
Combining this estimate,  H\"older inequality $\|u_l-c\|_{L^{\frac{\hat{n}}{\hat{n}-1}}(\Om)}\les \|u_l-c\|_{L^{\frac{n_l}{n_l-1}}(\Om_l)}$ and Proposition~\ref{prop:boundw},  we deduce the following result.
\begin{theorem}[Theorem~\ref{thm:LpestimIntro}]\label{coro:u-baru}
 Let $\Om_1$ and $\Om_2$ be bounded extension domains and    $\te\le1$. Let $u\in L^\oo(\Om)$ with $\|u\|_\oo\leq 1$ and $\F (u)<\oo$. Noting $\hat n:=\max(n_1,n_2)$, 
 there exists $\bar u\in L^\oo(\Om)\cap \U(\Om)$ such that, 
 \begin{equation*}%\label{estimbaru}
  \|u-\bar u\|_{L^{\frac{\hat{n}}{\hat{n}-1}}(\Om)} \les \, \F (u)+\F(u)^{\frac{1}{2}}
   \end{equation*}
% \be\label{estimbaru}
%  \|u-\bar u\|_{L^{\frac{\hat{n}}{\hat{n}-1}}(\Om)} \les \lt((\|u\|_{\oo}^{1-\te}\F (u)^{1/2} + \|u\|_{\oo}^{1-\te}\F (u)\rt) \lt(1+\|u\|_\oo\rt).
% \ee
\end{theorem}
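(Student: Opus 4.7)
The plan is to assemble the theorem directly from Propositions~\ref{prop:boundw} and~\ref{thm:u-baru}. First I would invoke Proposition~\ref{prop:boundw} to obtain the explicit decomposition $u(x)=u_1(x_1)+u_2(x_2)+w(x)$ with $u_l\in L^\oo(\Om_l)$, $w\in L^\oo(\Om)\cap BV(\Om)$, and — using the hypotheses $\|u\|_\oo\leq 1$ and $\te\leq 1$ — the key estimate
\[
 \|w\|_{L^{\hat n/(\hat n-1)}(\Om)}\,\les\,\F(u).
\]
Next, I would apply Proposition~\ref{thm:u-baru} to extract an index $l\in\{1,2\}$ and a constant $c\in\R$ with $\|u_l-c\|_{L^{n_l/(n_l-1)}(\Om)}\les\F(u)^{1/2}$. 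Assume without loss of generality $l=1$, and define $\bar u(x):=u_2(x_2)+c$; this lies in $L^\oo(\Om)\cap\U(\Om)$ by construction, and writes the error as $u-\bar u=(u_1-c)+w$.

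The remaining task is to control $(u_1-c)$ in $L^{\hat n/(\hat n-1)}(\Om)$. Since $\hat n\geq n_1$, the map $n\mapsto n/(n-1)$ being decreasing gives $n_1/(n_1-1)\geq \hat n/(\hat n-1)$ (with the case $n_1=1$ understood as an even stronger $L^\oo$ estimate). Because $\Om$ is bounded, H\"older's inequality therefore yields
\[
\|u_1-c\|_{L^{\hat n/(\hat n-1)}(\Om)}\,\les\,\|u_1-c\|_{L^{n_1/(n_1-1)}(\Om)}\,\les\,\F(u)^{1/2}.
\]
Combining this with the $L^{\hat n/(\hat n-1)}$-bound on $w$ by the triangle inequality delivers
\[
\|u-\bar u\|_{L^{\hat n/(\hat n-1)}(\Om)}\,\les\,\F(u)+\F(u)^{1/2},
\]
which is the desired conclusion.

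I do not anticipate any serious obstacle at the level of Theorem~\ref{coro:u-baru} itself: the proof is essentially a triangle-inequality assembly, plus a scale comparison of $L^p$ norms on a set of finite measure. All the real analytical content sits upstream, namely in the $BV$-type bound of Proposition~\ref{prop:boundw} (which relies on the Poincar\'e--Wirtinger and Minkowski inequalities on the extension domains $\Om_l$) and in the $\F(u)^{1/2}$ control of $u_l-c$ supplied by Proposition~\ref{thm:u-baru}. The only mild care needed in the corollary is distinguishing the degenerate case $n_l=1$ (where $n_l/(n_l-1)=\oo$), and verifying that the bounded extension property of $\Om_l$ is indeed used only through its role in Proposition~\ref{prop:boundw}, not again here.
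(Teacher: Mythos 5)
Your assembly is exactly the paper's argument: invoke Proposition~\ref{prop:boundw} for the decomposition $u=u_1+u_2+w$ and the $L^{\hat n/(\hat n-1)}$ bound on $w$, invoke Proposition~\ref{thm:u-baru} for $\|u_l-c\|_{L^{n_l/(n_l-1)}}\les\F(u)^{1/2}$, pass to $L^{\hat n/(\hat n-1)}(\Om)$ by H\"older on the bounded domain, and set $\bar u=u_{3-l}+c$. This is correct and matches the paper's proof step for step.
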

\begin{proof}[Proof of Proposition~\ref{thm:u-baru}]
Since $\|u\|_\infty\le 1$, by \eqref{boundwu1u2} it is enough to establish \eqref{estimbaruj} under the additional assumption that 
\be\label{addassumpteta}
\F(u)\leq \eta,
\ee
for some $\eta>0$ only depending on $\Om$, $n_1$, $n_2$ and $\rho$.

{\it Step 1.} 
 Let $w$, $u_1$ and $u_2$ be given by~\eqref{def:w} and~\eqref{def:uj} so that $u(x)=u_1(x_1)+u_2(x_2)+w(x)$.   Let us first recall that by \eqref{quanta}, $\nb_1\nb_2 w=\nb_1\nb_2 u=\mu$
 is a measure with $|\nb_1\nb_2 w|(\Om)=|\mu|(\Om)\les \F( u)$. Let $(w_k)$ be a sequence of mollifications of $w$ with 
$ w_k\to w$ in $L^1(\Om)$  and almost everywhere, $ \nb w_k\to \nb w$  weakly star in $\mathcal{M}(\Om)$, $\|\nb w_k\|_{L^1(\Om)}\to|\nb w|(\Om)$
 and $\|\nb_1\nb_2 w_k\|_{L^1(\Om)}\to|\mu|(\Om)$
(which is possible since $\Om_1$ and $\Om_2$ are extension domains). For $k\ge 0$ and $x_1\in\Om_1$, we set 
\[
\psi_1^k(x_1):=\int_{\Om_2}|\nb_2 w_k|(x_1+y_2) dy_2.
\] 
We have $\| \psi_1^k \|_{L^1(\Om_1)}=\| \nb_2 w_k \|_{L^1(\Om)}$ and $\|\nb_1 \psi_1^k\|_{L^1(\Om)} \le \|\nb_1\nb_2 w_k\|_{L^1(\Om)}$. Hence, by~\eqref{boundw}
\[
\limsup_{k\up\oo}\ \| \psi_1^k \|_{L^1(\Om_1)}+ \|\nb_1 \psi_1^k\|_{L^1(\Om_1)} \,\les\, \F( u).
\]
Therefore, $\psi_1^k$ is bounded in $W^{1,1}(\Om_1)$ and  up to extraction, $\psi_1^k$ converges in $L^1(\Om_1)$ to some function $\psi_1\in BV(\Om_1)$ with 
\be\label{boundpsi1}
\|\psi_1\|_{L^1(\Om_1)} + |\nb_1 \psi_1|(\Om_1)\les \F( u).
\ee
Let then for $\lambda>0$,
\[
  \Om_{1,\lambda}:=\{x_1 \in \Om_1 :  \psi_1(x_1)\le \lambda\}.%=\{x_1 \in \Om_1 :  |\nb_2 w|(x_1,\cdot)({\Om_2})\le \lambda\}.
 \]
In the sequel we use that for a.e. $x_1\in \Om_{1,\lambda}$ and every $z_2\in X_2$ such that $|(\Om_2-z_2)\cap\Om_2|>0$, 
\begin{align}
 \int_{(\Om_2-z_2)\cap \Om_2} \frac{|D w|(x,z_2)}{|z_2|} \,dx_2&=\lim_{k\up \oo} \int_{(\Om_2-z_2)\cap \Om_2} \frac{|D w_k|(x,z_2)}{|z_2|} \,dx_2\nonumber\\
 &\le\limsup_{k\up \oo} \int_{(\Om_2-z_2)\cap \Om_2}\int_0^1 \frac{|\nabla w_k(x+t z_2)\cdot z_2|}{|z_2|} dt \,dx_2\nonumber \\ 
& \le \lim_{k\up \oo} \int_{\Om_2}|\nb_2 w_k(x)|  \,dx_2  =\psi_1(x_1)\le \lambda.\label{DvsGrad}
\end{align}
Now,  by co-area formula \cite[Theorem 3.40]{Am_Fu_Pal}  for almost every $\lambda>0$ the set $\Om_{1,\lambda}$  is of finite perimeter in $\Om_1$ and denoting by $\partial \Om_{1,\lambda}$ the measure-theoretic boundary of $\Om_{1,\lambda}$,
\[
\int_0^\oo \h^{n_1-1}(\partial \Om_{1,\lambda} \cap \Om_1) \, d\lambda\,=|\nb_1 \psi_1|(\Om_1)\ \stackrel{\eqref{boundpsi1}}\les\ \F( u). 
\]
We thus pick $\lambda_1>0$ such that
\be
\label{lbdprime}
\frac{\F(u)^{\frac{1}{2}}}{4}\leq \lambda_1\leq \frac{\F(u)^{\frac{1}{2}}}{2}\quad \text{and}\quad  \h^{n_1-1}(\partial \Om_{1,\lambda} \cap \Om_1)\les \F(u)^{\frac{1}{2}}.
\ee
We set $\widetilde\Om_1:=\Om_{1,{\lambda_1}}$ and  define $\om_1:= \Om_1\ \sm \widetilde\Om_1$ (so that $ \h^{n_1-1}(\partial \Om_{1,\lambda} \cap \Om_1)= \h^{n_1-1}(\partial \om_1 \cap \Om_1)$). We notice for later use that
\[
 \lambda_1 |\om_1|\, \leq \int_{\om_1} |\psi_1| \,dx_1\leq \|\psi_1\|_{L^1(\Om_1)}\ \stackrel{\eqref{boundpsi1}}\les\ \F( u),
 \]
Therefore,  since $\lambda_1\geq \F(u)^{\frac{1}{2}}/4$, { choosing $\eta$ small enough in~\eqref{addassumpteta},} we have, 
\be
\label{lbdprime2}
|\om_1|\les \F(u)^{\frac{1}{2}} \leq {|\Om_1|}/{2}.
\ee
We define $\psi_2$, $\lambda_2$, $\widetilde{\Om}_2$ and $\om_2$ similarly.

{\it Step 2.} Let $(r_k)$, $(\eps_k)$ with $0<r_k\leq \eps_k\dw 0$ be  given by Lemma~\ref{lem:goodzk} (recall Remark~\ref{remark:normalisation}) and such that for every $1\le i\le n_1$ and 
$1\le j\le n_2$,
\be\label{hyp:control}
 \limsup_{k\up \oo} \int_{\Omega^{\eps_k}} \frac{|Du(x,r_k e_i)|^{\te_1}|Du(x,r_k f_j)|^{\te_2}}{r_k^2} \,  \,dx\les \F( u).
\ee
Fix for the moment $k\ge1$, $1\le i\le n_1$ and $1\le j \le n_2$ and let $z^k:=r_k(e_i+f_j)$. We define 
\[
 A_1^k:=\lt\{x \in\Omega^{\eps_k} :  |D u|(x,z^k_1)\ge \frac{1}{2} |D u_1|(x_1,z^k_1)\rt\}
\]
and similarly for $A_2^k$. Now we write
\begin{multline}\label{prdcOmlbd}
\lt( \int_{\widetilde{\Om}_1\cap \Omega^{\eps_k}} \frac{|D u_1|(x_1,z^k_1)}{r_k} \,dx_1\rt)\lt(\int_{\widetilde{\Om}_2\cap \Omega^{\eps_k}} \frac{|D u_2|(x_2,z^k_2)}{r_k} \,dx_2\rt) \\
=\int_{(\widetilde{\Om}_1 \times \widetilde{\Om}_2)\cap \Omega^{\eps_k}} \frac{|D u_1|(x_1,z^k_1)|D u_2|(x_2,z^k_2)}{r_k^2}  \,dx,
\end{multline}
and we split the domain of integration as 
\[
(\widetilde{\Om}_1 \times \widetilde{\Om}_2)\cap \Omega^{\eps_k}\subset \lt[  A_1^k\cap  A_2^k\rt] 
\cup \lt[  \lt[(\Om_1\times \widetilde{\Om}_2)\cap \Omega^{\eps_k} \rt]\bks   A_1^k\rt]
 \cup \lt[ \lt[(\widetilde{\Om}_1\times \Om_2)\cap \Omega^{\eps_k}\rt] \bks  A_2^k\rt].
\] 
In the first subdomain, we use the inequalities $|D u_1|(x_1,z^k_1)\leq 2 |Du|(x,z^k_1)$ and $|D u_2|(x_2,z^k_2)\leq 2 |Du|(x,z^k_2)$ valid by definition for $x\in  A_1^k\cap  A_2^k$. Then, since $\|u\|_\oo\leq1$, we get
\begin{multline}
\label{GprimeGseconde}
\limsup_{k\up \oo} \int_{A_1^k\cap  A_2^k} \frac{|D u_1|(x_1,z^k_1)|D u_2|(x_2,z^k_2)}{r_k^2} \,dx\,\\
\leq \, 4 \limsup_{k\up \oo} \int_{\Omega^{\eps_k}} \frac{|D u|^{\te_1}(x,z_1^k)|D u|^{\te_2}(x,z_2^k)}{r_k^2} \,dx
\stackrel{\eqref{hyp:control}}{\les}  \F( u).
\end{multline}
In the second subdomain, we use the triangle inequality  to get  for $x\not\in A_1^k$,  
\[
 |Du_1(x_1,z_1^k)|  = |D u(x,z_1^k)-D w(x,z_1^k) | \ \stackrel{x\not\in A_1^k}\le\  \dfrac{|D u_1(x_1,z_1^k)|}2 + |D w(x,z_1^k)|,
\] 
so that for $x\not\in A_1^k$, there holds $ |Du_1(x_1,z_1^k)| \leq 2 |Dw|(x,z_1^k)$. This leads to
\begin{multline*}
\int_{((\Om_1\times \widetilde{\Om}_2)\cap \Omega^{\eps_k})\bks   A_1^k}\frac{|D u_1|(x_1,z^k_1)|D u_2|(x_2,z^k_2)}{r_k^2} \,dx\\
\leq\ 2 \int_{\widetilde{\Om}_2\cap \Om_2^{\eps_k}} \frac{|D u_2|(x_2,z^k_2)}{r_k}\lt( \int_{\Om_1^{\eps_k}}\frac{|Dw|(x,z_1^k)}{r_k} \,dx_1\rt)\,d x_2.
\end{multline*}
By definition of $\widetilde{\Om}_2$ and~\eqref{DvsGrad}, the inner integral is bounded by $\lambda_2\stackrel{\eqref{lbdprime}}{\leq} \F(u)^{\frac{1}{2}}/2$. Hence,
\begin{multline}
\label{OmscdmoinsGprime}
\limsup_{k\up \oo}\int_{((\Om_1\times \widetilde{\Om}_2)\cap \Omega^{\eps_k})\bks   A_1^k}\frac{|D u_1|(x_1,z_1^k)|D u_2|(x_2,z_2^k)}{r_k^2} \,dx\\
\leq  \F(u)^{\frac{1}{2}}\, \limsup_{k\up \oo}\int_{\widetilde{\Om}_2\cap \Om_2^{\eps_k}} \frac{|D u_2|(x_2,z_2^k)}{r_k} \,dx_2	.
\end{multline}
The third term is bounded similarly and we deduce from~\eqref{prdcOmlbd},\eqref{GprimeGseconde} and~\eqref{OmscdmoinsGprime},
\begin{multline*}
\lt[ \limsup_{k\up \oo}\int_{\widetilde{\Om}_1\cap \Om_1^{\eps_k}} \frac{|D u_1|(x_1,z_1^k)}{r_k} \,dx_1 -\F(u)^{\frac{1}{2}}\rt]_+
\\
\lt[\limsup_{k\up \oo}\int_{\widetilde{\Om}_2\cap \Om_2^{\eps_k}} \frac{|D u_2|(x_2,z_2^k)}{r_k} \,dx_2-\F(u)^{\frac{1}{2}}\rt]_+
\ \les  \F( u).
\end{multline*}
Recalling the notation $z_1^k=r_ke_i$, $z_2^k=r_kf_j$ and summing over $i\in\{1,\cdots,n_1\}$ and $j\in\{1,\cdots, n_2\}$, we see that up to extraction, we have 
\begin{align}\label{estimsumi}
&\text{For  }i\in\{1,\cdots,n_1\},&&  \limsup_{k\up \oo}\int_{\widetilde{\Om}_1\cap \Om_1^{\eps_k}} \frac{|D u_1|(x_1,r_k e_i)}{r_k} \,dx_1 
\,\les    \F(u)^{\frac{1}{2}},
\\\nonumber
&\text{or for }j\in\{1,\cdots,n_2\}, & & \limsup_{k\up \oo}\int_{\widetilde{\Om}_2\cap \Om_2^{\eps_k}} \frac{|D u_2|(x_2,r_k f_j)}{r_k} \,dx_2
\,\les   \F(u)^{\frac{1}{2}} .
\end{align}

{\it Step 3.} Let us assume without  loss of generality that the first possibility occurs and let us define the function 
\[
 \tilde u_1:= u_1 \quad \text{in } \widetilde{\Om}_1 \qquad \tilde u_1:=0 \quad \text{in } \om_1.
\]
We fix again $i\in \{1,\cdots,n_1\}$ and use the short-hand notation $z_1^k:=r_k e_i$. we may now estimate,  $ \int_{\Om_1^{\eps_k}} |D\tilde u_1|(x_1,z_1^k)/r_k \,dx_1$. For this we use $|D \tilde u_1(x_1,z_1^k)|=|D  u_1(x_1,z_1^k)|$ when $x_1,x_1+z_1^k\in \widetilde\Om_1$, $D \tilde u_1(x_1,z_1^k)=0$ when $x_1,x_1+z_1^k\in \om_1$ and 
$|D \tilde u_1(x_1,z_1^k)|\le \|u\|_\oo\leq 1$ when $x_1$ or $x_1+z_1^k$  belongs to $\tilde \Om_1$ but not both. In the later case we thus have $|D \tilde u_1(x_1,z_1^k)|\le  |D\un_{\om_1}(x_1,z_1^k)|$. This leads to
\begin{multline*}
 \liminf_{k\up \oo} \int_{\Om_1^{\eps_k}} \frac{|D\tilde u_1|(x_1,z_1^k)}{r_k} \,dx_1\\ 
\le \limsup_{k\up \oo} \int_{\widetilde{\Om}_1\cap\Om_1^{\eps_k}} \frac{|D u_1|(x_1,z_1^k)}{r_k} \,dx_1 
+  \limsup_{k\up \oo}\int_{\Om_1^{\eps_k}} \frac{|D\un_{\om_1}(x_1,z_1^k)|}{r_k} \,dx_1
\\
 \stackrel{\eqref{estimsumi}}{\les}  \F(u)^{\frac{1}{2}} + |e_i\cdot \nb_1 \un_{\om_1}|(\Om_1) .
 \end{multline*}
By~\eqref{lbdprime}, we have $|\nb_1 \un_{\om_1}|(\Om_1)= \h^{n_1-1}(\partial \om_1 \cap \Om_1)\les\F(u)^{\frac{1}{2}}$ and we obtain that $e_i\cdot \nb \widetilde u_1$ is a measure in $\Om_1$ which satisfies $|e_i\cdot \nb \widetilde u_1|(\Om_1)\les \F(u)^{\frac{1}{2}}$. Summing over $i$ we get
$\tilde u_1\in BV(\Om_1)$ with 
\be\label{boundtildeu1}
|\nabla \widetilde u_1|(\Om_1)\les\F(u)^{\frac{1}{2}}.
\ee
Let us note $c$ the mean value of $\tilde u_1$ (in particular $|c|\leq\|u_1\|_\oo$). Using the Sobolev injection of $BV(\Om_1)$ into $L^{n_1/(n_1-1)}(\Om_1)$, we compute
\begin{align*}
 \| u_1-c\|_{L^{\frac{n_1}{n_1-1}}(\Om_1)}&\leq  \| \tilde u_1-c\|_{L^{\frac{n_1}{n_1-1}}(\Om_1)} + \|u_1-c\|_{L^{\frac{n_1}{n_1-1}}(\om_1)} \\
 &\les |\nb_1 \widetilde u_1|(\Om_1) +  2\|u_1\|_\oo |\om_1|^{\frac{n_1-1}{n_1}} \stackrel{\eqref{boundtildeu1}\&\eqref{boundwu1u2}}\les  \F(u)^{\frac{1}{2}} + |\om_1|^{\frac{n_1-1}{n_1}}.
 \end{align*}
 By~\eqref{lbdprime2}, we have $|\om_1| \le |\Om_1|/2$ and thus by the relative isoperimetric inequality  in $\Om_1$ (see for instance~\cite[Lemma 4.5.2]{Federer}),  
 \[
 |\om_1|^{\frac{n_1-1}{n_1}}\les  \h^{n_1-1}(\partial \om_1 \cap \Om_1) \stackrel{\eqref{lbdprime}}{\les} \F(u)^{\frac{1}{2}}
 \]
so that our final estimate is  
\begin{equation*}%\label{finalu1}
\| u_1-c\|_{L^{\frac{n_1}{n_1-1}}(\Om_1)}\, \les   \F(u)^{\frac{1}{2}}. 
\end{equation*}
This concludes the proof of the proposition.
\end{proof}

\subsection*{Acknowledgments}
We thank V. Millot for very stimulating discussions during the early stages of this research. M. Goldman is partially supported by the ANR SHAPO. B. Merlet is partially supported by the INRIA team RAPSODI and the Labex CEMPI (ANR-11-LABX-0007-01).

%\cite{White1999}\cite{Fleming66}\cite{Federer}\cite{Brezis2002}

\bibliographystyle{alpha}
\bibliography{BibStripes}
\end{document}